\documentclass[10.5pt, a4paper]{article}
\usepackage{fullpage}
\usepackage{listings}
\lstset{breaklines}
\lstset{extendedchars=false}
\usepackage{latexsym, amsfonts, amssymb, amsmath, amsthm, mathrsfs, mathtools, setspace, graphics, graphicx, bbm, float, bigints}
\usepackage{enumerate}
\usepackage{caption}
\usepackage{indentfirst}
\usepackage{framed}
\usepackage{yhmath}
\usepackage[nodayofweek,level]{datetime}
\usepackage{xcolor}
\usepackage{appendix}


\RequirePackage{tikz}
\usetikzlibrary{calc,trees,positioning,arrows,chains,shapes.geometric,%
    decorations.pathreplacing,decorations.pathmorphing,shapes,%
    matrix,shapes.symbols}
\allowdisplaybreaks    

\linespread{1.05}
\setlength{\parskip}{0.7em}
\usepackage{geometry}
\geometry{left=2.5cm,right=2.5cm,top=3cm,bottom=3cm}


\title{Functional convex order for the scaled McKean-Vlasov processes}

\usepackage{hyperref}
\newcommand{\footremember}[2]{
   \footnote{#2}
    \newcounter{#1}
    \setcounter{#1}{\value{footnote}}
}

\author{%
 Yating Liu\footremember{a}{\small CEREMADE, CNRS, UMR 7534, Université Paris-Dauphine,
PSL University, 75016 Paris, France, \texttt{liu@ceremade.dauphine.fr}.}%
  \and Gilles Pag{\`e}s\footremember{b}{\small Sorbonne Université, CNRS, Laboratoire de Probabilités, Statistiques et Modélisations (LPSM),
75252 Paris, France, \texttt{gilles.pages@sorbonne-universite.fr}.}%
  }

\begin{document}
\maketitle

\numberwithin{equation}{section}
\newtheorem{thm}{Theorem}
\newtheorem{lem}{Lemma}[section]
\newtheorem{prop}{Proposition}[section]
\newtheorem{cor}{Corollary}[section]
\newtheorem{defn}{Definition}[section]
\theoremstyle{remark}
\newtheorem{rem}{Remark}[section]
\newtheorem{exple}{Example}[section]
\newtheorem{manualtheoreminner}{Assumption}
\newenvironment{manualtheorem}[1]{%
  \renewcommand\themanualtheoreminner{#1}%
  \manualtheoreminner
}{\endmanualtheoreminner}


\newcommand{\comp}[1]{{#1}^{\mathsf{c}}} 

\newcommand{\widesim}[2][1.5]{
  \mathrel{\overset{#2}{\scalebox{#1}[1]{$\sim$}}}
}

\newcommand{\RRD}{\mathbb{R}^{d}}
\newcommand{\vertiii}[1]{{\left\vert\kern-0.25ex\left\vert\kern-0.25ex\left\vert #1 
    \right\vert\kern-0.25ex\right\vert\kern-0.25ex\right\vert}}
\newcommand{\vertii}[1]{\left\Vert #1\right\Vert}
\newcommand{\Proj}{\mathrm{Proj}}

\newcommand{\tabincell}[2]{\begin{tabular}{@{}#1@{}}#2\end{tabular}}
\newcommand*\circled[1]{\tikz[baseline=(char.base)]{
            \node[shape=circle,draw,inner sep=2pt] (char) {#1};}}

\newcommand\independent{\protect\mathpalette{\protect\independenT}{\perp}}
\def\independenT#1#2{\mathrel{\rlap{$#1#2$}\mkern2mu{#1#2}}}

\newcommand{\PPC}{\mathcal{P}_{p}\big(\mathcal{C}([0, T], \mathbb{R}^{d})\big)}
\newcommand{\CPP}{\mathcal{C}\big([0, T], \mathcal{P}_{p}(\mathbb{R}^{d})\big)}
\newcommand{\CRD}{\mathcal{C}([0, T], \mathbb{R}^{d})}
\newcommand{\PPRD}{\mathcal{P}_{p}(\mathbb{R}^{d})}

\newcommand{\PP}{\mathbb{P}}
\newcommand{\RD}{\mathbb{R}^{d}}
\newcommand{\RR}{\mathbb{R}}
\newcommand{\PRD}{\mathcal{P}(\mathbb{R}^{d})}
\newcommand{\MDQ}{\mathbb{M}_{d \times q}}
\newcommand{\EE}{\mathbb{E}\,}
\newcommand{\conright}{\preceq_{\,cv}}
\newcommand{\conleft}{\succeq{\,cv}}
\newcommand{\coefax}{\alpha(t)}
\newcommand{\coefay}{\alpha(t)}

\newcommand{\coefbx}{\beta(t, \mathbb{E}X_t)}
\newcommand{\coefby}{\beta(t, \mathbb{E}Y_t)}

\newcommand{\driftx}{b(t, X_t, \mu_t)}
\newcommand{\drifty}{b(t, \,Y_t\,, \nu_t\,)}

\newcommand{\ve}{\varepsilon}

\begin{abstract}
We establish the functional convex order results for two scaled McKean-Vlasov processes $X=(X_{t})_{t\in[0, T]}$ and $Y=(Y_{t})_{t\in[0, T]}$ defined on a filtered probability space $(\Omega, \mathcal{F}, (\mathcal{F}_{t})_{t\geq0}, \mathbb{P})$ by 

\[\begin{cases}
dX_{t}= b(t, X_{t}, \mu_{t})dt+\sigma(t, X_{t}, \mu_{t})dB_{t}, \;\;X_{0}\in L^{p}(\mathbb{P}),\\
dY_{t}\,= b(t, \,Y_{t}\,,\, \nu_{t})dt+\theta(t, \,Y_{t}\,,\, \nu_{t})dB_{t}, \;\;Y_{0}\in L^{p}(\mathbb{P}),\\
\end{cases}\]
 where $p\geq2$, for every $ t\in[0, T]$, $\mu_t$, $\nu_t$ denote the probability distribution of $X_t$, $Y_t$  respectively and the drift coefficient $b(t, x, \mu)$ is affine in $x$ (scaled). 
If we make the convexity and monotony assumption (only) on $\sigma$ and if $\sigma\preceq\theta$ with respect to the partial matrix order, the convex order for the initial random variable $X_0 \conright Y_0$ can be propagated to the whole path of process $X$ and $Y$. That is, if we consider a convex functional $F$ defined on the path space with polynomial growth, we have $\mathbb{E}F(X)\leq\mathbb{E}F(Y)$; for a convex functional $G$ defined on the product space involving the path space and its marginal distribution space, we have $\mathbb{E}\,G\big(X, (\mu_t)_{t\in[0, T]}\big)\leq \mathbb{E}\,G\big(Y, (\nu_t)_{t\in[0, T]}\big)$ under appropriate conditions. The symmetric setting is also valid, that is, if $\theta \preceq \sigma$ and $Y_0 \leq X_0$ with respect to the convex order, then $\mathbb{E}\,F(Y) \leq \mathbb{E}\,F(X)$ and $\mathbb{E}\,G\big(Y, (\nu_t)_{t\in[0, T]}\big)\leq \mathbb{E}\,G(X, (\mu_t)_{t\in[0, T]})$. The proof is based on several forward and backward dynamic programming principles and the convergence of the Euler scheme of the McKean-Vlasov equation.

\end{abstract}

\emph{Keywords:} Convergence rate of the Euler scheme, Diffusion process, Functional convex order, McKean-Vlasov equation

\section{Introduction}


Let $U, V: (\Omega, \mathcal{F}, \mathbb{P})\rightarrow \big(\mathbb{R}^{d}, \mathcal{B}(\mathbb{R}^{d})\big)$ be two {\color{black}integrable} random variables. We say that $U$ is dominated by $V$ for the convex order -  denoted by $U\preceq_{\,cv}V$ - if for any convex function $\varphi: \mathbb{R}^{d}\rightarrow\mathbb{R}$, such that $\mathbb{E} \,\varphi(U)$ and $\mathbb{E} \,\varphi(U)$ are well defined in $(-\infty, +\infty]$,
\begin{equation}\label{defconv}
\mathbb{E} \,\varphi(U)\leq \mathbb{E} \,\varphi (V).\end{equation}
Note  that if $U$ is integrable, then $\mathbb{E} \,\varphi(U)$ is always well-defined in $(-\infty, +\infty]$ by considering $\varphi^{\pm}(U)$ with $\varphi^{\pm}(x)\coloneqq\max(\pm\varphi(x), 0)$ since $\varphi^{-}$ is upper bounded by an affine function. For $p\in[1,+\infty)$, let $\mathcal{P}_{p}(\mathbb{R}^{d})$ denote the set of probability distributions on $\mathbb{R}^{d}$ with $p$-th finite moment. 

Hence, the above definition of the convex order has the obvious equivalent version for two probability distributions $\mu, \nu\in\mathcal{P}_{1}(\mathbb{R}^{d})$: we say that the distribution $\mu$ is dominated by $\nu$ for the convex order -  denoted by $\mu\preceq_{\,cv}\nu$ - if, for every convex function $\varphi:\mathbb{R}^{d}\rightarrow\mathbb{R}$, $\int_{\mathbb{R}^{d}} \varphi(\xi)\mu(d\xi)\leq \int_{\mathbb{R}^{d}}\varphi(\xi)\nu(d\xi).$ 

Also note that, as $U$ and $V$ have a finite first moment, then 
\begin{equation}\label{equalE}
U\preceq_{\,cv} V \quad\Longrightarrow \quad\EE\, U= \EE\, V
\end{equation} by simply considering the two linear functions $\varphi(x) = \pm x$. In fact,  
the connection 
between the  distributions of $U$ and $V$, say $\mu$ and $\nu$,  is much stronger than this necessary condition or the elementary domination inequality $\text{var}(U)\leq \text{var}(V)$ when $U,V\in L^{2}(\mathbb{P})$. Indeed, a special case of Kellerer's  theorem (\cite{KEL, HiPr}) shows that $\mu\preceq_{cv}\nu$ if and only if there exists a probability space $(\tilde \Omega,  \tilde{\cal A}, \tilde\PP)$  and a couple $(\tilde U, \tilde V)$ such that $U\sim \mu$, $\tilde V\sim \nu$ and $\tilde \EE (\tilde V\,|\, \tilde U) = \tilde U$.  Similarly Strassen's theorem (\cite{Strassen}) establishes the equivalence with the existence of a martingale Markovian kernel~(\footnote{For every $x\!\in \RR^d$, $K(x,dy)$ is a probability measure on $(\RR^d, {\cal B}or(\RR^d))$ and the function $x\mapsto K(x,A)$ is Borel for every fixed Borel set $A$ of $\RR^d$.}) $K(x,dy)$ such that $\nu(dy) = \int_{\RR^d} K(x,dy)\mu(dx)$ and $\int_{\RR^d} yK(x,dy) = x$  for every $x\!\in \RR^d$. 

The functional convex order for two Brownian martingale diffusion processes having a form $dX_{t}=b(t, X_{t})dt+\sigma(t, X_{t})dB_{t}$ has been studied in~\cite{pages2016convex},~\cite{alfonsi2018sampling} and~\cite{jourdain2019convex} (among other references). Such functional convex order results have applications  in  quantitative finance to establish robust bounds for various option prices including those written on path-dependent payoffs. 
In this paper, we extend such functional convex order results to  the McKean-Vlasov equation, which was originally introduced in~\cite{mckean1967propagation} as a stochastic model naturally associated to a class of non-linear PDEs. Nowadays, it refers to the whole family of stochastic differential equations whose coefficients not only depend on the position of the process $X_{t}$  at time $t$  but also on its probability distribution $\mathbb{P}_{X_{t}}=\mathbb{P}\,\circ\, X_{t}^{-1}$. Thanks to this specific structure, the McKean-Vlasov equations have become widely used to model phenomenons  
 in Statistical Physics (see e.g.~\cite{martzel2001mean}), in mathematical biology (see e.g.~\cite{MR2974499} and~\cite{Bossyetal}), but also in social sciences and in quantitative finance often motivated by the development of the Mean-Field Games (see e.g. \cite{MR3072222}, \cite{MR3395471}, \cite{cardaliaguet2018mean} and~\cite{MR3752669}). Moreover, results in this paper can be used to establish the convex bounds and the convex partitions, which may be extended to applications within the framework of Mean-Field Games in a future work (see further Section~\ref{appli}). For example, the modeling of the gas storage (see e.g. \cite{RapGasnier}), or the stochastic control of McKean–Vlasov type when the control appears in the volatility coefficient (see e.g. \cite{MR2822408}, \cite{MR3072755}).

We consider now a filtered probability space $(\Omega, \mathcal{F}, (\mathcal{F}_{t})_{t\geq0}, \mathbb{P})$ satisfying the usual condition and an $(\mathcal{F}_{t})$-standard Brownian motion $(B_{t})_{t\geq0}$ defined on this space and valued in $\mathbb{R}^{q}$. Let 
$\mathbb{M}_{d\times q}(\mathbb{R})$ denote the set of matrices with $d$ rows and $q$ columns equipped with the operator norm $\vertiii{\cdot}$ defined by $\vertiii{A}\coloneqq \sup_{\left|z\right|_{q}\leq1}\left|Az\right|$, where $\left|\cdot\right|$ denotes the canonical Euclidean norm on $\RD$ generated by the canonical inner product $\langle\cdot|\cdot\rangle$. 
Let 
$X=(X_{t})_{t\in[0, T]}$ and $Y=(Y_{t})_{t\in[0, T]}$ be two $d$-dimensional McKean-Vlasov processes, respective solutions to 
\begin{align}\label{defconvx}
&dX_{t}=\driftx\,dt+\sigma(t, X_{t}, \mu_{t})dB_{t}, \;\;\;X_{0}\in L^{p}(\mathbb{P}),\\
\label{defconvy}
&dY_{t}\,=\drifty\,dt+\theta(t, \,Y_{t}, \,\nu_{t}\,)\,dB_{t}, \;\;\;\,Y_{0}\in L^{p}(\mathbb{P}),
\end{align}
where $p\geq2$, $b: [0, T]\times\mathbb{R}^{d}\times \mathcal{P}_{p}(\mathbb{R}^{d})\rightarrow \RD$,  
$\sigma, \theta\,:\,[0, T]\times\mathbb{R}^{d}\times \mathcal{P}_{p}(\mathbb{R}^{d})\rightarrow\mathbb{M}_{d\times q}$ 
and, for every $t\in[0, T]$, $\mu_{t}$ and $\nu_{t}$ respectively denote the probability distribution of $X_{t}$ and $Y_{t}$. 

  In this paper, we will only consider the \textit{scaled} McKean-Vlasov processes, which means the drift function $b: (t, x, \mu)\in [0, T]\times\mathbb{R}^{d}\times \mathcal{P}_{p}(\mathbb{R}^{d})\mapsto b(t,x,\mu)\!\in \RD$ {\em  is affine in $x$} (see further Assumption~\ref{AssumptionII}-(1)).

We define a \textit{partial order} between two matrices in $\mathbb{M}_{d\times q}$ as follows: 
\begin{equation}\label{matrixorder}
\forall\, \, A, B\in\mathbb{M}_{d\times q},\quad \text{$A\preceq B$\quad if \quad$BB^{\top}-AA^{\top}$ is a positive semi-definite matrix},
\end{equation}
where $A^{\top}$ stands for the transpose of the matrix $A$. 
Moreover, we introduce the $L^{p}$-\textit{Wasserstein distance} $\mathcal{W}_{p}$ on $\mathcal{P}_{p}(\mathbb{R}^{d})$ defined for any $\mu, \nu\in\mathcal{P}_{p}(\mathbb{R}^{d})$ by 
\begin{align}\label{defwas2}
\mathcal{W}_{p}(\mu,\nu)&=\Big{(}\inf_{\pi\in\Pi(\mu,\nu)}\int_{\mathbb{R}^{d}\times \mathbb{R}^{d}}d(x,y)^{p}\pi(dx,dy)\Big{)}^{\frac{1}{p}}\nonumber\\
&=\inf\Big{\{}\Big{[}\mathbb{E}\,\left|X-Y\right|^{p}\Big{]}^{\frac{1}{p}},\,  X,Y:(\Omega,\mathcal{A},\mathbb{P})\rightarrow( \mathbb{R}^{d},Bor( \mathbb{R}^{d}))  \,\text{with} \,\mathbb{P}_{X}=\mu, \mathbb{P}_{Y}=\nu\,\Big{\}},
\end{align}
where in the first line of~(\ref{defwas2}), $\Pi(\mu,\nu)$ denotes the set of all probability measures on $( \mathbb{R}^{d}\times  \mathbb{R}^{d}, Bor( \mathbb{R}^{d})^{\otimes2})$ with marginals $\mu$ and $\nu$. 

\smallskip
Throughout this paper, we make the following two assumptions on the coefficients $b$, $\sigma$ and the starting values $X_0$ and $Y_0$. Both depend on an integrability exponent $p\!\in [2, +\infty)$. 

\begin{manualtheorem}{I}\label{AssumptionI}
Assume $\vertii{X_{0}}_{p}\vee \vertii{Y_{0}}_{p}<+\infty$. 
The functions $b$, $\sigma$ and $\theta$ are $\rho\,$-H\"older continuous in $t$ and Lipschitz continuous in $x$ and in $\mu$ in the following sense: for every $s, t\in[0, T]$ with $s\le t$, there exist a positive constant $\tilde{L}$ such that
\begin{align}\label{assumpholder}
&\forall\, \, x\in\mathbb{R}^{d}, \,\forall\,\mu\in\mathcal{P}_{p}(\mathbb{R}^{d}),\,
\nonumber\\
& \hspace{0.5cm}\left|b(t, x, \mu)-b(s, x, \mu)\right|\vee\vertiii{\sigma(t, x, \mu)-\sigma(s, x, \mu)}\vee\vertiii{\theta(t, x, \mu)-\theta(s, x, \mu)}\nonumber\\
& \hspace{4cm}\leq \tilde{L}\big(1+\left|x\right|+\mathcal{W}_{p}(\mu, \delta_{0})\big)(t-s)^{\rho}, &
\end{align}
where $\delta_{0}$ denotes the Dirac mass at $0$; for every $t\in[0, T]$, there exists $L>0$ such that
\begin{align}\label{assumplip}
&\forall\, \, x,y \in\mathbb{R}^{d},\,\forall\, \, \mu, \nu\in\mathcal{P}_{p}(\mathbb{R}^{d}),\nonumber\\
&\hspace{0.5cm}\left|b(t, x, \mu) - b(t, y, \nu)\right|\vee\vertiii{\sigma(t, x, \mu) - \sigma(t, y, \nu)}\vee\vertiii{\theta(t, x, \mu) - \theta(t, y, \nu)}\nonumber\\
&\hspace{4cm}\leq L\Big(\left|x-y\right|+\mathcal{W}_{p}(\mu, \nu)\Big).
\end{align}
\end{manualtheorem}


\begin{manualtheorem}{II}\label{AssumptionII}

\noindent$(1)$ The function $b$ is affine in $x$ and constant in $\mu$ w.r.t the convex order in the sense that for every $\mu, \,\nu\in\mathcal{P}_{p}(\RD)$ with $\mu\conright\nu$, we have 
\begin{equation}\label{oldb}
\forall\, (t,x)\in[0, T]\times \RD, \quad b(t, x, \mu)=b(t, x, \nu).
\end{equation}

\vspace{-0.2cm}

\noindent$(2)$ For every fixed $t\in \mathbb{R}_{+}$ and $\mu\in\mathcal{P}_{p}(\mathbb{R}^{d})$, the function $x\mapsto\sigma(t, x, \mu)$ is convex in the sense that 
\begin{equation}\label{assumptionuconv}
\forall\, x, y\in\mathbb{R}^{d}, \forall\lambda\in[0, 1], \hspace{0.5cm} \sigma(t, \lambda x+(1-\lambda)y, \mu)\preceq \lambda\sigma(t, x, \mu)+(1-\lambda)\sigma(t, y, \mu). 
\end{equation}

\noindent$(3)$ For every fixed $(t,x)\in\RR_{+}\times\RD$, the function $\mu\mapsto\sigma(t, x, \mu)$ is non-decreasing with respect to the convex order, that is, 
\begin{equation}\label{assumptionorder}
\forall\, \mu, \nu\in \mathcal{P}_{p}(\RD),\quad\mu\preceq_{\,cv}\nu  \quad\Longrightarrow \quad \sigma(t, x, \mu)\preceq \sigma(t, x, \nu).
\end{equation}

\noindent$(4)$  For every $(t, x, \mu)\in \mathbb{R}_{+}\times \mathbb{R}^{d}\times\mathcal{P}_{p}(\mathbb{R}^{d})$, we have
\begin{equation}\label{hysigthe}
\sigma(t, x, \mu)\preceq \theta(t, x, \mu).
\end{equation}

\noindent$(5)$ $X_{0}\preceq_{\,cv}Y_{0}$.
\end{manualtheorem}

\begin{rem} Note that if Assumption~II  is satisfied with some $p_0\ge 1$ (especially when $p_0=1$) then it is satisfied for any $p\ge p_0$ by the restrictions of $b$ and $\sigma$ to $[0,T]\times \RD\times {\cal P}_p(\RD)$ since ${\cal P}_p(\RD)\subset {\cal P}_{p_0}(\RD)$. Idem for Assumption~I, except of course for the integrability of $X_0$ and $Y_0$, since ${\cal W}_{p_0}\le {\cal W}_p$. More generally we will often use without specific mention that the restriction to ${\cal P}_p(\RD)$ of a ${\cal W}_1$-continuous (resp. Lipschitz) functional $\Phi : {\cal P}_1(\RD) \to \RR$  is ${\cal W}_p$-continuous (resp. Lipschitz).
\end{rem}
Let $E$ denote a separable Banach space equipped with the norm $\left|\,\cdot\,\right|_{E}$. A function $f: (E, \left|\,\cdot\,\right|_{E})\rightarrow \mathbb{R}$ has an \textit{$r$-polynomial growth} for some $r\ge 0$ if there exists a constant $C\in\mathbb{R}_{+}^{*}$ such that for every 
$ x\in E, \; \left|f(x)\right|\leq C(1+\left|x\right|_{E}^{r})$. 
Moreover, let 
\begin{flalign}\label{pathdisspacedef}
&\mathcal{C}\big( [0, T], \mathcal{P}_{p}(\mathbb{R}^{d})\big)\!:=\!\big\{(\mu_{t})_{t\in[0, T]}\text{ such that the mapping }t\mapsto\mu_{t}&\nonumber\\
&\hspace{4cm} \text{ is  continuous  from } [0,T] \text{ to } \big(\mathcal{P}_{p}(\mathbb{R}^{d}),\! \mathcal{W}_{p}\big) \big\}&
\end{flalign}
equipped with the distance 
\begin{equation}\label{distancedc}
  d_{\mathcal{C}}\big((\mu_{t})_{t\in[0, T]}, (\nu_{t})_{t\in[0, T]}\big)\coloneqq\sup_{t\in[0, T]}\mathcal{W}_{p}(\mu_{t}, \nu_{t})
\end{equation}
be the space in which the marginal distribution of $X=(X_{t})_{t\in [0, T]}$ and $Y=(Y_{t})_{t\in [0, T]}$ have values. The continuity of $t\mapsto \mu_t = \PP_{X_t}$ will be proved later in Lemma~\ref{injectionmeasure}. 

The main theorem of this paper is the following.

\begin{thm}\label{mainthmconv} Let $p\!\in [2, +\infty)$. Assume~\ref{AssumptionI} and~\ref{AssumptionII} are in force.
Let $X\coloneqq (X_{t})_{t\in[0, T]}$, $Y\coloneqq (Y_{t})_{t\in[0, T]}$  denote the solutions of the McKean-Vlasov equations~(\ref{defconvx}) and~(\ref{defconvy}) respectively. For every $t\in[0, T]$, let $\mu_{t}$, $\nu_{t}$  denote the probability distributions of $X_{t}$ and $Y_{t}$ respectively.   Then, we have


\noindent $(a)$ {\em Functional convex order.} For any convex function $F: \big(\mathcal{C}([0, T], \mathbb{R}^{d}), \vertii{\cdot}_{\sup}\big) \rightarrow \mathbb{R}$ with 
$p$-polynomial growth, 
one has 
\begin{equation}\label{convf}
\EE F(X)\leq \EE F(Y). 
\end{equation}

\noindent $(b)$ {\em Extended functional convex order.} For any function \[G: \big(\alpha, (\eta_{t})_{t\in[0, T]}\big)\in\mathcal{C}\big([0, T], \RD\big)\times \mathcal{C}\big([0, T], \mathcal{P}_{1}(\RD)\big)\mapsto G\big(\alpha, (\eta_{t})_{t\in[0, T]}\big)\in \RR\]
satisfying the following conditions:

\begin{enumerate}[$(i)$]
\item $G$ is convex in $\alpha$, 
\item $G$ has a $p$-polynomial growth
in the sense that 
\begin{align}
\nonumber&\exists \,C\in\mathbb{R}_{+}\text{ such that }\forall\, \, \big(\alpha, (\eta_{t})_{t\in[0, T]}\big)\in\mathcal{C}\big([0, T], \RD\big)\times \mathcal{C}\big([0, T], \PPRD\big),  \\
\label{rpolygrowth}&G\big(\alpha, (\eta_{t})_{t\in[0, T]}\big)\leq C\big[1+\vertii{\alpha}_{\sup}^{p}+\sup_{t\in[0, T]}\mathcal{W}_{p}^{\,p}(\eta_{t}, \delta_{0})\big],
\end{align}

\item $G$ is continuous in $(\eta_{t})_{t\in[0, T]}$ with respect to the distance $d_{\mathcal{C}}$ defined in~(\ref{distancedc}) and non-decreasing in $(\eta_{t})_{t\in[0, T]}$ with respect to the convex order in the sense that 
\begin{flalign}
&\forall\, \alpha\in \mathcal{C}\big([0, T], \RD\big),\: \forall\, \, (\eta_{t})_{t\in[0, T]}, (\tilde{\eta}_{t})_{t\in[0, T]}\in \mathcal{C}\big([0, T], \PPRD\big) \text{ s.t. } \forall\, \, t \!\in[0, T],\,\eta_{t}\conright\tilde{\eta}_{t},&\nonumber\\
& \hspace{4cm} \;G\big(\alpha,  (\eta_{t})_{t\in[0, T]}\big)\leq G\big(\alpha,  (\tilde{\eta}_{t})_{t\in[0, T]}\big),&\nonumber
 \end{flalign}
\end{enumerate}

\noindent one has 
\begin{equation}\label{convgpro}
\EE G\big(X, (\mu_{t})_{t\in[0, T]}\big)\leq \EE G\big(Y, (\nu_{t})_{t\in[0, T]}\big). 
\end{equation}
\end{thm}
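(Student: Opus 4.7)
The plan is to reduce to the discrete setting via the Euler--Maruyama discretization, establish the functional convex order at the level of the schemes by a forward induction on the time index, and then pass to the continuous-time limit. Fix $n \geq 1$, set $h = T/n$, $t_k = kh$, and introduce
\[\bar X^n_{k+1} = \bar X^n_k + h\, b(t_k, \bar X^n_k, \bar\mu^n_k) + \sigma(t_k, \bar X^n_k, \bar\mu^n_k)(B_{t_{k+1}} - B_{t_k}), \qquad \bar X^n_0 = X_0,\]
with $\bar\mu^n_k := \PP_{\bar X^n_k}$, and analogously $\bar Y^n$ built with $\theta$ and $\bar\nu^n_k$. Under Assumption~\ref{AssumptionI}, a Gronwall-type estimate adapted to the McKean--Vlasov setting yields $L^p$-convergence of the continuous interpolations $\tilde X^n \to X$ and $\tilde Y^n \to Y$ in $\mathcal{C}([0, T], \RD)$, with a quantitative rate.

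The core inductive claim is that for every $k \in \{0, \ldots, n\}$ and every convex $\phi : (\RD)^{k+1} \to \RR$ with $p$-polynomial growth,
\[\EE \phi(\bar X^n_0, \ldots, \bar X^n_k) \leq \EE \phi(\bar Y^n_0, \ldots, \bar Y^n_k),\]
which, by specializing $\phi$ to depend only on $x_k$, also delivers the marginal order $\bar\mu^n_k \conright \bar\nu^n_k$. The base case $k = 0$ is Assumption~\ref{AssumptionII}(5). For the step $k \to k+1$, given convex $\phi$ at level $k+1$, I would introduce
\[\psi(x_0, \ldots, x_k) := \mathbb{E}\bigl[\phi\bigl(x_0, \ldots, x_k,\, x_k + h\, b(t_k, x_k, \bar\mu^n_k) + \sqrt{h}\, \sigma(t_k, x_k, \bar\mu^n_k)\, Z\bigr)\bigr],\]
with $Z \sim \mathcal{N}(0, I_q)$ an independent standard Gaussian (the expectation being taken only in $Z$), and define $\tilde\psi$ by replacing $(\sigma, \bar\mu^n_k)$ with $(\theta, \bar\nu^n_k)$. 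Thanks to the Lipschitz growth in Assumption~\ref{AssumptionI} and uniform $L^p$-bounds on $\bar X^n_k$, $\psi$ inherits $p$-polynomial growth. The step reduces to: (i) $\psi$ is convex; (ii) $\psi \leq \tilde\psi$ pointwise, which follows from $b(t_k, \cdot, \bar\mu^n_k) = b(t_k, \cdot, \bar\nu^n_k)$ (Assumption~\ref{AssumptionII}(1) applied to the inductive marginal order) and from the matrix-order chain $\sigma(t_k, x, \bar\mu^n_k) \preceq \sigma(t_k, x, \bar\nu^n_k) \preceq \theta(t_k, x, \bar\nu^n_k)$ granted by Assumptions~\ref{AssumptionII}(3)--(4). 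One then chains
\[\EE\phi(\bar X^n_0, \ldots, \bar X^n_{k+1}) = \EE\psi(\bar X^n_0, \ldots, \bar X^n_k) \leq \EE\psi(\bar Y^n_0, \ldots, \bar Y^n_k) \leq \EE\tilde\psi(\bar Y^n_0, \ldots, \bar Y^n_k) = \EE\phi(\bar Y^n_0, \ldots, \bar Y^n_{k+1}),\]
the middle inequality being the inductive hypothesis applied to the convex function $\psi$.

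Part~(a) then follows by letting $n \to \infty$: a convex real-valued $F$ of $p$-polynomial growth on the Banach space $\mathcal{C}([0, T], \RD)$ is continuous, and the polynomial growth together with uniform $L^p$-bounds on the schemes provides uniform integrability of $\{F(\tilde X^n)\}$ and $\{F(\tilde Y^n)\}$, whence $\EE F(X) = \lim_n \EE F(\tilde X^n) \leq \lim_n \EE F(\tilde Y^n) = \EE F(Y)$. Part~(b) is a corollary: applying~(a) to the test functionals $\alpha \mapsto \varphi(\alpha_t)$ for convex $\varphi$ with $p$-polynomial growth yields $\mu_t \conright \nu_t$ for every $t$, so the monotonicity of $G$ in its measure-flow argument gives $\EE G(X, (\mu_t)_{t \in [0, T]}) \leq \EE G(X, (\nu_t)_{t \in [0, T]})$, while applying~(a) to the convex functional $\alpha \mapsto G(\alpha, (\nu_t)_{t \in [0, T]})$ gives $\EE G(X, (\nu_t)_{t \in [0, T]}) \leq \EE G(Y, (\nu_t)_{t \in [0, T]})$.

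The main obstacle is the convexity of $\psi$ in~(i). The subtlety is that the convexity of $\sigma$ holds only in the matrix partial order~$\preceq$, which does \emph{not} imply pointwise convexity of $x \mapsto \sigma(t_k, x, \bar\mu^n_k)\, Z$ for a fixed Gaussian realisation. The crucial auxiliary lemma is that, for any convex $\varphi : \RD \to \RR$ and any standard Gaussian $Z$ in $\RR^q$, the map $(a, M) \mapsto \mathbb{E}[\varphi(a + MZ)]$ is convex in $a \in \RD$ and non-decreasing in $M \in \MDQ$ with respect to $\preceq$: the monotonicity is Strassen's theorem specialized to centered Gaussians, since $M \preceq N$ implies $MZ \conright N \widetilde Z$ for an independent standard Gaussian $\widetilde Z$. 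Composing this lemma with the affinity of $b$ in $x$ and the matrix-order convexity of $\sigma$ via a two-step Jensen-type argument yields the convexity of $\psi$; the very same lemma underlies the comparison $\psi \leq \tilde\psi$ in~(ii).
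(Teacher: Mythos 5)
Your proof is correct, and at both stages it takes a route that is organized differently from the paper's. At the level of the Euler scheme, the paper splits the work in two: a forward induction that propagates only the marginal order $\bar\mu_m\conright\bar\nu_m$ (Proposition~\ref{marginalconvex}), followed by a backward dynamic programming principle built on the auxiliary functions $\Phi_m,\Psi_m$ of~\eqref{defPhi2} and~\eqref{defpsi}, whose convexity in $x_{0:m}$ and monotonicity in the measure arguments (Lemma~\ref{propphi}) yield $\EE F(\bar X_{0},\dots,\bar X_M)\le\EE F(\bar Y_0,\dots,\bar Y_M)$. You instead run a single forward induction on the strengthened hypothesis ``$\EE\phi(\bar X^n_{0},\dots,\bar X^n_k)\le\EE\phi(\bar Y^n_0,\dots,\bar Y^n_k)$ for every convex $\phi$ with $p$-polynomial growth'', which contains the marginal order (specialize $\phi$ to a function of $x_k$ with linear growth, then invoke the characterization of Lemma~\ref{onlylineargrowth}) and the functional order simultaneously; your one-step kernel $\psi$ is exactly one backward step of the paper's $\Phi$'s, and both arguments rest on the same key facts, namely that $(a,M)\mapsto\EE\,\varphi(a+MZ)$ is convex and non-decreasing in $M$ for the order~\eqref{matrixorder} (Lemmas~\ref{revisiedjensen} and~\ref{orderZ}) together with the Euler-scheme convergence of Proposition~\ref{cvgeuler}. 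Your treatment of part $(b)$ is genuinely simpler than the paper's: instead of redoing the dynamic programming with measure arguments (Proposition~\ref{convschemG}), interpolating the flows $(\bar\mu^M_t)_{t\in[0,T]}$ and exploiting the $d_{\mathcal C}$-continuity of $G$, you deduce $(b)$ from $(a)$, the marginal order $\mu_t\conright\nu_t$ and the monotonicity of $G$, applying $(a)$ to the convex functional $\alpha\mapsto G\big(\alpha,(\nu_t)_{t\in[0,T]}\big)$; this is legitimate because $\sup_{t\in[0,T]}\mathcal{W}_p(\nu_t,\delta_0)<+\infty$ gives the upper $p$-polynomial bound, while convexity and finiteness provide an affine minorant, hence the two-sided growth required in $(a)$ --- and it even dispenses with the continuity assumption on $G$ in its measure argument.

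Two points to tighten. First, the convexity of $\psi$ needs the \emph{joint} convexity of $(a,M)\mapsto\EE\,\varphi(a+MZ)$ (immediate, since $(a,M)\mapsto a+Mz$ is affine for each fixed $z$), not merely convexity in $a$ plus monotonicity in $M$ as your auxiliary lemma is phrased; your two-step Jensen argument is then exactly the paper's proof of Lemma~\ref{marglemma} and of Lemma~\ref{propphi}-$(i)$, so only the statement of the lemma needs adjusting. Second, in the passage to the limit for $(a)$ the discrete inequality applies to $F$ composed with the piecewise-affine interpolator, i.e. to $F\big(I_M(\bar X^M)\big)=F\big(i_M(\bar X_{t_0},\dots,\bar X_{t_M})\big)$, which is a convex function of the discrete values; it does not apply to the genuine continuous-time Euler scheme, which is not a function of those values alone. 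With the affine interpolation as your ``continuous interpolation'', its $L^p$-convergence (or weak convergence plus convergence of $p$-th moments, as in the paper via Lemma~\ref{Imlemma}) makes your uniform-integrability argument go through.
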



The proof is postponed to Section~\ref{convprocess} (and Section~\ref{eulerconv} for preliminary discrete  time results). The symmetric case of Theorem~\ref{mainthmconv} remains true, that is, if we replace Assumption~\ref{AssumptionII} by Assumption~\ref{AssumptionII}\textcolor{blue}{'} where conditions $(4)$ and $(5)$ are replaced respectively by $(4')$ and $(5')$ as follows:
\begin{enumerate}
\item[$(4')$] \textit{For every $(t, x, \mu)\in \mathbb{R}_{+}\times \mathbb{R}^{d}\times\mathcal{P}_{p}(\mathbb{R}^{d})$, we have} 
$\theta(t, x, \mu)\preceq \sigma(t, x, \mu).$
\item[$(5')$]
$Y_{0}\preceq_{\,cv}X_{0}$,
\end{enumerate}
\noindent then we have the following result,  whose proof is very similar to that of Theorem~\ref{mainthmconv}.

\begin{thm}[Symmetric setting]\label{mainthmconvdual} Let $p\!\in[2, +\infty)$. Under Assumption~\ref{AssumptionI} and~\ref{AssumptionII}\textcolor{blue}{'}, for every functions $F:\mathcal{C}([0, T], \mathbb{R}^{d})\rightarrow \mathbb{R}$ and $G: \mathcal{C}\big([0, T], \RD\big)\times \mathcal{C}\big([0, T], \PPRD\big)\mapsto \RR$ respectively satisfying the conditions in Theorem~\ref{mainthmconv} - $(a)$ and $(b)$, then
 \[
 \EE F(Y)\leq \EE F(X) \quad\text{and}\quad \EE G\big(Y, (\nu_{t})_{t\in[0, T]}\big)\leq \EE G\big(X, (\mu_{t})_{t\in[0, T]}\big).
 \]
\end{thm}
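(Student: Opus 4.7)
The plan is to mirror the proof of Theorem~\ref{mainthmconv}, reversing the direction of every convex-order comparison while leaving the structural hypotheses~II(2)--(3) on $\sigma$ --- the only ones that drive the propagation arguments --- untouched. As in Theorem~\ref{mainthmconv}, I would first establish the statement at the level of the Euler schemes $\bar X=(\bar X_{t_k})_{0\le k\le n}$ and $\bar Y=(\bar Y_{t_k})_{0\le k\le n}$ with step $h=T/n$, then pass to the continuous-time limit via the $L^p$-convergence of the Euler schemes proved in Section~\ref{eulerconv}, using the $p$-polynomial growth of $F$ (resp.\ $G$) and the uniform $p$-moment bounds to justify the interchange.

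The core discrete claim is $\mathbb{E}\,F(\bar Y)\le \mathbb{E}\,F(\bar X)$ for every convex $F$ of $p$-polynomial growth, under $Y_0\conright X_0$ and $\theta\preceq\sigma$. Introduce the one-step $\sigma$-transition
\[
Q_k\varphi(x,\mu) := \mathbb{E}\bigl[\varphi\bigl(x + b(t_k,x,\mu)h + \sigma(t_k,x,\mu)\sqrt{h}\,Z\bigr)\bigr],\qquad Z\sim\mathcal{N}(0,I_q),
\]
together with its path-valued analogue used in backward dynamic programming. Since $b$ is affine in $x$ by Assumption~II(1), the drift step preserves convexity; combined with Assumption~II(2), this shows that $Q_k$ maps a convex $\varphi$ to $Q_k\varphi(\,\cdot\,,\mu)$ convex in $x$, and Assumption~II(3) further makes $\mu\mapsto Q_k\varphi(x,\mu)$ non-decreasing for the convex order. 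These are precisely the ``convexity propagation'' and ``monotonicity propagation'' that drive Theorem~\ref{mainthmconv}, and they are entirely indifferent to the direction of the final comparison.

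The induction now runs in the reverse direction. A downward induction on $k$, starting from $V_n := F$ and using the backward DP driven by the $\sigma$-operator, produces functionals $V_k$ that are convex in their $k$-th spatial slot; an upward induction on $k$ then yields the running convex-order relation $\bar\nu_k\conright\bar\mu_k$ between the marginal laws of $\bar Y$ and $\bar X$. The one-step comparison uses (i)~Assumption~II(1) to identify $b(t_k,\,\cdot\,,\bar\nu_k)$ with $b(t_k,\,\cdot\,,\bar\mu_k)$, (ii)~Assumption~$(4')$, $\theta\preceq\sigma$, which gives $\mathbb{E}[\varphi(y+\theta\sqrt{h}\,Z)]\le \mathbb{E}[\varphi(y+\sigma\sqrt{h}\,Z)]$ for convex $\varphi$ by the standard fact that enlarging a centered Gaussian's covariance (in the positive semi-definite order) increases convex expectations, and (iii)~Assumption~II(3) together with the current $\bar\nu_k\conright\bar\mu_k$ to replace $\sigma(t_k,\,\cdot\,,\bar\nu_k)$ by $\sigma(t_k,\,\cdot\,,\bar\mu_k)$. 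Chaining these with $(5')$, $Y_0\conright X_0$, closes the argument on the Euler schemes and, after passing to the limit, on $X,Y$ themselves.

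For the extended functional $G$, the same inductive machinery handles the first argument pathwise, while condition~$(iii)$ of Theorem~\ref{mainthmconv}$(b)$, combined with the running $\bar\nu_k\conright\bar\mu_k$ just produced, handles the measure-valued second argument --- appealing again to Lemma~\ref{injectionmeasure} for the continuity of $t\mapsto\mu_t,\nu_t$ in the continuous-time limit. I expect the only delicate step --- identical to the one in Theorem~\ref{mainthmconv} --- to be the simultaneous propagation of the convexity of the $V_k$'s and of the running convex order between marginals, which couples the downward and upward inductions. Since Assumptions~II(2)--(3) on $\sigma$ are left intact in the symmetric setting, no new obstruction appears, in agreement with the authors' comment that the proof is ``very similar'': it is obtained from that of Theorem~\ref{mainthmconv} by flipping every convex-order comparison forced by $(4')$ and $(5')$.
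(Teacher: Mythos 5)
Your proposal is correct and follows exactly the route the paper intends: the paper gives no separate proof of Theorem~\ref{mainthmconvdual}, stating only that it is obtained from the proof of Theorem~\ref{mainthmconv} by keeping the structural hypotheses~\ref{AssumptionII}-(2),(3) on $\sigma$ (which drive the convexity/monotonicity propagation in Lemmas~\ref{revisiedjensen}--\ref{marglemma}, Proposition~\ref{marginalconvex} and the backward comparison $\Psi_m\le\Phi_m$) and reversing the comparisons forced by $(4')$ and $(5')$, before passing to the limit via Proposition~\ref{cvgeuler}. Your sketch reproduces this scheme faithfully, including anchoring the dynamic programming comparison on the $\sigma$-driven functionals and flipping the marginal induction to $\bar\nu_m\conright\bar\mu_m$.
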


Theorem~\ref{mainthmconv} directly implies the following results.
\begin{cor}\label{cormain}
Let $X\coloneqq (X_{t})_{t\in[0, T]}$, $Y\coloneqq (Y_{t})_{t\in[0, T]}$  denote the solutions of the McKean-Vlasov equations~(\ref{defconvx}) and~(\ref{defconvy}) respectively. For every $t\in[0, T]$, let $\mu_{t}$, $\nu_{t}$  denote the probability distributions of $X_{t}$ and $Y_{t}$ respectively. Under Assumption~\ref{AssumptionI} and~\ref{AssumptionII}, we have :

\noindent $(a)$ {\em Marginal convex order.} For every $t\in[0, T]$, $\mu_t\conright\nu_t$.

\noindent $(b)$ {\em Convexity with respect to the initial value.}  Let $X^{x}=(X_{t}^{x})_{t\in[0, T]}$ denote the McKean-Vlasov process defined by~\eqref{defconvx} starting with the initial value $X_0=x$. Then for every functionals $F$ and $G$ respectively satisfying conditions from Theorem~\ref{mainthmconv}-$(a)$ and $(b)$, the functions 
$$
x\mapsto \mathbb{E}\,F(X^{x})\quad\mbox{ and }\quad x\mapsto \mathbb{E}\,G\big(X^{x}, (\mu_t)_{t\in[0,T]}\big) \mbox{ are convex.}
$$ 
\end{cor}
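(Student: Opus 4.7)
The plan is to deduce both assertions from Theorem~\ref{mainthmconv} by appropriate choices of test functionals.

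For Part~$(a)$, fix $t\in[0,T]$ and let $\varphi:\RD\to\RR$ be a convex function of $p$-polynomial growth. Define $F:\CRD\to\RR$ by $F(\alpha):=\varphi(\alpha(t))$. Since the evaluation $\alpha\mapsto\alpha(t)$ is linear and $\vertii{\cdot}_{\sup}$-continuous, $F$ inherits convexity and $p$-polynomial growth from $\varphi$. Theorem~\ref{mainthmconv}-$(a)$ then yields $\EE\varphi(X_t)\le\EE\varphi(Y_t)$, i.e.\ $\int\varphi\,d\mu_t\le\int\varphi\,d\nu_t$. The class of convex functions with $p$-polynomial growth being a determining class for the convex order on $\PPRD$ (with a standard truncation / monotone-convergence argument extending to arbitrary convex test functions with well-defined expectation in $(-\infty,+\infty]$, since $X_t,Y_t\in L^p(\PP)$), this gives $\mu_t\conright\nu_t$.

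For Part~$(b)$, I would use a Bernoulli randomization of the initial condition. Fix $x_1,x_2\in\RD$, $\lambda\in[0,1]$, and set $m:=\lambda x_1+(1-\lambda)x_2$. Enlarge the filtered probability space so as to carry an $\mathcal F_0$-measurable random variable $\xi$ independent of $(B_t)_{t\ge 0}$ with $\PP(\xi=x_1)=\lambda$, $\PP(\xi=x_2)=1-\lambda$. Since $\EE\xi=m$, Jensen's inequality gives $m\conright\xi$. Let $X^\xi$ denote the McKean-Vlasov solution to~\eqref{defconvx} with initial value $\xi$ (same $b,\sigma$ as $X$). Applying Theorem~\ref{mainthmconv}-$(a)$ with $\theta:=\sigma$ to the pair $(X^m,X^\xi)$ yields
\[
\EE F(X^m)\le\EE F(X^\xi).
\]
Conditioning on the $B$-independent variable $\xi$, each atom $\{\xi=x_i\}$ fixes the initial value deterministically, and invoking strong uniqueness one identifies the regular conditional law of $X^\xi$ given $\{\xi=x_i\}$ with the law of $X^{x_i}$, so that
\[
\EE F(X^\xi)=\lambda\EE F(X^{x_1})+(1-\lambda)\EE F(X^{x_2}).
\]
Chaining the two estimates gives convexity of $x\mapsto\EE F(X^x)$. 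The statement for $G$ follows identically, using Theorem~\ref{mainthmconv}-$(b)$ and observing that the marginal law path $(\mu^x_t)_{t\in[0,T]}$ of $X^x$ is deterministic and survives the conditioning.

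The genuinely delicate step is the identification above, transparent for classical SDEs but subtle for McKean-Vlasov equations, whose coefficients depend nonlinearly on the marginal law. The argument exploits Assumption~\ref{AssumptionII}-$(1)$ ensuring that the drift depends on $\mu$ only through invariants of the convex order (effectively, only through its mean), so that the dynamics decouple cleanly along the Bernoulli atoms of $\xi$; the diffusion term is similarly handled thanks to the convexity of $\sigma$ in $x$ and its monotonicity in $\mu$ for $\conright$.
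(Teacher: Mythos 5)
Your part $(a)$ is correct and is exactly the paper's argument: compose a convex test function with the linear, $\vertii{\cdot}_{\sup}$-continuous evaluation map $\alpha\mapsto\alpha(t)$ and apply Theorem~\ref{mainthmconv}-$(a)$; since the paper reduces convex ordering to convex functions with linear growth (Lemma~\ref{onlylineargrowth}), which in particular have $p$-polynomial growth, no truncation argument is even needed. The first half of your part $(b)$ is also the paper's reduction: with $m=\lambda x_1+(1-\lambda)x_2$ and $\xi$ two-valued, Jensen gives $\delta_m\conright\mathcal{L}(\xi)$, and Theorem~\ref{mainthmconv} applied with $\theta=\sigma$ yields $\EE F(X^{m})\le \EE F(X^{\xi})$.

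The gap is the decomposition $\EE F(X^{\xi})=\lambda\,\EE F(X^{x_1})+(1-\lambda)\,\EE F(X^{x_2})$, which you justify by conditioning on the atoms of $\xi$ and invoking strong uniqueness. For a McKean--Vlasov equation this identification is false: conditionally on $\{\xi=x_1\}$, $X^{\xi}$ solves the SDE whose coefficients are $b(t,\cdot,\bar\mu_t)$ and $\sigma(t,\cdot,\bar\mu_t)$ with the \emph{unconditional} mixture flow $\bar\mu_t=\mathcal{L}(X^{\xi}_t)$ frozen in, so strong uniqueness identifies the conditional law with the law of that frozen-flow SDE started at $x_1$ --- not with the law of $X^{x_1}$, whose coefficients carry its own flow $\mathcal{L}(X^{x_1}_t)$. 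Concretely, take $d=q=1$, $b\equiv0$, $\sigma(t,x,\mu)=\mathcal{W}_2(\mu,\delta_0)$ (constant hence convex in $x$, nondecreasing for $\conright$, Lipschitz in $\mu$): then $X^{x_1}_t\sim\mathcal{N}\big(x_1,\,x_1^2(e^t-1)\big)$, whereas the conditional law of $X^{\xi}_t$ given $\{\xi=x_1\}$ is $\mathcal{N}\big(x_1,\,\EE[\xi^2](e^t-1)\big)$, and the claimed equality of expectations fails for generic convex $F$. Your proposed repair does not close this: Assumption~\ref{AssumptionII}-$(1)$ does reduce the $\mu$-dependence of $b$ to the mean, but $\EE X^{\xi}_t\neq \EE X^{x_i}_t$ in general, and the $\mu$-dependence of $\sigma$ is not reduced at all; monotonicity of $\sigma$ in $\mu$ only gives one-sided convex-order comparisons whose directions differ on the two atoms, so they cannot be chained into the inequality $\EE F(X^{\xi})\le \lambda\,\EE F(X^{x_1})+(1-\lambda)\,\EE F(X^{x_2})$ that convexity requires. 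You have in fact put your finger on the genuinely delicate point (the paper's own two-line proof of Corollary~\ref{cormain}-$(b)$ performs the same reduction and leaves this mixture-decoupling step implicit), but the conditioning/strong-uniqueness argument you offer does not establish it, so as written the proof of $(b)$ is incomplete.
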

The proof of Corollary~\ref{cormain} is postponed to  Section~\ref{eulerconv}. It also has an obvious  version under Assumption~\ref{AssumptionII}\textcolor{blue}{'}.

In fact, as far as marginal convex order is concerned, it is also possible to dissociate convexity in $x$ and monotonicity in $\mu$ that is replace Assumption~\ref{AssumptionII}-(3) by the following assumption:

\noindent{\em$(3')$ For every fixed $(t,x)\in\RR_{+}\times\RD$, the function $\theta(t, x, \cdot)$ is non-decreasing in $\mu$ with respect to the convex order in the sense that
\begin{equation}
\forall\, \, \mu, \nu\in\mathcal{P}_{p}(\RD), \;\mu\conright\nu, \quad\quad\quad \theta(t, x, \mu)\preceq\theta(t, x, \nu).\nonumber
\end{equation}}

\vspace{-0.5cm}
\noindent Then, we have the same result in Corollary~\ref{cormain}-(a). This is the purpose of Proposition~\ref{marginal2} in Section~\ref{eulerconv}.
 
\noindent 

\smallskip
This paper is organized as follows. Section~\ref{comments} contains comments on the Assumption~\ref{AssumptionI} and~\ref{AssumptionII} including necessary and sufficient conditions on the monotonicity with respect to the convex order in terms of the linear functional derivative. Next, in Section~\ref{appli},  we show two applications of Theorem~\ref{mainthmconv} and~\ref{mainthmconvdual} in the framework of the stochastic differential equation and the stochastic optimal control. The proof of the main theorem is constructed in Section~\ref{eulerconv}-\ref{convprocess}. Our strategy of proof is to first establish the propagation of convex order for the marginal distribution of the Euler scheme of the McKean-Vlasov equation (see Section~\ref{eulerconv}) and then rely on it to establish in a backward way the functional convex order for the whole trajectory (Section~\ref{convprocess}).  To be more precise, in Section~\ref{eulerconv}, we show the convex order result for $\big(\bar{X}^{M}_{t_{m}}\big)_{m=0, \dots,M}$ and $\big(\bar{Y}^{M}_{t_{m}})_{m=0, \dots, M}$ defined by the Euler schemes (see further (\ref{defeulerx}) and~(\ref{defeulery})).  We first prove that the Euler scheme propagates the marginal convex order, namely, for every $m=0, \ldots, M$, $\bar{X}^{M}_{t_{m}}\conright\bar{Y}^{M}_{t_{m}}$. Then we prove the functional convex order 
\begin{equation}\label{conveuler}
\EE F(\bar{X}^{M}_{t_{0}}, \ldots, \bar{X}^{M}_{t_{M}})\leq \EE F(\bar{Y}^{M}_{t_{0}}, \ldots, \bar{Y}^{M}_{t_{M}})
\end{equation} 
for any convex function $F: (\RD)^{M+1}\rightarrow\RR$ with $p$-polynomial growth, by using a backward  dynamic programming principle. Next, in Section~\ref{convprocess}, we prove Theorem~\ref{mainthmconv}, the functional convex order result for the stochastic processes and their probability distributions based on~(\ref{conveuler}) by applying the convergence of the Euler schemes of the McKean-Vlasov equation.  At the end, in Appendix A, we propose a detailed proof of the convergence rate of the Euler scheme for the McKean-Vlasov equation in the general setting
\[dX_{t}=b(t, X_{t}, \mu_{t})dt+\sigma(t, X_{t}, \mu_{t})dB_{t},\] 
where $b$, $\sigma$ are Lipschitz in $(x, \mu)$ and $\rho\,$-H\"older in $t$.

\noindent\textit{Generalization in dimension 1. } In one dimension, it is possible to consider more general drift $b$ (convex in $x$ and non-decreasing in $\mu$ for  convex ordering)  if we restrict to  monotone (non-decreasing) convex order. This idea originated from Hajek's theorem in (\cite{MR771469}) established for Brownian diffusions by other methods. However our approach based on the Euler scheme cannot be adapted straightforwardly:  a truncated version of the scheme is necessary to complete the proofs which adds significant some technicalities. This extension for the McKean-Vlasov equations is developed  in a devoted paper~\cite{liu2021monotone}.

\section{Comments on the assumptions}\label{comments}
In this section, we give some comments on the assumptions made in this paper. In Section~\ref{ci}, we prove that Assumption~\ref{AssumptionI} implies the convergence of the Euler scheme for the McKean-Vlasov equations and in Section~\ref{cii}, we give some necessary and sufficient conditions for Assumption~\ref{AssumptionII}-(1) and (3).

\subsection{Comments on Assumption~\ref{AssumptionI}}\label{ci}
Let \[ \mathcal{C}([0, T], \mathbb{R}^{d})\coloneqq
 \{f: [0, T]\rightarrow \RD \text{ continuous function}\}\] equipped with the uniform norm $\vertii{f}_{\sup}\coloneqq \sup_{t\in[0, T]}|f(t)|$. 
Assumption~\ref{AssumptionI} guarantees the existence and strong uniqueness of the respective solutions
of (\ref{defconvx}) and (\ref{defconvy}) in $L_{\CRD}^{p}(\Omega, \mathcal{F}, \mathbb{P})$ (see \cite[Section 5.1]{liu:tel-02396797}, \cite[Theorem 3.3]{lacker2018mean}) and the convergence of the following Euler scheme.
Let $M\in\mathbb{N}^{*}$ and let $ h= \frac{T}{M}$. For $m=0, \ldots, M$, we define $t_{m}^{M}\coloneqq h\cdot m=\frac{T}{M}\cdot m$. When there is no ambiguity, we write $t_{m}$ instead of $t_{m}^{M}$. Let  $Z_{m}\coloneqq\frac{1}{\sqrt{h}}(B_{t_{m+1}}-B_{t_{m}}),  \,m=1, \ldots, M,$ be i.i.d random variables having probability distribution $\mathcal{N}(0, \mathbf{I}_{q})$, independent of $X_{0}$ and $Y_{0}$. The Euler schemes of  equations~(\ref{defconvx}) and~(\ref{defconvy}) are defined by 
\begin{align}\label{defeulerx}
\bar{X}_{t_{m+1}}^{M}\!\!=\bar{X}_{t_{m}}^{M}+h\cdot b(t_{m}, \bar{X}_{t_{m}}^{M}, \bar{\mu}_{t_{m}}^{M})+\sqrt{ h\,}\cdot\sigma(t_{m}, \bar{X}_{t_{m}}^{M}, \bar{\mu}_{t_{m}}^{M})Z_{m+1}, \quad \bar{X}^M_{0}=X_{0}\\
\label{defeulery}
\bar{Y}_{t_{m+1}}^{M}\!=\,\bar{Y}_{t_{m}}^{M}+ h\cdot b(t_{m}, \bar{Y}_{t_{m}}^{M}, \,\bar{\nu}_{t_{m}}^{M})+\sqrt{ h\,}\cdot\,\theta(t_{m}, \bar{Y}_{t_{m}}^{M}, \,\bar{\nu}_{t_{m}\,}^{M})Z_{m+1}, \quad \; \bar{Y}^M_{0}=Y_{0}
\end{align}
where 
for every $m=0, \ldots, M$, $\bar{\mu}_{t_{m}}^{M}$ and $\bar{\nu}_{t_{m}}^{M}$ respectively denote the probability distribution of $\bar{X}_{t_{m}}^{M}$ and $\bar{Y}_{t_{m}}^{M}$.
Moreover, we classically define the {\em genuine} (or continuous time) Euler scheme $\bar{X}=(\bar{X}_{t}^{M})_{t\in[0, T]}$, $\bar{Y}=(\bar{Y}_{t}^{M})_{t\in[0, T]}$ as follows: for every $t\in[t_{m}, t_{m+1})$, 
\begin{align}\label{defeulercontinuousx}
\bar{X}_{t}^{M}\coloneqq\bar{X}_{t_{m}}^{M}+{\color{black}b(t_{m}, \bar{X}_{t_{m}}^{M}, \bar{\mu}_{t_{m}}^{M})}(t-t_{m})+\sigma(t_{m}, \bar{X}_{t_{m}}^{M}, \bar{\mu}_{t_{m}}^{M})(B_{t}-B_{t_{m}}), \\
\label{defeulercontinuousy}
\bar{Y}_{t}^{M}\,\coloneqq\,\bar{Y}_{t_{m}}^{M}+  {\color{black}b(t_{m}, \bar{Y}_{t_{m}}^{M}, \,\bar{\nu}_{t_{m}}^{M})}(t-t_{m})+\theta(t_{m}, \bar{Y}_{t_{m}}^{M}, \,\bar{\nu}_{t_{m}}^{M})(B_{t}-B_{t_{m}}).
\end{align}
When there is no ambiguity, we write $\bar{X}_{m}$ and $\bar{X}_{t}$ instead of $\bar{X}^{M}_{t_{m}}$ and $\bar{X}^{M}_{t}$ to simplify the notation. 

The value $p\in[2, +\infty)$ in Assumption~\ref{AssumptionI} such that $\vertii{X_{0}}_{p}\vee \vertii{Y_{0}}_{p}<+\infty$ and in the Lipschitz condition~\eqref{assumplip} is crucial for 
the moment controls of the processes $X$, $Y$, $(\bar{X}_{t})_{t\in[0, T]}$ and $(\bar{Y}_{t})_{t\in[0, T]}$ and the $L^{p}$-strong convergence result for the continuous Euler scheme (\ref{defeulercontinuousx}) and (\ref{defeulercontinuousy}). For convenience, we state the following proposition only for $X$ and  $(\bar{X}_{t})_{t\in[0, T]}$ but the results remain true for $Y$ and $(\bar{Y}_{t})_{t\in[0, T]}$. The proof of Proposition~\ref{cvgeuler} is postponed to Appendix~\ref{appn}.
\begin{prop}\label{cvgeuler}
Assume Assumption~\ref{AssumptionI} is in force. 
\begin{enumerate}[$(a)$]
\item There exists a constant $C$ depending on $p, d, \sigma, \theta, T, L$ such that, for every $t\in[0, T]$ and for every $M\geq 1$, 
\begin{equation}\label{boundedx}
\vertii{\sup_{u\in[0, t]}\left|X_{u}\right|}_{p}\vee \vertii{\sup_{u\in[0, t]}\left|\bar{X}_{u}^{M}\right|}_{p}\leq C(1+\vertii{X_{0}}_{p}).
\end{equation}
Moreover, there exists a constant $\kappa$ depending on $L, b, \sigma, \left\Vert X_{0}\right\Vert_{p}, p, d, T$ such that for any $s,t\in[0, T]$, $s\le t$, 
\[\forall\, \, M\geq 1, \quad\left\Vert \bar{X}^{M}_{t}-\bar{X}^{M}_{s}\right\Vert_{p}\vee\left\Vert X_{t}-X_{s}\right\Vert_{p}\leq \kappa\sqrt{t-s}.\]

\item There exists a constant $\widetilde{C}$ depending on $p, d, T, L, \tilde{L}, \rho, \vertii{X_{0}}_{p}$ such that 
\[\Big\Vert\sup_{t\in[0, T]}\left|X_{t}-\bar{X}^{M}_{t}\right|\Big\Vert_{p}\leq \widetilde{C} h^{\frac{1}{2}\land \rho}. \]
\end{enumerate}
\end{prop}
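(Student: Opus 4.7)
\textbf{Plan for Proposition~\ref{cvgeuler}.} The proof is a fairly standard BDG $+$ Gr\"onwall argument adapted to the McKean-Vlasov setting, where one systematically controls the Wasserstein terms $\mathcal{W}_p(\mu,\delta_0)$ and $\mathcal{W}_p(\mu_t,\bar\mu_t^M)$ by the appropriate $L^p$ norms of a coupling given by the processes themselves.

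For part $(a)$, first note that Assumption~\ref{AssumptionI} (applied at $x=0$, $\mu=\delta_0$, together with the H\"older-in-$t$ control using $t=0$) yields the linear growth estimate
\[
|b(t,x,\mu)|\vee\vertiii{\sigma(t,x,\mu)}\vee\vertiii{\theta(t,x,\mu)}\le C_0\big(1+|x|+\mathcal{W}_p(\mu,\delta_0)\big)
\]
for a constant $C_0=C_0(T,L,\widetilde L,b,\sigma,\theta)$. Applying the BDG inequality to the continuous-time Euler scheme~\eqref{defeulercontinuousx} and Jensen's inequality to the drift integral gives, for every $t\in[0,T]$,
\[
\EE\sup_{u\in[0,t]}|\bar X^M_u|^{p}\le C_p\Big(\vertii{X_0}_p^{p}+\int_0^t \EE\big(1+|\bar X^M_{\underline s}|^{p}+\mathcal{W}_p(\bar\mu^M_{\underline s},\delta_0)^{p}\big)ds\Big),
\]
where $\underline s=t_m$ for $s\!\in[t_m,t_{m+1})$. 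Using $\mathcal{W}_p(\bar\mu^M_{\underline s},\delta_0)^{p}=\EE|\bar X^M_{\underline s}|^{p}\le \EE\sup_{u\le s}|\bar X^M_u|^{p}$, Gr\"onwall's lemma yields the uniform-in-$M$ bound~\eqref{boundedx}; the same argument works for $X$. The increment bound $\vertii{\bar X^M_t-\bar X^M_s}_p\vee\vertii{X_t-X_s}_p\le \kappa\sqrt{t-s}$ then follows from another direct BDG estimate on the increments $\bar X^M_t-\bar X^M_s$ (resp.\ $X_t-X_s$), now using the already-established moment bound to dominate the integrands.

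For part $(b)$, set $e_t\coloneqq X_t-\bar X^M_t$. Write
\[
e_t=\int_0^t\!\big[b(s,X_s,\mu_s)-b(\underline s,\bar X^M_{\underline s},\bar\mu^M_{\underline s})\big]\,ds+\int_0^t\!\big[\sigma(s,X_s,\mu_s)-\sigma(\underline s,\bar X^M_{\underline s},\bar\mu^M_{\underline s})\big]\,dB_s
\]
and split each bracket into a \emph{time-regularity} part $b(s,\cdot,\cdot)-b(\underline s,\cdot,\cdot)$, bounded by $\widetilde L(1+|X_s|+\mathcal{W}_p(\mu_s,\delta_0))h^{\rho}$, plus a \emph{Lipschitz} part, bounded by $L\big(|X_s-\bar X^M_{\underline s}|+\mathcal{W}_p(\mu_s,\bar\mu^M_{\underline s})\big)$. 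Using part $(a)$, the first term contributes $O(h^{\rho})$ after taking the $L^p$ norm. For the Lipschitz part, apply the triangle inequalities
\[
|X_s-\bar X^M_{\underline s}|\le |X_s-X_{\underline s}|+|X_{\underline s}-\bar X^M_{\underline s}|,\qquad
\mathcal{W}_p(\mu_s,\bar\mu^M_{\underline s})\le \vertii{X_s-X_{\underline s}}_p+\vertii{X_{\underline s}-\bar X^M_{\underline s}}_p,
\]
where the first summands are $O(\sqrt h)$ by part $(a)$, and $\vertii{X_{\underline s}-\bar X^M_{\underline s}}_p\le \big\Vert\sup_{u\le \underline s}|X_u-\bar X^M_u|\big\Vert_p$. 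After applying BDG to the martingale part and Minkowski/Jensen to the drift part, and raising to the power $p$, one obtains
\[
\varphi(t)\coloneqq \EE\sup_{u\in[0,t]}|e_u|^{p}\le C_1\, h^{p(\tfrac12\wedge\rho)}+C_2\int_0^t\varphi(s)\,ds,
\]
and Gr\"onwall's lemma delivers the announced rate $h^{\frac12\wedge\rho}$.

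The main technical point---and the only place where McKean-Vlasov differs from the classical Brownian diffusion case---is the handling of the $\mathcal{W}_p$ term: one must be careful that the bound $\mathcal{W}_p(\mu_{\underline s},\bar\mu^M_{\underline s})\le \vertii{X_{\underline s}-\bar X^M_{\underline s}}_p$ uses the same coupling given by the processes $X$ and $\bar X^M$ defined on a common probability space, so that the $L^p$ norm of the pathwise error appears on both sides of the inequality and Gr\"onwall closes the loop. Apart from this, all estimates are routine and the constants depend only on the quantities listed in the statement.
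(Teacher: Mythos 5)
Your proposal is correct and follows essentially the same route as the paper: linear growth from Assumption~\ref{AssumptionI}, BDG plus (generalized) Minkowski estimates, domination of the Wasserstein terms by the synchronous coupling $\mathcal{W}_p(\mu_s,\bar\mu^M_{\underline s})\le\Vert X_s-\bar X^M_{\underline s}\Vert_p$, and a Gr\"onwall closure, with the increment bound and part $(b)$ handled exactly as in the paper's Appendix~A. The only differences are technical rather than conceptual: the paper works with $L^p$-norms and its ``\`a la Gronwall'' Lemma~\ref{Gronwall} (which accommodates the $\big(\int_0^t f^2\big)^{1/2}$ term coming from BDG) instead of your $p$-th power formulation, treats the bound on $X$ by freezing the flow ($\tilde b(t,x)=b(t,x,\mu_t)$, $\tilde\sigma(t,x)=\sigma(t,x,\mu_t)$) so as to quote the classical SDE estimate, and explicitly establishes the a priori finiteness of $\sup_{t\le T}\Vert\bar X^M_t\Vert_p$ for fixed $M$ before invoking Gr\"onwall, a point your sketch leaves implicit.
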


\subsection{Comments on Assumption~\ref{AssumptionII}}\label{cii}




 Assumption~\ref{AssumptionII} contains technical conditions. The drift $b$ is assumed to be affine and Lipschitz continuous in $x$, i.e. $b$ has the following form 
\begin{equation}\label{form1}
b(t, x, \mu)=\alpha(t)x+\beta(t, \mu). 
\end{equation}
In fact, Jensen's inequality implies that for every $\mu\in\mathcal{P}_{1}(\RD)$, $\delta_{\int \xi \mu(d\xi)}\conright \mu$ as for every convex function $f$, $f\big(\int \xi \mu(d\xi)\big)\leq\int f(\xi)\mu(d\xi)$. Hence the condition in $(\ref{oldb})$ implies $b(t, x, \mu)=b(t, x, \delta_{\int \xi \mu(d\xi)})$ so that the drift (\ref{form1}) is equivalent to the following drift
\begin{equation}\label{baffine}
\color{black} \tilde{b}(t, x, \mu)=\alpha(t)x+\tilde{\beta}\Big(t, \int_{\RD}\xi\mu(d\xi)\Big) \vspace{-0.1cm}
\end{equation}
with $\tilde{\beta}\big(t, \int_{\RD}\xi\mu(d\xi)\big)\coloneqq \beta(t, \delta_{\int \xi \mu(d\xi)})=\beta(t, \mu)$. 
 
\smallskip
Now we give necessary and sufficient conditions and a criterion based on  the linear functional derivative  to establish  monotonicity with respect to the convex order of a function $\Phi(\mu)$, as  it appears in  Assumption~\ref{AssumptionII}-(3). We will consider the case of probability measures on ${\cal P}_2(\RD)$ fro simplicity but what follows can be straightforwardly adapted to adapted to ${\cal P}_p(\RD)$ for a $p\!\in [1, +\infty)$. The proof of the following proposition is postponed to Appendix~\ref{proofpp}.

\begin{prop}\label{pp} Let $\Phi : \big(\mathcal{P}_{2}(\RD),\mathcal{W}_{2}\big)\rightarrow \RR$ be a continuous function. 

\noindent $(a)$   $\Phi$ is non-decreasing with respect to the convex order if and only if, for every $\mu,\nu\in\mathcal{P}_{2}(\RD),\;\mu\conright\nu$, 
\[
\liminf_{\varepsilon\rightarrow0^{+}} \frac{\Phi\big(\mu+\varepsilon (\nu-\mu)\big)-\Phi (\mu)}{\varepsilon}\geq 0.
\]
\noindent $(b)$ {\em Characterization when  $\Phi$ is smooth}.  Assume $\Phi: \big(\mathcal{P}_{2}(\RD), {\cal W}_2\big)\rightarrow \RR$ is linearly functionally differentiable with linear functional derivative $\frac{\delta\Phi}{\delta m}$ defined on $\mathcal{P}_{2}(\RD)\times \RD$ in the sense of~\cite[Definition~5.43]{MR3752669}~(\footnote{i.e.  $\frac{\delta\Phi}{\delta m}(\mu)(x)$ is jointly  continuous  in $(\mu,x)$ and,  for any ${\cal W}_2$-bounded subset ${\cal K} \subset {\cal P}_2(\RD)$, $x\mapsto \frac{\delta\Phi}{\delta m}(\mu)(x)$ has   at most quadratic growth in $x$ uniformly in  $\mu\!\in {\cal K}$, and satisfies
\[
\Phi(\mu')-\Phi(\mu) = \int_0^1\int_{\RD}\frac{\delta\Phi}{\delta m}(t\mu'+(1-t)\mu)(x)d(\mu'-\mu)(x)dt.
\]
Note that such a quantity is defined up to a real constant (not depending upon $x$).}). 
 Then,  the following conditions are equivalent. 
\begin{enumerate}[$(i)$]
\item The function $\Phi$ is non-decreasing w.r.t. the convex order on $\mathcal{P}_{2}(\RD)$.
\item For every $\mu, \nu \in \mathcal{P}_{2}(\RD)$ with $\mu\conright\nu$, $\displaystyle \int_{\RD}\frac{\delta \Phi}{\delta m}(\mu)(x)d(\nu-\mu)(x)\geq 0$.
\end{enumerate}
$(c)$ {\em A convexity based criterion.} In particular, if, for every $\mu \!\in \mathcal{P}_{2}(\RD)$, $x \mapsto \frac{\delta \Phi}{\delta m}(\mu)(x)$ is convex, then $\Phi$ is non-decreasing for  the convex ordering on $\mathcal{P}_{2}(\RD)$.
\end{prop}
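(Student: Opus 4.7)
The plan is to prove part (a) first, then deduce (b) by combining (a) with the integral representation of $\Phi$ given by the linear functional derivative, and finally obtain (c) as a one-line corollary of (b). For the \emph{only if} direction of (a), suppose $\Phi$ is non-decreasing for the convex order and $\mu\conright\nu$. A direct test against any convex $\varphi$ shows that, for every $\varepsilon\in[0,1]$, the convex combination $(1-\varepsilon)\mu+\varepsilon\nu$ dominates $\mu$ in the convex order, hence $\Phi\bigl(\mu+\varepsilon(\nu-\mu)\bigr)\geq \Phi(\mu)$, so the $\liminf$ is non-negative.

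For the \emph{if} direction, I fix $\mu\conright\nu$ and consider the real-valued function $\psi:[0,1]\to\mathbb{R}$ defined by $\psi(\varepsilon)\coloneqq\Phi\bigl((1-\varepsilon)\mu+\varepsilon\nu\bigr)$; its continuity follows from the $\mathcal{W}_{2}$-continuity of $\Phi$ together with the elementary fact that $\varepsilon\mapsto(1-\varepsilon)\mu+\varepsilon\nu$ is $\mathcal{W}_{2}$-continuous. For any $\varepsilon_{0}\!\in[0,1)$, setting $\mu_{\varepsilon_0}\coloneqq(1-\varepsilon_0)\mu+\varepsilon_0\nu$ one has $\mu_{\varepsilon_0}\conright\nu$ and the algebraic identity
\[
\mu_{\varepsilon_0+\delta}\;=\;\mu_{\varepsilon_0}+\tfrac{\delta}{1-\varepsilon_{0}}\bigl(\nu-\mu_{\varepsilon_0}\bigr),\qquad 0<\delta\leq 1-\varepsilon_{0}.
\]
Applying the assumed $\liminf$ condition to the pair $(\mu_{\varepsilon_0},\nu)$ gives $\liminf_{\delta\to 0^{+}}\frac{\psi(\varepsilon_0+\delta)-\psi(\varepsilon_0)}{\delta}\geq 0$. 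A continuous function on $[0,1]$ whose lower right Dini derivative is everywhere non-negative is non-decreasing (a classical consequence of the Dini mean-value theorem), whence $\psi(0)\leq\psi(1)$, i.e.\ $\Phi(\mu)\leq\Phi(\nu)$. I expect this Dini-type ``monotone lifting'' step to be the delicate part of the argument.

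For part (b), inserting $\mu'=\mu+\varepsilon(\nu-\mu)$ in the defining integral formula of the linear functional derivative and simplifying yields
\[
\frac{\Phi\bigl(\mu+\varepsilon(\nu-\mu)\bigr)-\Phi(\mu)}{\varepsilon} \;=\; \int_{0}^{1}\!\!\int_{\mathbb{R}^{d}}\frac{\delta \Phi}{\delta m}\bigl(\mu+t\varepsilon(\nu-\mu)\bigr)(x)\,d(\nu-\mu)(x)\,dt.
\]
As $\varepsilon\to 0^{+}$, joint continuity of $\frac{\delta\Phi}{\delta m}$ and its at-most-quadratic growth in $x$, uniform over the $\mathcal{W}_{2}$-bounded set $\{\mu+t\varepsilon(\nu-\mu):t,\varepsilon\in[0,1]\}$, combined with $\int|x|^{2}\,d|\nu-\mu|(x)<+\infty$, allow dominated convergence to pass the limit inside both integrals, yielding $\int_{\mathbb{R}^{d}}\frac{\delta\Phi}{\delta m}(\mu)(x)\,d(\nu-\mu)(x)$. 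Coupling this limit with part (a) gives the equivalence $(i)\Leftrightarrow(ii)$. Finally, (c) is immediate: when $x\mapsto\frac{\delta\Phi}{\delta m}(\mu)(x)$ is convex and $\mu\conright\nu$, the very definition of convex order yields $\int\frac{\delta\Phi}{\delta m}(\mu)\,d\mu\leq\int\frac{\delta\Phi}{\delta m}(\mu)\,d\nu$, i.e.\ condition $(ii)$ of (b), and monotonicity follows.
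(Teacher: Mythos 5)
Your proof is correct, and parts (b) and (c) — the computation of the directional derivative via the defining integral formula with $\mu'=\mu+\varepsilon(\nu-\mu)$, the dominated convergence argument using joint continuity and the quadratic growth bound uniform over the $\mathcal{W}_{2}$-bounded segment $\{\mu+s(\nu-\mu):s\in[0,1]\}$, and the reduction of (c) to condition $(ii)$ — follow the paper's proof essentially line by line, as does the easy direction of (a). Where you differ is the converse direction of (a): you establish that the lower right Dini derivative of $\psi(\varepsilon)=\Phi\big((1-\varepsilon)\mu+\varepsilon\nu\big)$ is non-negative at \emph{every} $\varepsilon_0\in[0,1)$ (using, as the paper does in its Lemma C.1, that $\mu_{\varepsilon_0}\conright\nu$ and the affine reparametrization $\mu_{\varepsilon_0+\delta}=\mu_{\varepsilon_0}+\tfrac{\delta}{1-\varepsilon_0}(\nu-\mu_{\varepsilon_0})$), and then invoke the classical theorem that a continuous function with non-negative lower right Dini derivative is non-decreasing. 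The paper instead avoids citing that theorem: it introduces the auxiliary function $\widetilde\Phi(\varepsilon)=\Phi(\mu_\varepsilon)-\Phi(\mu)-\varepsilon\big(\Phi(\nu)-\Phi(\mu)\big)$, which vanishes at $0$ and $1$, picks a maximizer $\varepsilon_0\in[0,1)$, and applies the $\liminf$ hypothesis only at that single point to force $\Phi(\nu)\geq\Phi(\mu)$ — a self-contained Rolle-type argument that reproves the special case of the Dini lemma it needs. Your route is slightly shorter at the cost of an external (but entirely standard) real-analysis reference; the paper's is fully self-contained and needs the hypothesis at only one pair of measures. Both are valid, and your scaling step (the factor $\tfrac{1}{1-\varepsilon_0}>0$ does not affect the sign of the $\liminf$) is handled correctly.
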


\begin{rem}  The converse of the above criterion i.e. $(b)$ implies the convexity of $\frac{\delta \Phi}{\delta m}(\mu)(x)$ in $x$ for every $\mu$ seems  not clear although we have no obvious counterexample.
\end{rem}
\begin{exple} $(a)$ Elementary  examples of such monotonic functions $\Psi$  on ${\cal P}_2(\RD)$ for convex ordering are functions of the form
\[
\Psi(\mu) = \chi\left( \int_{\RD} \psi(\xi)\mu(d\xi)\right)
\]
where $\psi:\RD\to \RR$ is a convex function with at most quadratic growth at infinity and $\chi:\RR\to \RR$ is nondecreasing.  In one dimension, typical examples of  such  functions $\psi$ are $\psi(\xi)= \EE\, [(aZ+b\xi)^+]^2$, $Z^+\!\in L^2$, $a\!\in \RR_+$, $b\!\in \RR$ or $\psi(x)=\EE\, |aZ+b\xi|^\gamma$, $1\le \gamma \le 2$ with $Z\!\in L^\gamma$, $a,\,b\!\in \RR$. Any positive linear combination  of such functions $\Psi$ is of course still non-decreasing for the convex ordering. This leads to consider the more general family of functions 
\begin{equation}\label{eq:2.8}
\Psi(\mu) = \int_{E}\chi\left( \int_{\RD} \psi(\xi,u)\mu(d\xi)\right)\pi(du)
\end{equation}
where $\pi$ is a non-negative $\sigma$-finite measure on a measure space $(E, {\cal E})$, $\psi(\cdot,u)$ is convex with quadratic growth $|\psi(x,u)|\le \kappa(u)(1+|x|^2)$,  $u\!\in E$,   such that  $\chi:\RR\to\RR$ is non-decreasing and $\int_E\Big|\chi\Big(\big(1+\int \!|\xi|^2\mu(d\xi)\big)\kappa(u)\big) \Big|d\pi (u)<+\infty$. 

Note that if $\chi$ in~\eqref{eq:2.8} is continuously differentiable, one has, under appropriate integrability conditions not detailed here, 
\[
\frac{\delta \Psi}{\delta \,m}(\mu)(x)=  \int_{E}\chi'\left( \int_{\RD} \psi(\xi,u)\mu(d\xi)\right)\psi(x,u)\pi(du)
\]
 which is clearly a convex function in $x$ for every distribution $\mu$ since $\chi'\ge 0$.
 
 \smallskip
 \noindent $(b)$   Let $W:\RD \to \RR$ be a convex function  with at most quadratic growth at infinity. Then the function $\Phi$ defined on ${\cal P}_2(\RD)$ by
 \[
 \Phi(\mu) =\tfrac 12 \int_{\RD}\int_{\RD}W(x-y)\mu(dx)\mu(dy)
 \]
is well defined and  non decreasing for convex ordering. This can be easily checked  directly since both $W(x-\cdot)$ and $W(\cdot- y)$ are convex functions. Nevertheless, one can  also check   that its linear functional derivative is given by 
\[
\frac{\delta \Psi}{\delta \,m}(\mu)(x) = \tfrac 12  \int_{\RD} \big(W(x-y)+ W(y-x)\big)\mu(dy),
\]
and is convex in $x$ for every $\mu \! \in {\cal P}_2(\RD)$.
\end{exple}
\section{Applications}\label{appli}
This section contains two applications of the main results. 

\subsection{Application:  convex partitioning and convex bounding} \label{application}

Theorem~\ref{mainthmconv} and  Theorem~\ref{mainthmconvdual} show that we can \textit{upper and lower bound}, with respect to the functional convex order, a scaled McKean-Vlasov process by two scaled McKean-Vlasov processes satisfying Assumption~\ref{AssumptionII}-(1), (2), (3), or we can \textit{separate}, with respect to the functional convex order, two scaled McKean-Vlasov processes by a scaled McKean-Vlasov process satisfying Assumption~\ref{AssumptionII}-(1), (2), (3). That is, if we consider the following scaled McKean-Vlasov equations satisfying Assumption~\ref{AssumptionI}
\begin{align}
&dX^{\sigma_{1}}_{t}=\big(a(t) X^{\sigma_{1}}_{t}+\beta(t,\,\EE X^{\sigma_{1}}_{t})\big)dt+\sigma_{1}(t, X^{\sigma_{1}}_{t}, \mu^{\sigma_{1}}_{t})dB_{t}, \;\;\;X^{\sigma_{1}}_{0}\in L^{p}(\mathbb{P}),\nonumber\\
&dY^{\theta_{1}}_{t}\,=\big(a(t)\,Y^{\theta_{1}}_{t}\,+\beta(t,\,\EE Y^{\theta_{1}}_{t}\,)\big)dt+\theta_{1}(t, \,Y^{\theta_{1}}_{t}, \,\nu^{\theta_{1}}_{t})\,dB_{t}, \;\;\;Y^{\theta_{1}}_{0}\in L^{p}(\mathbb{P}),\nonumber\\
&dX^{\sigma_{2}}_{t}=\big(a(t) X^{\sigma_{2}}_{t}+\beta(t,\,\EE X^{\sigma_{2}}_{t})\big)dt+\sigma_{2}(t, X^{\sigma_{2}}_{t}, \mu^{\sigma_{2}}_{t})dB_{t}, \;\;\;X^{\sigma_{2}}_{0}\in L^{p}(\mathbb{P}),\nonumber\\
&dY^{\theta_{2}}_{t}\,=\big(a(t)\,Y^{\theta_{2}}_{t}\,+\beta(t,\,\EE Y^{\theta_{2}}_{t})\big)dt+\theta_{2}(t, \,Y^{\theta_{2}}_{t}, \,\nu^{\theta_{2}}_{t})\,dB_{t}, \;\;\;Y^{\theta_{2}}_{0}\in L^{p}(\mathbb{P}),\nonumber
\end{align}
and if $\sigma_{1}$ and $\sigma_{2}$ satisfy Assumption~\ref{AssumptionII}-(2), (3), $X^{\sigma_{1}}_{0}\preceq_{\,cv}Y^{\theta_{1}}_{0}\preceq_{\,cv}X^{\sigma_{2}}_{0}\preceq_{\,cv}Y^{\theta_{2}}_{0}$ and 
\begin{equation}\label{relation}
\sigma_{1}\preceq \theta_{1}\preceq\sigma_{2}\preceq \theta_{2},
\end{equation} 
then we have the following two types of inequalities:

$-$  Convex bounding
\begin{equation}
\label{bounding}
\;\;\begin{cases}
\EE F(X^{\sigma_{1}}) \leq\EE F(Y^{\theta_{1}})\leq \EE F(X^{\sigma_{2}}), \\
 \EE G\big(X^{\sigma_{1}}, (\mu^{\sigma_{1}}_{t})_{t\in[0, T]}\big)\leq\EE G\big(Y^{\theta_{1}}, (\nu^{\theta_{1}}_{t})_{t\in[0, T]}\big)\leq \EE G\big(X^{\sigma_{2}}, (\mu_{t}^{\sigma_{2}})_{t\in[0, T]}\big),\end{cases}
\end{equation}

$-$  Convex partitioning
\begin{equation}
\label{partitioning}
\begin{cases}
\EE F(Y^{\theta_{1}})\leq \EE F(X^{\sigma_{2}})\leq\EE F(Y^{\theta_{2}}),\\
\EE G\big(Y^{\theta_{1}}, (\nu^{\theta_{1}}_{t})_{t\in[0, T]}\big)\leq \EE G\big(X^{\sigma_{2}}, (\mu^{\sigma_{2}}_{t})_{t\in[0, T]}\big)\leq \EE G\big(Y^{\theta_{2}}, (\nu^{\theta_{2}}_{t})_{t\in[0, T]}\big)\end{cases}
\end{equation}
\noindent for any two applications $F$ and $G$ satisfying conditions of Theorem~\ref{mainthmconv}. 


Remark that Assumption~\ref{AssumptionII}-(2) and (3) are only made on $\sigma$ of the equation of $X$. Consequently, in this application, we can choose two ``simple'' functions $\sigma_{i}, \;i=1,2$ to construct the convex partitioning and convex bounding. For example, we can choose two convex functions $\sigma_{i}, \;i=1,2$  which do not depend on $\mu$ and satisfy (\ref{relation}). In this a case, the results in (\ref{bounding}) and (\ref{partitioning}) make a comparison between  a McKean-Vlasov equation and  regular Brownian diffusions, the latter ones being much easier to simulate.


\subsection{Application in stochastic control problem and mean field games} \label{application2}

In this subsection, we give two examples to explain how to apply Theorem~\ref{mainthmconv} and Theorem~\ref{mainthmconvdual} in the framework of the stochastic control problem and mean field games. 

\smallskip
$\blacktriangleright\;$The construction of this first example for the stochastic control problem is  based on the dynamic and  cost function in \cite[Section 4]{MR3045029} but the same idea can be applied to other similar examples. 
Consider the following two one-dimensional McKean-Vlasov dynamics: 
\begin{align}
&dX^{x, \alpha}_{t}=\big[a_{t}X^{x, \alpha}_{t}+\bar{a}_{t}\EE X^{x, \alpha}_{t}+b_t\alpha_t+\beta_t\big]dt+\sigma dB_t, \quad X_0=x\in\RR\nonumber\\
&dY^{x, \alpha}_{t}\,=\big[a_{t}Y^{x, \alpha}_{t}+\bar{a}_{t}\EE Y^{x, \alpha}_{t}+b_t\alpha_t+\beta_t\big]dt+\theta(t, Y^{x, \alpha}_t, \nu_t)\, dB_t,\quad Y_0=x\nonumber
\end{align}
equipped with the respective cost function $\mathcal{J}^{X}$ and $\mathcal{J}^{Y}$ defined by 
\begin{align}
\mathcal{J}^{X}(x, \alpha)&=\EE \left[\int_{0}^{T}\Big[ \frac{1}{2}\big(m_t X^{x, \alpha}_t +\bar{m}_t \EE X^{x, \alpha}_t\big)^{2}+\frac{1}{2}n_t \alpha_{t}^{2}\Big]dt +\frac{1}{2}\big( qX^{x, \alpha}_T +\bar{q}\,\EE \! X^{x, \alpha}_T\big)^{2}\right]\nonumber\\
\mathcal{J}^{Y}(x, \alpha)&=\EE \left[\int_{0}^{T}\Big[ \frac{1}{2}\big(m_t Y^{x, \alpha}_t\, +\bar{m}_t \EE Y^{x, \alpha}_t\,\big)^{2}+\frac{1}{2}n_t \alpha_{t}^{2}\Big]dt +\frac{1}{2}\big( qY^{x, \alpha}_T\, +\bar{q}\,\EE Y^{x, \alpha}_T\,\big)^{2}\right]\nonumber
\end{align}
with   admissible controls  $\alpha=(\alpha_t)_{t\in[0, T]}$ (see~\cite[Section~2]{MR3045029}, where for every $t\in[0, T]$, $\nu_t$ is the probability distribution of $Y^{x, \alpha}_t$, $a_t, \bar{a}_t, b_t, \beta_t, m_t, \bar{m}_t, n_t$ are deterministic $\rho$-H\"older continuous functions of $t\in[0, T]$, $q$ and $\bar{q}$ are deterministic constants and the function $t\mapsto n_t$ is positive. Assume moreover that $\theta$ satisfies Assumption~\ref{AssumptionI} and
\[
\forall\, (t, x, \mu)\in [0, T]\times \RD\times \mathcal{P}_{2}(\RD),\quad 0\le \theta(t, x, \mu)\leq \sigma.
\]
We know from \cite[Proposition 4.1 and the remark that follows]{MR3045029} that the optimal control   minimizing $\mathcal{J}^{X}(x, \cdot)$ has a closed form  given by
\begin{equation}\label{oc}
\alpha^{*,X}(t,X_t)= -\frac{b_t}{n_t}\eta_t X_t-\frac{b_t}{n_t}\chi_t
\end{equation}
where $(\eta_t)_{0\leq t\leq T}$ and $(\chi_t)_{0\leq t\leq T}$ are determined by solving a Riccati equation depending only on $b_t, n_t, a_t, \bar{a}_t, m_t, \bar{m}_t, \beta_t$, $q$ and $\bar{q}$ (see (57) still in~\cite{MR3045029}). If the coefficient $\frac{b_t}{n_t}\eta_t $ and $\frac{b_t}{n_t}\chi_t$ in~\eqref{oc} are still $\rho$-H\"older continuous, then Theorem~\ref{mainthmconvdual} directly implies  
 \textcolor{black}{\begin{equation}\label{oj}
\inf_{\alpha}\mathcal{J}^{X}(x, \alpha)\!=\! \mathcal{J}^{X}\big(x, (\alpha^{*,X}(t,X_t))_{t\in [0,T]}\big)\!\geq\! \mathcal{J}^{Y}(x,  (\alpha^{*,X}(t,Y_t))_{t\in [0,T]})\!\geq\! \inf_{\alpha}\mathcal{J}^{Y}(x, \alpha).
\end{equation}
}
Consequently, if we define the value function by 
\[v^{X}(x)\coloneqq \inf_{\alpha}\mathcal{J}^{X}(x, \alpha) \quad \text{and}\quad v^{Y}(x)\coloneqq \inf_{\alpha}\mathcal{J}^{Y}(x, \alpha),\]
then~\eqref{oc} directly implies $v^{X}(x)\geq v^{Y}(x)$.   \textcolor{black}{Moreover, it follows from Corollary~\ref{cormain} that the function $x\mapsto v^X(x)$ is convex. 
}

$\blacktriangleright\;$ Similarly, in the framework of mean field games, a McKean-Vlasov equation appears when the number of agents tends to infinity. If we consider the following one-dimensional (limiting) dynamics 
\[dX_t=\alpha_t dt + \sigma (\mu_t)dB_t, \quad X_0\in L^{p}(\mathbb{P}),\; p\geq 2\]
where $\sigma \ge 0$ is ${\cal W}_p$-Lipschitz continuous and $(\mu_t)_{t\in[0,T]}$ is a flow of square integrable probability meausres on $\RR$,  associated with the following cost function 
\[\mathcal{J}^{X}(\alpha)=\mathbb{E}\left[\frac{1}{2}\Big|c_g X_T + g\Big(\int x \mu_{T}(dx)\Big)\Big|^{2}+\int_0^T \Big[\frac{1}{2}\big|c_f X_t + f\Big(\int x \mu_{t}(dx)\Big)\big|^{2}+\frac{1}{2}\left|\alpha_t\right|^{2}\Big]dt\right].
\]
Theorems in this paper may provide some information on the value of the game. 
Assume that the Lasry-Lions monotonicity condition is satisfied, then the equilibrium of  the mean field game is unique with a McKean-Vlasov  dynamics ($\mu_t$ is the distribution of $X_t$) given by 
\begin{equation}\label{sys1}
dX_t=-\big[ \eta_t X_t + h\big(t, \EE X_t \big)\big]dt + \sigma (\mu_t)dB_t
\end{equation}
by taking $\alpha_t=-\big[\eta_t X_t + h\big(t, \mathbb{E}X_t\big)\big]$ (see e.g. \cite[Section 3.5.2]{MR3753660}). In~\eqref{sys1}, $(\eta_t)_{t\in[0, T]}$ is the solution of a Riccati equation depending only on $c_f$ and $c_g$ and $h$ depends on $f, g, c_f$ and $c_g$. If $\sigma$ is non-decreasing w.r.t. the convex order like in the previous example, results in this paper allow us to compare the value of the cost function $\mathcal{J}(\alpha)$ of the system~\eqref{sys1} with that of  the following system $(Y_t)_{t\in[0, T]}$ 
\begin{equation}\label{sys2}
dY_t=-\big[ \eta_t Y_t + h\big(t, \EE Y_t \big)\big]dt + \theta (\nu_t)dB_t, \quad Y_0\in L^{p}(\mathbb{P}),\; p\geq2,
\end{equation}
when 
$$
0\le \sigma \le \theta \quad\mbox{ and }\quad X_0\conright Y_0.
$$ In fact, if we denote by $\nu_t$ the probability distribution of $Y_t$ for every $t\in[0, T]$, by applying Theorem~\ref{mainthmconv} and~\eqref{equalE}, we know that $\mu_t \preceq \nu_t$ for every $t\!\in [0,T]$ so that $\EE X_t=\EE Y_t$. Thus $g\big(\int x \mu_{T}(dx)\big)=g\big(\int x \,\nu_{T}(dx)\big)$ and $f\big(\int x \mu_{t}(dx)\big)=f\big(\int x \,\nu_{t}(dx)\big),\; t\in[0, T]$. Let $\alpha \mapsto \mathcal{J}^{X}(\alpha)$ and $\alpha \mapsto \mathcal{J}^{Y}(\alpha)$ denote  the cost function for the system~\eqref{sys1} and~\eqref{sys2} respectively.  Then the difference between $\mathcal{J}^{X}(\alpha)$ and $\mathcal{J}^{Y}(\alpha)$ only depends on   terms containing $\EE [X_t^{2}]$, $\EE [Y_t^{2}]$, $t\!\in [0, T]$, that is,  on   the variances of $X_t$ and $Y_t$ since  they have the same mean.

\section{Convex order results for the Euler scheme}\label{eulerconv}

In this section, we will discuss the convex order results for the random variables $\bar{X}_{t_{m}}^{M}$ and $\bar{Y}_{t_{m}}^{M}, m=0, \ldots, M$ defined by the Euler scheme~(\ref{defeulerx}) and~(\ref{defeulery}). In order to simplify the notations, we rewrite~(\ref{defeulerx}) and~(\ref{defeulery}) by setting 
$$
\bar{X}_{m}\coloneqq \bar{X}_{t_{m}}^{M},\quad\bar{Y}_{m}\coloneqq \bar{Y}_{t_{m}}^{M},\quad\bar{\mu}_{m}\coloneqq \bar{\mu}_{t_{m}}^{M}\quad \mbox{and}\quad\bar{\nu}_{m}\coloneqq \bar{\nu}_{t_{m}}^{M}.
$$ It reads 
\begin{align}\label{EulerX}
&\bar{X}_{m+1}={\color{black}b_{m}(\bar{X}_{m}, \bar{\mu}_{m})}+\sigma_{m}(\bar{X}_{m}, \bar{\mu}_{m})Z_{m+1}, \quad \bar{X}_{0}=X_{0},\\
\label{EulerY}&\bar{Y}_{m+1}\,=\,{\color{black}b_{m}(\bar{Y}_{m}\,, \bar{\nu}_{m}\,)}+\,\theta_{m}(\bar{Y}_{m},\, \bar{\nu}_{m}\,)Z_{m+1}, \quad \;\bar{Y}_{0}=Y_{0},
\end{align}
where 
for every $m=0, \ldots, M$, 
\begin{align}\label{defbm}
{\color{black}b_m(x, \mu)\coloneqq x+h\cdot b(t_m, x, \mu)},\;\;
\sigma_{m}(x, \mu)\coloneqq \sqrt{ h}\cdot\sigma(t_{m}, x, \mu),\;\; \theta_{m}(x, \mu)\coloneqq \sqrt{ h}\cdot\theta(t_{m}, x, \mu).
\end{align}
\textcolor{black}{
First note that it follows from Proposition~\ref{cvgeuler}$(a)$ that
\[
\forall\, m=0,\ldots, M,\quad \bar \mu_m ,\; \bar \nu_m \! \in  \mathcal{P}_{p}(\RD)
\]
(hence lie in $\mathcal{P}_{1}(\RD)$).}  Then it follows from Assumption~\ref{AssumptionII} that $X_{0}$, $Y_{0}$, ${\color{black}b_{m}}, \sigma_{m}, \theta_{m}, m=0, \ldots, M,$ satisfy its discrete time counterpart. 
\begin{manualtheorem}{II$_{disc}$}\label{IIprime}
{\color{black}$(1)$ The function $b_m,\, m=0, \dots, M,$ are affine in $x$ and }
\begin{equation}\label{assmbm}
\color{black}\forall\, \;\mu, \nu\in\mathcal{P}_{p}(\RD), \;\mu\conright\nu\quad \quad b_{m}(x, \mu)=b_m(x, \nu).
\end{equation}
\vspace{-0.5cm}

\noindent$(2)$ The functions $\sigma_m,\; m=0, \dots, M,$ are convex in $x$ :
\begin{equation}\label{conm1}
\forall\, \, x,y\in\RD, \forall\lambda\in[0,1],\hspace{0.7cm}\sigma_{m}\big(\lambda x+(1-\lambda)y, \mu\big)\preceq\lambda\sigma_{m}(x, \mu)+(1-\lambda)\sigma_{m}(y, \mu).  
\end{equation}
$(3)$ The functions $\sigma_m,\, m=0, \dots, M,$ are non-decreasing in $\mu$ with respect to the convex order :
\begin{equation}\label{conm2}
\forall\, \, \mu, \nu\in\mathcal{P}_{p}(\RD), \;\mu\conright\nu, \quad\quad\quad \sigma_{m}(x, \mu)\preceq\sigma_{m}(x, \nu).
\end{equation}
$(4)$ We have the following order between $\sigma_m$ and $\theta_m$, $m=0, \dots, M$ :
\begin{equation}\label{conm3}
\forall(x, \mu)\in\mathbb{R}^{d}\times \mathcal{P}_{p}(\RD), \hspace{1.3cm} \sigma_{m}(x, \mu)\preceq\theta_{m}(x, \mu). 
\end{equation}
$(5)$ $\bar{X}_{0}\conright\bar{Y}_{0}$~(\footnote{Since $\bar{X}_{0}=X_{0}$ and $\bar{Y}_{0}=Y_{0}$ \big(see the definition~\eqref{defeulerx} and~\eqref{defeulery}\big).}).

\end{manualtheorem}

\textcolor{black}{At this stage let us mention that we will extensively use the following elementary  characterization of convex ordering between two integrable $\RD$-valued random variables or their distributions.}
\begin{lem}[Lemma A.1 in \cite{alfonsi:hal-01589581}]\label{onlylineargrowth}
Let $\mu, \nu\in\mathcal{P}_{1}(\RD)$. We have $\mu\conright\nu$ if and only if for every convex function $\varphi: \RD \rightarrow \RR$ with (at most)  linear growth in the sense that there exists a real constant $C>0$ such that, for every $x\!\in\RD$,  $|\varphi(x)|\le C(1+|x|))$,
\[
\int_{\RD}\varphi(x)\mu(dx)\leq\int_{\RD}\varphi(x)\nu(dx).
\]
\end{lem}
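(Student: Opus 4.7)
The ``only if'' direction is immediate, since every convex function with linear growth is a particular convex function and, for $\mu\!\in\mathcal{P}_1(\RD)$, $\int\varphi\,d\mu$ is finite whenever $\varphi$ has linear growth. The content of the lemma lies in the converse, and my plan is to approximate an arbitrary convex $\varphi:\RD\to\RR$ from below by a non-decreasing sequence of convex functions with linear growth and then pass to the limit.

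The natural candidate is the Moreau-Yosida (inf-convolution) regularization
\[
\varphi_n(x) \;:=\; \inf_{y\in\RD} \bigl\{\varphi(y) + n|x-y|\bigr\},\qquad n\ge 1.
\]
I would first establish three properties of $\varphi_n$, all standard in convex analysis. \emph{(i)} $\varphi_n$ is convex, as an inf-convolution of two convex functions, and $n$-Lipschitz by construction, hence of linear growth; the hypothesis then yields $\int \varphi_n\,d\mu \le \int \varphi_n\,d\nu$ for every $n$. \emph{(ii)} Any convex $\varphi:\RD\to\RR$ admits an affine minorant $\ell(x)=\langle a,x\rangle+b$ (pick any subgradient at a base point); for $n\ge |a|$, the estimate $\ell(x)\le \ell(y)+n|x-y|\le \varphi(y)+n|x-y|$ holds for every $y$, so $\varphi_n\ge \ell$. \emph{(iii)} $\varphi_n(x)\uparrow \varphi(x)$ pointwise as $n\to\infty$: the continuity of $\varphi$ (automatic for a real-valued convex function on $\RD$) and the affine lower bound force near-minimizers in the definition of $\varphi_n(x)$ to concentrate at $x$.

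The conclusion would then follow from monotone convergence. Since $\mu,\nu\!\in\mathcal{P}_1(\RD)$, the affine minorant $\ell$ is integrable under both measures, and the non-negative sequence $\varphi_n-\ell$ increases to $\varphi-\ell$, so
\[
\int_{\RD}\varphi_n\,d\mu \;\underset{n\to\infty}{\longrightarrow}\; \int_{\RD}\varphi\,d\mu \quad\text{and}\quad \int_{\RD}\varphi_n\,d\nu \;\underset{n\to\infty}{\longrightarrow}\; \int_{\RD}\varphi\,d\nu
\]
in $(-\infty,+\infty]$. Passing to the limit in the inequality at level $n$ produces $\int\varphi\,d\mu\le \int\varphi\,d\nu$; the case $\int\varphi\,d\nu=+\infty$ is, of course, trivial.

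The main obstacle is of a technical rather than conceptual nature: the careful verification of properties \emph{(i)}--\emph{(iii)} of the Moreau-Yosida regularization, and the bookkeeping to ensure that all integrals make sense in $(-\infty,+\infty]$ so that monotone convergence is legitimate. An alternative route would replace the inf-convolution by the pointwise supremum of a countable dense family of rational affine minorants of $\varphi$; this works equally well, but the inf-convolution is cleaner because the monotone increase in $n$ is built in.
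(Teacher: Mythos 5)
The paper does not prove this lemma at all: it is quoted verbatim from the cited reference (Lemma~A.1 in Alfonsi--Corbetta--Jourdain), so there is no internal proof to compare against. Your argument is correct and complete: the ``only if'' direction is indeed immediate, and for the converse the Moreau--Yosida regularization $\varphi_n(x)=\inf_y\{\varphi(y)+n|x-y|\}$ does furnish, for $n\ge|a|$, a non-decreasing sequence of finite convex $n$-Lipschitz (hence linear-growth) minorants of $\varphi$ bounded below by the affine minorant $\ell$, increasing pointwise to $\varphi$ by continuity of $\varphi$; monotone convergence applied to $\varphi_n-\ell\ge 0$ under both $\mu,\nu\in\mathcal{P}_1(\RD)$ then passes the inequality to the limit in $(-\infty,+\infty]$, which is exactly the sense in which the convex order is defined in the paper. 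This is essentially the standard route taken in the cited reference, so nothing is missing.
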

\noindent This characterization allows us to restrict the proofs to convex functions with linear growth to establish the convex ordering.

\medskip
The main result of this section is the following proposition. 
\begin{prop}\label{funconvexEuler}
Under Assumption~\ref{AssumptionI} and~\ref{AssumptionII}, for any convex function $F:(\mathbb{R}^{d})^{M+1}\rightarrow \RR$ with  $p$-polynomial growth in the sense that 
\begin{equation}\label{rpolygrowth2}
\forall\, \, x=(x_{0}, \ldots, x_{M})\in(\RD)^{M+1},\; \exists\, C>0, \; \text{ such that } \; \left|F(x)\right|\leq C\big(1+\sup_{0\leq i\leq M}\left|x_{i}\right|^{p}\big),
\end{equation}
 we have
$\EE F(\bar{X}_{0}, \ldots, \bar{X}_{M})\leq \EE F(\bar{Y}_{0}, \ldots, \bar{Y}_{M}).$
\end{prop}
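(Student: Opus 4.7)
My plan is to combine the marginal convex order of the Euler scheme with a backward dynamic programming principle built separately on each scheme, and then compare the two resulting value functions.

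\textbf{Step 1 (marginal convex order).} I would first prove by induction on $m$ that $\bar\mu_m \conright \bar\nu_m$. The base case $m=0$ is Assumption~\ref{IIprime}-(5). For the induction step, taking any convex $\varphi:\RD\to\RR$ with linear growth (sufficient by Lemma~\ref{onlylineargrowth}), set $Q_m^X\varphi(x) := \EE\,\varphi\bigl(b_m(x,\bar\mu_m)+\sigma_m(x,\bar\mu_m)Z\bigr)$ and $Q_m^Y\varphi(x) := \EE\,\varphi\bigl(b_m(x,\bar\nu_m)+\theta_m(x,\bar\nu_m)Z\bigr)$ with $Z\sim\mathcal{N}(0,\mathbf{I}_q)$. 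I would establish two sublemmas: (i) $Q_m^X\varphi$ is convex in $x$, and (ii) $Q_m^X\varphi \leq Q_m^Y\varphi$ pointwise. Both rest on an auxiliary \emph{matrix-convex-plus-affine} lemma: if $A:\RD\to\RD$ is affine, $\Sigma:\RD\to\MDQ$ is convex for $\preceq$, and $\varphi$ is convex, then $x\mapsto\EE\,\varphi(A(x)+\Sigma(x)Z)$ is convex; together with the Gaussian covariance comparison $\Sigma_1\preceq\Sigma_2 \Rightarrow \EE\,\varphi(\Sigma_1 Z)\leq \EE\,\varphi(\Sigma_2 Z)$, obtained by writing $\Sigma_2 Z \stackrel{d}{=} \Sigma_1 Z + R Z'$ with $Z'$ an independent standard Gaussian and conditioning on $\Sigma_1 Z$. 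Assumption~\ref{IIprime}-(2) yields (i). For (ii), Assumption~\ref{IIprime}-(1) together with the inductive hypothesis gives $b_m(x,\bar\mu_m)=b_m(x,\bar\nu_m)$, and Assumptions~\ref{IIprime}-(3), (4) give $\sigma_m(x,\bar\mu_m)\preceq\sigma_m(x,\bar\nu_m)\preceq\theta_m(x,\bar\nu_m)$. Then
\[
\EE\,\varphi(\bar X_{m+1})=\EE\,Q_m^X\varphi(\bar X_m)\leq \EE\,Q_m^X\varphi(\bar Y_m)\leq \EE\,Q_m^Y\varphi(\bar Y_m)=\EE\,\varphi(\bar Y_{m+1})
\]
closes the induction (the middle inequality using (i) and the inductive hypothesis).

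\textbf{Step 2 (backward DPP and convexity of the value functions).} I would then define two families of functions on $(\RD)^{m+1}$ by $\Phi_M = \Psi_M = F$ and, for $m<M$,
\begin{align*}
\Phi_m(x_0,\ldots,x_m) &= \EE\,\Phi_{m+1}\bigl(x_0,\ldots,x_m,\, b_m(x_m,\bar\mu_m)+\sigma_m(x_m,\bar\mu_m)Z_{m+1}\bigr),\\
\Psi_m(x_0,\ldots,x_m) &= \EE\,\Psi_{m+1}\bigl(x_0,\ldots,x_m,\, b_m(x_m,\bar\nu_m)+\theta_m(x_m,\bar\nu_m)Z_{m+1}\bigr).
\end{align*}
Since $\bar\mu_m$ and $\bar\nu_m$ are deterministic, the recursions~\eqref{EulerX}-\eqref{EulerY} are Markov, and iterating the tower property yields $\EE\,F(\bar X_0,\ldots,\bar X_M)=\EE\,\Phi_0(\bar X_0)$ and $\EE\,F(\bar Y_0,\ldots,\bar Y_M)=\EE\,\Psi_0(\bar Y_0)$. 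A backward induction using the same matrix-convex-plus-affine lemma---now applied jointly in $(x_0,\ldots,x_m)$, with the affine part acting as the identity on $(x_0,\ldots,x_{m-1})$ and as $b_m(\cdot,\bar\mu_m)$ (resp. $b_m(\cdot,\bar\nu_m)$) on $x_m$, and the diffusion block being zero except in the last component---shows that $\Phi_m$ and $\Psi_m$ are convex on $(\RD)^{m+1}$. The $p$-polynomial growth is preserved along the recursion thanks to the linear growth in $x$ of $b_m$, $\sigma_m$, $\theta_m$ (from Assumption~\ref{AssumptionI}), Minkowski's inequality, and $\EE|Z_{m+1}|^p<+\infty$; combined with Proposition~\ref{cvgeuler}(a), all expectations are well defined.

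\textbf{Step 3 (comparison and conclusion).} I would then show $\Phi_m\leq \Psi_m$ by backward induction on $m$. Given $\Phi_{m+1}\leq \Psi_{m+1}$, Step~1 provides $b_m(x_m,\bar\mu_m)=b_m(x_m,\bar\nu_m)$ and $\sigma_m(x_m,\bar\mu_m)\preceq \theta_m(x_m,\bar\nu_m)$; invoking the convexity of $\Psi_{m+1}$ in its last coordinate (Step~2) together with the Gaussian covariance comparison yields
\[
\Phi_m(x_0,\ldots,x_m) \leq \EE\,\Psi_{m+1}\bigl(x_0,\ldots,x_m, b_m(x_m,\bar\mu_m)+\sigma_m(x_m,\bar\mu_m)Z\bigr) \leq \Psi_m(x_0,\ldots,x_m).
\]
Finally, the convexity of $\Psi_0$ together with $\bar X_0=X_0\conright Y_0=\bar Y_0$ gives
\[
\EE\,F(\bar X_0,\ldots,\bar X_M)=\EE\,\Phi_0(\bar X_0)\leq \EE\,\Psi_0(\bar X_0)\leq \EE\,\Psi_0(\bar Y_0)=\EE\,F(\bar Y_0,\ldots,\bar Y_M).
\]

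\textbf{Main obstacle.} The crux is the matrix-convex-plus-affine lemma. Its proof requires reusing the \emph{same} Gaussian $Z$ on both sides of the Gaussian covariance comparison (rather than independent copies) so that, after bounding $\Sigma(\lambda x+(1-\lambda)y)$ by $\lambda\Sigma(x)+(1-\lambda)\Sigma(y)$, the resulting random vector $A(w)+[\lambda\Sigma(x)+(1-\lambda)\Sigma(y)]Z$ can be written as the convex combination $\lambda(A(x)+\Sigma(x)Z)+(1-\lambda)(A(y)+\Sigma(y)Z)$, at which point the convexity of $\varphi$ applied pathwise and taking expectations closes the argument. Once this lemma is established (and packaged with the standard Gaussian covariance comparison via $\Sigma_2 Z\stackrel{d}{=}\Sigma_1 Z+R Z'$), the three inductions above are essentially routine.
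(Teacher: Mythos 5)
There is a genuine gap: your argument uses the convexity of the $\Psi_m$'s, which is not available under the stated assumptions. The backward functions $\Psi_m$ are built with the diffusion coefficient $\theta_m$, and Assumption~\ref{AssumptionII} imposes convexity in $x$ (and monotonicity in $\mu$) \emph{only on} $\sigma$, not on $\theta$ --- this is precisely the point of the result. So the claim in Step~2 that ``$\Phi_m$ and $\Psi_m$ are convex'' is unjustified for $\Psi_m$ (your matrix-convex-plus-affine lemma needs the diffusion block to be $\preceq$-convex in $x$, which holds for $\sigma_m$ by Assumption~\ref{IIprime}-(2) but is not assumed for $\theta_m$). This unproved convexity is then used twice in an essential way: in Step~3, where you pass from $\EE\,\Psi_{m+1}\big(x_{0:m},b_m(x_m,\bar\mu_m)+\sigma_m(x_m,\bar\mu_m)Z\big)$ to $\Psi_m(x_{0:m})$ by the Gaussian covariance comparison applied to $\Psi_{m+1}(x_{0:m},\cdot)$, and in the final chain, where $\EE\,\Psi_0(\bar X_0)\leq \EE\,\Psi_0(\bar Y_0)$ is deduced from $X_0\conright Y_0$ and the convexity of $\Psi_0$. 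As written, both steps fail.

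The gap is repairable by reordering so that every convexity-based comparison is made on the $\Phi$-side, where convexity is genuinely available: in Step~3, first use the convexity of $\Phi_{m+1}$ (in its last coordinate) together with $b_m(x_m,\bar\mu_m)=b_m(x_m,\bar\nu_m)$ and $\sigma_m(x_m,\bar\mu_m)\preceq\sigma_m(x_m,\bar\nu_m)\preceq\theta_m(x_m,\bar\nu_m)$ to get $\Phi_m(x_{0:m})\leq \EE\,\Phi_{m+1}\big(x_{0:m},b_m(x_m,\bar\nu_m)+\theta_m(x_m,\bar\nu_m)Z\big)$, and only then apply $\Phi_{m+1}\leq\Psi_{m+1}$; at time $0$, write $\EE\,\Phi_0(\bar X_0)\leq\EE\,\Phi_0(\bar Y_0)\leq\EE\,\Psi_0(\bar Y_0)$, using convexity of $\Phi_0$ with $X_0\conright Y_0$ first and the pointwise bound $\Phi_0\leq\Psi_0$ second. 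This is exactly how the paper arranges the argument (it keeps the measures as generic arguments of $\Phi_m,\Psi_m$, proves $\Phi_m\le\Psi_m$ for a common measure flow using only $\sigma_m\preceq\theta_m$ and the convexity of $\Phi_{m+1}$, and invokes the measure-monotonicity of $\Phi_0$ plus Proposition~\ref{marginalconvex} at time $0$); your variant, which plugs the actual flows $\bar\mu_{0:M}$, $\bar\nu_{0:M}$ into the two recursions and absorbs the measure comparison of Step~1 into the induction $\Phi_m\le\Psi_m$, is otherwise a legitimate streamlining, and Steps~1 and the tower-property identities are sound.
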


Before proving Proposition~\ref{funconvexEuler}, we first show in the next section  that the Euler scheme defined in~(\ref{EulerX}) and~(\ref{EulerY}) propagates the marginal convex order step by step, i.e. $\bar{X}_{m}\conright \bar{Y}_{m},$ for any fixed $m\in\{0, \ldots, M\}$ by a forward induction.

\subsection{Marginal convex order for the Euler scheme}\label{marginalorder}
Let 
$\mathcal{C}_{cv} (\RD,\RR)\coloneqq\big\{\varphi: \RD\rightarrow\RR \text{ convex function}\big\}$. For every {\color{black}$m=0, \ldots, M-1$}, we define an operator $Q_{m+1}\!: \mathcal{C}_{cv}(\mathbb{R}^{d}, \mathbb{R})\rightarrow\mathcal{C}\big(\RD\times \mathcal{P}_{1}(\RD)\times\MDQ, \RR\big)$ associated with $Z_{m+1}$ defined in ~(\ref{EulerX}) and~(\ref{EulerY}) by 
\begin{equation}\label{defqm}
 (x,\mu, u)\!\in \RD\times \mathcal{P}_{1}(\RD)\times\MDQ \longmapsto (Q_{m+1}\,\varphi) (x,\mu, u)\coloneqq \mathbb{E}\Big[\varphi\big(b_{m} (x, \mu)+u Z_{m+1}\big)\Big]. 
\end{equation}
For every $m=0, \ldots, M$, let $\mathcal{F}_{m}$ 
denote the $\sigma$-algebra generated by $(X_{0}, {\color{black}Y_0}, \, Z_{1}, \ldots, Z_{m})$. The main result in this section is the following. 

\begin{prop}\label{marginalconvex}
Let $(\bar{X}_{m})_{m=0, \ldots, M}$, $(\bar{Y}_{m})_{m=0, \ldots, M}$ be random variables defined by~(\ref{EulerX}) and~(\ref{EulerY}). Under Assumption~\ref{IIprime}, we have 
\[\bar{X}_{m}\conright\bar{Y}_{m}, \quad m=0, \ldots, M.\]
\end{prop}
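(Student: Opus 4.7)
My plan is a forward induction on $m$, with base case $\bar X_0\conright\bar Y_0$ given by Assumption~\ref{IIprime}-(5). By Lemma~\ref{onlylineargrowth} the inductive step reduces to proving $\EE\,\varphi(\bar X_{m+1})\le \EE\,\varphi(\bar Y_{m+1})$ for every convex $\varphi:\RD\to\RR$ with at most linear growth. The whole argument rests on two structural properties of the operator $Q_{m+1}$ defined in~\eqref{defqm}, combined with the various requirements on $b_m,\sigma_m,\theta_m$ collected in Assumption~\ref{IIprime}.

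The first property is joint convexity: for each fixed $\mu\!\in\mathcal{P}_{1}(\RD)$, the map $(x,u)\mapsto (Q_{m+1}\varphi)(x,\mu,u)$ is convex on $\RD\times\MDQ$. This is immediate because $b_m(x,\mu)+uZ_{m+1}$ is affine in $(x,u)$ pathwise (the affinity in $x$ comes from Assumption~\ref{IIprime}-(1)), so the integrand is convex in $(x,u)$ and convexity is preserved by expectation. The second property is matrix-order monotonicity: for each fixed $(x,\mu)$, the map $u\mapsto (Q_{m+1}\varphi)(x,\mu,u)$ is non-decreasing for the partial order $\preceq$ of~\eqref{matrixorder}. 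Indeed, if $u\preceq u'$, then $u'u'^\top-uu^\top$ is positive semi-definite and admits a factorisation $NN^\top$; introducing $Z'\sim\mathcal{N}(0,\mathbf{I}_q)$ independent of $Z_{m+1}$, one may represent $u'Z_{m+1}$ in distribution as $uZ_{m+1}+NZ'$, and conditioning on $Z_{m+1}$ together with Jensen's inequality for $\varphi$ yields the claim.

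With these two tools, the inductive step is a composition argument. Set $\psi(x):=(Q_{m+1}\varphi)(x,\bar\mu_m,\sigma_m(x,\bar\mu_m))$. For $x,y\!\in\RD$ and $\lambda\!\in[0,1]$, writing $z:=\lambda x+(1-\lambda)y$, the matrix-convexity~\eqref{conm1} of $\sigma_m(\cdot,\bar\mu_m)$ together with the monotonicity property yields
\[
\psi(z)\le (Q_{m+1}\varphi)\bigl(z,\bar\mu_m,\lambda\sigma_m(x,\bar\mu_m)+(1-\lambda)\sigma_m(y,\bar\mu_m)\bigr),
\]
and then joint convexity dominates the right-hand side by $\lambda\psi(x)+(1-\lambda)\psi(y)$. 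Hence $\psi$ is convex with at most linear growth in $x$ (using that $\sigma_m$ is Lipschitz), and Proposition~\ref{cvgeuler} gives $\bar X_m,\bar Y_m\!\in L^p(\PP)$, so the inductive hypothesis $\bar X_m\conright\bar Y_m$ delivers $\EE\,\psi(\bar X_m)\le \EE\,\psi(\bar Y_m)$. Since $\EE\,\psi(\bar X_m)=\EE\,\varphi(\bar X_{m+1})$ by independence of $Z_{m+1}$ and $\mathcal{F}_m$, it only remains to check $\EE\,\psi(\bar Y_m)\le \EE\,\varphi(\bar Y_{m+1})$. This is done by feeding the remaining parts of Assumption~\ref{IIprime} one at a time: part~(1) allows one to replace $\bar\mu_m$ by $\bar\nu_m$ inside $b_m$ (using $\bar\mu_m\conright\bar\nu_m$ from the inductive hypothesis), after which $u$-monotonicity is applied twice along the matrix-order chain $\sigma_m(\bar Y_m,\bar\mu_m)\preceq\sigma_m(\bar Y_m,\bar\nu_m)\preceq\theta_m(\bar Y_m,\bar\nu_m)$ coming from~\eqref{conm2} and~\eqref{conm3}.

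The main technical obstacle is the matrix-order monotonicity of $Q_{m+1}\varphi$ in $u$: while intuitively natural, it genuinely exploits the Gaussianity of the innovations through the explicit decomposition above, and would fail for innovations with heavier tails. Everything else amounts to careful book-keeping of the four nontrivial parts of Assumption~\ref{IIprime} in the right order within the composition argument.
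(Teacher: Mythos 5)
Your proposal is correct and follows essentially the same route as the paper: forward induction reduced via Lemma~\ref{onlylineargrowth}, the joint convexity and matrix-order monotonicity of $Q_{m+1}$ (the paper's Lemma~\ref{revisiedjensen}, whose monotonicity part is exactly your Gaussian decomposition, Lemma~\ref{orderZ}), and the convexity with linear growth of $x\mapsto\EE\varphi\big(b_m(x,\mu)+\sigma_m(x,\mu)Z_{m+1}\big)$ (the paper's Lemma~\ref{marglemma}). The only deviation is the order of the intermediate inequalities in the induction step---you switch the integrating measure from $\bar{\mu}_m$ to $\bar{\nu}_m$ first and then run the coefficient chain $\sigma_m(\cdot,\bar{\mu}_m)\preceq\sigma_m(\cdot,\bar{\nu}_m)\preceq\theta_m(\cdot,\bar{\nu}_m)$ under the $\bar{\nu}_m$-integral, whereas the paper updates the measure argument of the coefficients first---but both orderings rely on the same lemmas and are valid under Assumption~\ref{IIprime}.
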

The proof of Proposition~\ref{marginalconvex} relies on the following  two lemmas whose proofs are postponed to Appendix~\ref{appB}.
\begin{lem}[Revisited Jensen's Lemma]\label{revisiedjensen}
 Let $\varphi\in C_{cv}(\RD, \RR)$ \textcolor{black}{with linear growth and let $\mu\!\in\mathcal{P}_{1}(\RD)$}. Then, for every $m=1, \dots, M$,
\begin{enumerate}[$(i)$] 
\item   the function $(x,u)\mapsto(Q_m\varphi )(x, \mu, u)$ is (finite and) convex.
\item for any fixed $x\!\in \RD$, the function $u\mapsto(Q_m\varphi)(x, \mu, u)$ attains its minimum at $\mathbf{0}_{d\times q}$, where $\mathbf{0}_{d\times q}$ is the zero-matrix of size $d\times q$, 
\item for any fixed $x\!\in \RD$, the function $u\mapsto(Q_m\varphi)(x, \mu, u)$ is non-decreasing with respect to the partial order of $d\times q$ matrix (\ref{matrixorder}).
\end{enumerate}
\end{lem}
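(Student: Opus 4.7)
All three assertions rest on the elementary observation that for each $\omega$, the map $(x,u)\mapsto b_{m-1}(x,\mu)+u\,Z_m(\omega)$ is affine in $(x,u)\!\in\RD\times\MDQ$, because $b_{m-1}(\cdot,\mu)$ is affine in $x$ by Assumption~\ref{IIprime}-(1) and the action $u\mapsto u\,Z_m(\omega)$ is linear. For part~(i), composing this pointwise-affine map with the convex function $\varphi$ yields an integrand that is convex in $(x,u)$ for each $\omega$, and convexity is preserved under expectation. Finiteness follows from the linear-growth estimate $|\varphi(y)|\le C(1+|y|)$, which gives
\[
\bigl|\varphi\bigl(b_{m-1}(x,\mu)+uZ_m\bigr)\bigr|\le C\bigl(1+|b_{m-1}(x,\mu)|+\vertiii{u}\cdot|Z_m|\bigr),
\]
the right-hand side being integrable since $\mu\!\in\mathcal{P}_1(\RD)$ (hence $b_{m-1}(x,\mu)$ is finite via the linear-growth bound on $b$ implied by Assumption~\ref{AssumptionI}) and $Z_m$ is Gaussian.

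For part~(ii), I would apply Jensen's inequality directly to the convex function $\varphi$ and the random vector $b_{m-1}(x,\mu)+uZ_m$, using that $Z_m\sim\mathcal{N}(0,I_q)$ is centered:
\[
(Q_m\varphi)(x,\mu,u)=\mathbb{E}\bigl[\varphi\bigl(b_{m-1}(x,\mu)+uZ_m\bigr)\bigr]\ge\varphi\bigl(b_{m-1}(x,\mu)\bigr)=(Q_m\varphi)(x,\mu,\mathbf{0}_{d\times q}),
\]
so $\mathbf{0}_{d\times q}$ is indeed a global minimizer.

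For part~(iii), which I expect to be the main obstacle because it is where the partial matrix order~\eqref{matrixorder} really comes into play, the idea is a Gaussian coupling. Suppose $u\preceq v$, so that $vv^{\top}-uu^{\top}$ is positive semi-definite. Enlarging the probability space if necessary, introduce a $d$-dimensional Gaussian vector $W\sim\mathcal{N}(0,vv^{\top}-uu^{\top})$ independent of $Z_m$. Then $uZ_m+W$ is Gaussian with covariance $uu^{\top}+(vv^{\top}-uu^{\top})=vv^{\top}$, hence has the same law as $vZ_m$. Applying Jensen's inequality conditionally on $Z_m$, and using $\mathbb{E}[W\mid Z_m]=\mathbb{E}[W]=0$ by independence, one obtains
\begin{align*}
(Q_m\varphi)(x,\mu,v)
&=\mathbb{E}\bigl[\varphi\bigl(b_{m-1}(x,\mu)+uZ_m+W\bigr)\bigr]\\
&\ge\mathbb{E}\bigl[\varphi\bigl(b_{m-1}(x,\mu)+uZ_m\bigr)\bigr]=(Q_m\varphi)(x,\mu,u),
\end{align*}
which is precisely the claimed monotonicity for the matrix partial order. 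The only delicate point is the covariance matching $uu^{\top}+(vv^{\top}-uu^{\top})=vv^{\top}$, after which the argument reduces to a conditional version of Jensen's inequality whose applicability has already been secured by the integrability bound established in~(i).
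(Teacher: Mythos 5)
Your proposal is correct and follows essentially the same route as the paper: affinity of $b_{m-1}(\cdot,\mu)$ plus convexity of $\varphi$ for $(i)$, Jensen with the centered Gaussian for $(ii)$, and for $(iii)$ the Gaussian coupling $uZ_m+W$ with $W\sim\mathcal{N}(0,vv^{\top}-uu^{\top})$ independent of $Z_m$ followed by conditional Jensen, which is exactly the argument the paper delegates to its auxiliary Lemma on the ordering $u_1Z\preceq_{cv}u_2Z$ (borrowed from Jourdain--Pag\`es). The only difference is presentational: you inline that coupling instead of invoking the separate lemma, and you make the integrability check explicit.
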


\begin{lem}\label{marglemma}
\textcolor{black}{Let $\varphi\in\mathcal{C}_{cv}(\RD, \RR)$ with linear growth}.
Then for a fixed $\mu\in\mathcal{P}_{1}(\mathbb{R}^{d})$,  the function $x\mapsto \mathbb{E}\Big[\varphi\big(b_{m}(x, \mu)+\sigma_{m}(x, \mu)  Z_{m+1}\big)\Big]$ is convex with linear growth for every $m=0, \ldots, M-1$. 
\end{lem}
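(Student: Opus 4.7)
The plan is to write $\Psi(x) := \mathbb{E}\big[\varphi\big(b_{m}(x, \mu)+\sigma_{m}(x, \mu)  Z_{m+1}\big)\big]$ as a composition
\[
\Psi(x) = (Q_{m+1}\varphi)\big(x,\,\mu,\,\sigma_{m}(x,\mu)\big),
\]
and then exploit the three properties of $(x,u)\mapsto (Q_{m+1}\varphi)(x,\mu,u)$ established in Lemma~\ref{revisiedjensen}, namely joint convexity in $(x,u)$ and monotonicity in $u$ with respect to the partial matrix order~\eqref{matrixorder}, together with the two properties of $\sigma_m$ coming from Assumption~\ref{IIprime}(2), namely its convexity in $x$ with respect to the same partial order.

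For the convexity part, I would fix $x,y\in\RD$ and $\lambda\in[0,1]$. Because $b_m(\cdot,\mu)$ is \emph{affine} in $x$ (Assumption~\ref{IIprime}(1)) one does not need to worry about $b_m$ in the convexity argument; it behaves linearly. Since $\sigma_{m}(\lambda x+(1-\lambda)y,\mu)\preceq \lambda\sigma_{m}(x,\mu)+(1-\lambda)\sigma_{m}(y,\mu)$ by Assumption~\ref{IIprime}(2), the monotonicity property (iii) of Lemma~\ref{revisiedjensen} yields
\[
(Q_{m+1}\varphi)\big(\lambda x+(1-\lambda)y,\mu,\sigma_{m}(\lambda x+(1-\lambda)y,\mu)\big) \le (Q_{m+1}\varphi)\big(\lambda x+(1-\lambda)y,\mu,\lambda\sigma_{m}(x,\mu)+(1-\lambda)\sigma_{m}(y,\mu)\big).
\]
Applying then the joint convexity of $(x,u)\mapsto(Q_{m+1}\varphi)(x,\mu,u)$ from Lemma~\ref{revisiedjensen}(i) to the right-hand side, one bounds it above by $\lambda\Psi(x)+(1-\lambda)\Psi(y)$, giving $\Psi(\lambda x+(1-\lambda)y)\le \lambda\Psi(x)+(1-\lambda)\Psi(y)$. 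This is the chain-of-monotone-convex composition argument that is standard in convex ordering for Brownian diffusions, only here applied with the auxiliary measure variable $\mu$ frozen.

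For the linear growth, I would use that under Assumption~\ref{AssumptionI} the maps $x\mapsto b_m(x,\mu)$ and $x\mapsto\sigma_m(x,\mu)$ are Lipschitz (with $\mu$ fixed), hence satisfy $|b_m(x,\mu)|+\vertiii{\sigma_m(x,\mu)}\le C_\mu(1+|x|)$. Combined with the assumed linear growth of $\varphi$ and the integrability of the standard Gaussian $Z_{m+1}$, one gets
\[
|\Psi(x)| \le C\,\mathbb{E}\big[1+|b_m(x,\mu)|+\vertiii{\sigma_m(x,\mu)}|Z_{m+1}|\big] \le C'_\mu(1+|x|),
\]
which is the required linear growth.

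The only delicate step is ensuring that the monotonicity argument is applied cleanly: one must invoke (iii) of Lemma~\ref{revisiedjensen} \emph{before} convexity in $(x,u)$, because the convex combination $\lambda\sigma_{m}(x,\mu)+(1-\lambda)\sigma_{m}(y,\mu)$ only dominates $\sigma_m$ in the partial matrix order, not in value. Provided Lemma~\ref{revisiedjensen}(i)--(iii) are in hand (they are stated just above), no real obstacle remains; the proof is a two-line composition once those ingredients are collected.
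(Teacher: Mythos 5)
Your proposal is correct and follows essentially the same route as the paper: both proofs combine the affinity of $b_m$ in $x$, the matrix-order convexity of $\sigma_m$ (Assumption~\ref{IIprime}-(2)) with the monotonicity in $u$ from Lemma~\ref{revisiedjensen}, and then a convexity step (you invoke the joint convexity of $(x,u)\mapsto (Q_{m+1}\varphi)(x,\mu,u)$ from Lemma~\ref{revisiedjensen}-$(i)$, while the paper applies the convexity of $\varphi$ directly, which is the same argument in unpacked form), and the linear-growth bound via the linear growth of $b_m$, $\sigma_m$ and the integrability of $Z_{m+1}$ matches the paper's.
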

\begin{proof}[Proof of Proposition~\ref{marginalconvex}]
Assumption~\ref{IIprime} directly implies $\bar{X}_{0}\conright \bar{Y}_{0}$.  
Assume $\bar{X}_{m}\conright \bar{Y}_{m}$ or, equivalently, $\bar \mu_m\preceq_{cv}\bar \nu_m$.  \textcolor{black}{Let $\varphi: \RD \to \mathbb{R}$ be a convex function with linear growth}. Then,
\begin{align}
\EE [\varphi(\bar{X}_{m+1})] &= \EE\Big[ \varphi\big( b_{m}(\bar{X}_{m}, \bar{\mu}_{m})+\sigma_{m}(\bar{X}_{m}, \bar{\mu}_{m})Z_{m+1}\big)\Big]\nonumber\\
&= \EE \Big[ \EE \big[\varphi\big( b_{m}(\bar{X}_{m}, \bar{\mu}_{m})+\sigma_{m}(\bar{X}_{m}, \bar{\mu}_{m})Z_{m+1}\big)\mid \mathcal{F}_{m}   \big]\Big]\nonumber\\
&=\int_{\mathbb{R}^{d}}\bar{\mu}_{m}(dx)\,\EE \Big[ \varphi \big( b_{m}(x, \bar{\mu}_{m})+\sigma_{m}(x, \bar{\mu}_{m})Z_{m+1}\big)\Big]\nonumber\\
&\hspace{0.5cm}\text{(the integrability is due to Proposition~\ref{cvgeuler} and Lemma~\ref{marglemma}})\nonumber\\
&\leq \int_{\mathbb{R}^{d}}\bar{\mu}_{m}(dx)\,\EE \Big[  \varphi \big(b_{m}(x, \bar{\nu}_{m})+\sigma_{m}(x, \bar{\nu}_{m})Z_{m+1}\big)\Big]\nonumber\\ 
&\hspace{0.5cm}\text{(by Assumption~(\ref{assmbm}), (\ref{conm2}) and Lemma~\ref{revisiedjensen}, since $\bar{\mu}_{m}\conright\bar{\nu}_{m}$)}\nonumber\\
&\leq \int_{\mathbb{R}^{d}}\bar{\nu}_{m}(dx)\,\EE \Big[  \varphi \big(b_{m}(x, \bar{\nu}_{m})+\sigma_{m}(x, \bar{\nu}_{m})Z_{m+1}\big)\Big]\nonumber\\
&\hspace{0.5cm}\text{(by Lemma~\ref{marglemma},  since $\bar{\mu}_{m}\conright\bar{\nu}_{m}$)}\nonumber\\
&\leq \int_{\mathbb{R}^{d}}\bar{\nu}_{m}(dx)\,\EE \Big[  \varphi \big(b_{m}(x, \bar{\nu}_{m})+\theta_{m}(x, \bar{\nu}_{m})Z_{m+1}\big)\Big]\nonumber\\
&\hspace{0.5cm}\text{\big(by Assumption~(\ref{conm3}) and Lemma~\ref{revisiedjensen}}\big)\nonumber\\
&=\EE [\varphi(\bar{Y}_{m+1})]\nonumber.
\end{align}
Thus $\bar{X}_{m+1}\conright \bar{Y}_{m+1}$ by applying Lemma~\ref{onlylineargrowth}. One concludes by a forward induction. 
\end{proof}

\begin{proof}[Proof of Corollary~\ref{cormain}]$(a)$ This is a straightforward consequence of Theorem~\ref{mainthmconv}$(a)$ by setting, $f$ being a convex function with linear growth and $t\!\in [0,T]$,  $F(\alpha)= f(\alpha(t))$, $\alpha \!\in {\cal C}([0,T], \RR)$. 

A direct proof is as follows: we only need to prove $\mu_T\conright\nu_{\,T}$ since for any $t\in[0, T]$ the proof for $\mu_t\conright\nu_{t}$ is the same by considering $(X_s)_{s\in[0, t]}$ and $(Y_s)_{s\in[0, t]}$. Let $\varphi: \RD\rightarrow \RR$ be a convex function with linear growth. Proposition~\ref{marginalconvex} implies that for every $h=\frac{T}{M}\geq 0$, $\bar{X}_{M}\conright \bar{Y}_{M}$. Thus for every $h>0$, $\EE \varphi (\bar{X}_{M})\leq \EE \varphi (\bar{Y}_{M})$. 
It follows from Proposition~\ref{cvgeuler} that
$\EE \varphi (\bar{X}_{M})\rightarrow \EE \varphi (X_T)$ and $\EE \varphi (\bar{Y}_{M})\rightarrow \EE \varphi (Y_T)$ as $h\rightarrow 0$ since we assumed $p\geq 2$. Hence $\EE \varphi (X_T)\leq \EE \varphi (Y_T)$ by letting $h\rightarrow 0$, i.e. $X_T\conright Y_T$ by applying Lemma~\ref{onlylineargrowth}.

\noindent $(b)$ For every $x, y\!\in \RD$ and $\lambda\!\in [0,1]$, we have $\delta_{\lambda x + (1-\lambda)y}\conright \lambda \delta_{x}+ (1-\lambda)\delta_{y}$ and one  concludes  by applying Theorem~\ref{mainthmconv}  with $\theta=\sigma$, $X_0\sim \delta_{\lambda x + (1-\lambda)y}$ and $Y_0\sim \lambda \delta_{x}+ (1-\lambda)\delta_{y}$.
\end{proof}

The next proposition prove that we can dissociate the assumption on the convexity and monotonicity in Assumption~\ref{AssumptionII} - (2), (3) to obtain the same marginal convex order as in Corollary~\ref{cormain} - $(a)$. 
This seems to be specific to marginal convex ordering.
\begin{prop}\label{marginal2}Let $\mu_t, \;\nu_t,\;t\in [0, T]$,  respectively denote the marginal distributions of the solution processes $X$ and $Y$ in (\ref{defconvx}), (\ref{defconvy}). 
If we replace Assumption~\ref{AssumptionII}-(3) by the following condition:

\noindent$(3')$ For every fixed $(t,x)\in\RR_{+}\times\RD$, the function $\theta(t, x, \cdot)$ is non-decreasing in $\mu$ with respect to the convex order in the sense that
\begin{equation}\label{conmtheta}
\forall\, \, \mu, \nu\in\mathcal{P}_{p}(\RD), \quad\mu\conright\nu \Longrightarrow \theta(t, x, \mu)\conright\theta(t, x, \nu),
\end{equation}


\noindent then for every $t\in[0, T]$, $\mu_t\conright\nu_t$. 
\end{prop}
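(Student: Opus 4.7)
The plan is to mimic the forward-induction argument of Proposition~\ref{marginalconvex} at the level of the Euler schemes, but to carefully reorder the chain of inequalities so that convexity of the integrand in $x$ is invoked at the stage where $\sigma_m$ (rather than $\theta_m$) appears, and the new hypothesis $(3')$ on $\theta$ is used only afterwards, when no further convexity-in-$x$ step is needed. Once the discrete-time marginal convex order $\bar\mu_m\conright\bar\nu_m$ has been obtained for all $m=0,\dots,M$, the continuous-time marginal order $\mu_t\conright\nu_t$ follows exactly as in the proof of Corollary~\ref{cormain}$(a)$, by letting $h=T/M\to 0$ and appealing to the $L^p$-strong convergence of the Euler scheme (Proposition~\ref{cvgeuler}) together with Lemma~\ref{onlylineargrowth}.

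For the induction step, suppose $\bar\mu_m\conright\bar\nu_m$ and fix a convex $\varphi:\RD\to\RR$ with linear growth. I would start from
\[
\EE\bigl[\varphi(\bar X_{m+1})\bigr]=\int_{\RD}\bar\mu_m(dx)\,\EE\bigl[\varphi\bigl(b_m(x,\bar\mu_m)+\sigma_m(x,\bar\mu_m)Z_{m+1}\bigr)\bigr]
\]
and chain four inequalities. First, use Assumption~\ref{AssumptionII}-(1) to rewrite $b_m(x,\bar\mu_m)=b_m(x,\bar\nu_m)$. Second, upgrade the integrating measure from $\bar\mu_m$ to $\bar\nu_m$: this is legitimate because $x\mapsto \EE\bigl[\varphi\bigl(b_m(x,\bar\nu_m)+\sigma_m(x,\bar\mu_m)Z_{m+1}\bigr)\bigr]$ is convex with linear growth in $x$ by a trivial variant of Lemma~\ref{marglemma} (its proof only needs $b_m$ affine in $x$, which is measure-independent, and convexity of $\sigma_m(\cdot,\bar\mu_m)$ in $x$, which is Assumption~\ref{AssumptionII}-(2)). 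Third, replace $\sigma_m(x,\bar\mu_m)$ by $\theta_m(x,\bar\mu_m)$ using Assumption~\ref{AssumptionII}-(4) together with Lemma~\ref{revisiedjensen}$(iii)$. Fourth, and only now, replace $\theta_m(x,\bar\mu_m)$ by $\theta_m(x,\bar\nu_m)$ using the new hypothesis $(3')$ on $\theta$ together again with Lemma~\ref{revisiedjensen}$(iii)$. The resulting expression is $\EE[\varphi(\bar Y_{m+1})]$, and Lemma~\ref{onlylineargrowth} closes the induction.

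The point to emphasise is the order of the last two operations: it is essential to perform the measure-switch step while $\sigma_m$ (convex in $x$) is still present in the integrand, because no convexity in $x$ is assumed on $\theta$ under the weakened assumption; postponing the transition $\sigma_m\rightsquigarrow\theta_m$ until after the measure swap is what makes the argument go through. This also explains why the dissociation between convexity in $x$ and monotonicity in $\mu$ is specific to marginal (as opposed to functional) convex ordering: the functional version would require propagating convexity-in-$x$ through the backward dynamic programming of Section~\ref{convprocess}, which is precisely what is unavailable when convexity in $x$ is only granted to $\sigma$ but monotonicity in $\mu$ only to $\theta$.
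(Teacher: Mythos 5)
Your proposal is correct and follows essentially the same route as the paper: a forward induction on the Euler scheme in which the integrating measure is switched from $\bar\mu_m$ to $\bar\nu_m$ while $\sigma_m$ (convex in $x$, via Lemma~\ref{marglemma}) is still in the integrand, then $\sigma_m$ is replaced by $\theta_m$ via Assumption~\ref{AssumptionII}-(4) and Lemma~\ref{revisiedjensen}-$(iii)$, and only then the measure argument of $\theta_m$ is upgraded using $(3')$, before passing to the limit $h\to 0$ as in Corollary~\ref{cormain}-$(a)$. The only cosmetic difference is that you rewrite $b_m(x,\bar\mu_m)=b_m(x,\bar\nu_m)$ at the start rather than at the end, which is immaterial since it is an equality under Assumption~\ref{AssumptionII}-(1).
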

\begin{proof}[Proof of Proposition~\ref{marginal2}]
First, remark that the proofs of Lemma~\ref{revisiedjensen} and~\ref{marglemma} do not depend on the condition (\ref{assumptionorder}) and (\ref{conmtheta}) and the induction step of Proposition~\ref{marginalconvex} remains true under the new assumption $(3')$.   Assume that $\bar{X}_{m}\conright\bar{Y}_{m}$.  If we consider a convex function $\varphi: \RD\rightarrow \RR$ with linear growth, then 
\begin{align}
\EE [\varphi(\bar{X}_{m+1})] &= \EE\Big[ \varphi\big( b_{m}(\bar{X}_{m}, \bar{\mu}_{m})+\sigma_{m}(\bar{X}_{m}, \bar{\mu}_{m})Z_{m+1}\big)\Big]\nonumber\\
&= \EE \Big[ \EE \big[\varphi\big( b_{m}(\bar{X}_{m}, \bar{\mu}_{m})+\sigma_{m}(\bar{X}_{m}, \bar{\mu}_{m})Z_{m+1}\big)\mid \mathcal{F}_{m}   \big]\Big]\nonumber\\
&=\int_{\mathbb{R}^{d}}\bar{\mu}_{m}(dx)\,\EE \Big[ \varphi \big( b_{m}(x, \bar{\mu}_{m})+\sigma_{m}(x, \bar{\mu}_{m})Z_{m+1}\big)\Big]\nonumber\\
&\leq \int_{\mathbb{R}^{d}}\bar{\nu}_{m}(dx)\,\EE \Big[ \varphi \big( b_{m}(x, \bar{\mu}_{m})+\sigma_{m}(x, \bar{\mu}_{m})Z_{m+1}\big)\Big]\nonumber\\
&\hspace{0.5cm}\text{(by Lemma~\ref{marglemma}, as $\bar{\mu}_m\conright \bar{\nu}_m$)}\nonumber\\
&\leq \int_{\mathbb{R}^{d}}\bar{\nu}_{m}(dx)\,\EE \Big[ \varphi \big( b_{m}(x, \bar{\mu}_{m})+\theta_{m}(x, \bar{\mu}_{m})Z_{m+1}\big)\Big]\nonumber\\
&\hspace{0.5cm}\text{\big(by Assumption~(\ref{conm3}) and Lemma~\ref{revisiedjensen}-$(iii)$}\big)\nonumber\\
&\leq \int_{\mathbb{R}^{d}}\bar{\nu}_{m}(dx)\,\EE \Big[ \varphi \big( b_{m}(x, \bar{\nu}_{m})+\theta_{m}(x, \bar{\nu}_{m})Z_{m+1}\big)\Big]\nonumber\\
&\hspace{0.5cm}\text{(by Assumption~(\ref{assmbm}), (\ref{conmtheta}) and Lemma~\ref{revisiedjensen}, since $\bar{\mu}_{m}\conright\bar{\nu}_{m}$)}\nonumber\\
&=\EE [\varphi(\bar{Y}_{m+1})]\nonumber.
\end{align}
Thus one has $\bar{X}_{m}\conright\bar{Y}_{m}$ for every $m=0, \dots, M$ by a forward induction. Hence we can conclude by applying the convergence of the Euler scheme as in the proof of Corollary~\ref{cormain}$(a)$.
\end{proof}

\subsection{Global convex order for the Euler scheme}
We will prove Proposition~\ref{funconvexEuler} in this section. 
For any $K\in\mathbb{N}^{*}$, we consider the norm on $(\mathbb{R}^{d})^{K}$ defined by $\vertii{x}\coloneqq \sup_{1\leq i \leq K}\left|x_{i}\right|$ for every $x=(x_{1}, \dots, x_{K})\in(\mathbb{R}^{d})^{K}$, where $|\cdot|$ denotes the canonical Euclidean norm on $\RD$.
For any $m_{1}, m_{2}\in\mathbb{N}^{*}$ with $m_{1}\leq m_{2}$, we denote by $x_{m_1:m_{2}}\coloneqq (x_{m_{1}}, x_{m_{1}+1}, \ldots, x_{m_{2}})\in(\RD)^{m_{2}-m_{1}+1}$. Similarly, we denote by $\mu_{m_{1}: \;m_{2}}\coloneqq(\mu_{m_{1}}, \ldots, \mu_{m_{2}})\in\big(\mathcal{P}_{1}(\RD)\big)^{m_{2}-m_{1}+1}$.
We recursively define a sequence of functions~(\footnote{We formally consider the case where $\Phi_M$ may depend on $\mu_M$ in view of the proof of Proposition~\ref{convschemG} and item $(b)$ of Theorem~\ref{mainthmconv}.}) 
\[
\Phi_{m}: (\RD)^{m+1}\times \big(\mathcal{P}_{1}(\RD)\big)^{M-m+1}\rightarrow \RR, \quad m=0, \ldots, M
\] 
in a backward way as follows: 

$\blacktriangleright\quad$Set 
\begin{equation}\label{defPhi1}
\Phi_{M}(x_{0: M}; \mu_{M})\coloneqq F(x_{0}, \ldots, x_{M})
\end{equation} 
where $F:(\mathbb{R}^{d})^{M+1}\rightarrow \RR$ is a convex function with $p$-polynomial growth (\ref{rpolygrowth2}).

$\blacktriangleright\quad$For $m=0, \ldots, M-1$, set
\begin{align}
&\Phi_{m}(x_{0: m}; \mu_{m: M}) \coloneqq\big(Q_{m+1}\Phi_{m+1}(x_{0: m}, \,\cdot\,; \mu_{m+1: M})\big)\big(x_{m}, \mu_{m}, \sigma_{m}(x_{m}, \mu_{m})\big)&\nonumber\\
\label{defPhi2}
&\hspace{0.7cm}=\EE \Big[\Phi_{m+1}\big(x_{0: m}, {\color{black}b_{m}(x_{m}, \mu_{m})}+\sigma_{m}(x_{m}, \mu_{m})Z_{m+1}; \mu_{m+1: M}\big)\Big].&
\end{align}
The functions  $\Phi_{m}, m=0, \ldots, M$, 
 share the following properties.  
\begin{lem}\label{propphi}
For every $m=0, \ldots, M$,  
\begin{enumerate}[$(i)$]
\item for a fixed $\mu_{m:M}\in\big(\mathcal{P}_{1}(\RD)\big)^{M-m+1}$, the function $\Phi_{m}(\;\cdot\;; \mu_{m: M})$ is convex and has a  $p$-polynomial growth in $x_{0: m}$ so that $\Phi_{m}$ is well-defined. 
\item for a fixed $x_{0:m}\in(\RD)^{m+1}$, the function $\Phi_{m}(x_{0: m}\,; \;\cdot\;)$ is non-decreasing in $\mu_{m:M}$ with respect to the convex order  in the sense that for any $\mu_{m:M}, \nu_{m:M}\in \big(\mathcal{P}_{1}(\RD)\big)^{M-m+1}$ such that $\mu_{i}\conright \nu_{i}, i= m, \ldots, M$, 
\begin{align}
& \Phi_{m}(x_{0:m}\,; \mu_{m: M})\leq \Phi_{m}(x_{0:m}\, ;\nu_{m: M}). 
\end{align}
\end{enumerate}
\end{lem}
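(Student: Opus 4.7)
The plan is to prove both assertions jointly by backward induction on $m$, running down from $m=M$ to $m=0$. The base case $m=M$ is immediate: $\Phi_M(x_{0:M};\mu_M)=F(x_0,\dots,x_M)$ is convex with $p$-polynomial growth in $x_{0:M}$ by the hypothesis on $F$, and it is independent of $\mu_M$ so the monotonicity claim is vacuous.

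For the inductive step, fix $m\le M-1$ and assume (i) and (ii) hold at level $m+1$. Fix $\mu_{m:M}$ and introduce the auxiliary function
\[
\psi(y_{0:m},u):=\mathbb{E}\!\left[\Phi_{m+1}\bigl(y_{0:m-1},y_m,b_m(y_m,\mu_m)+u\,Z_{m+1};\,\mu_{m+1:M}\bigr)\right],\quad u\in\mathbb{M}_{d\times q}.
\]
Because $b_m(\cdot,\mu_m)$ is affine by Assumption~\ref{IIprime}-(1), the argument fed to $\Phi_{m+1}$ is affine in $(y_{0:m},u)$ for each realization of $Z_{m+1}$; combined with convexity of $\Phi_{m+1}(\cdot;\mu_{m+1:M})$ from the induction hypothesis, this yields that $\psi$ is \emph{jointly convex} in $(y_{0:m},u)$. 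Moreover, applying Lemma~\ref{revisiedjensen}-(iii) to the convex map $y\mapsto \Phi_{m+1}(y_{0:m-1},y_m,y;\mu_{m+1:M})$ shows that, for each fixed $y_{0:m}$, the map $u\mapsto\psi(y_{0:m},u)$ is non-decreasing for the matrix partial order $\preceq$. Writing $\Phi_m(x_{0:m};\mu_{m:M})=\psi(x_{0:m},\sigma_m(x_m,\mu_m))$ and using the matrix-convexity of $\sigma_m(\cdot,\mu_m)$ given by Assumption~\ref{IIprime}-(2), one obtains, for $\lambda\in[0,1]$ and $x^{(1)}_{0:m},x^{(2)}_{0:m}$,
\begin{align*}
\Phi_m\bigl(\lambda x^{(1)}_{0:m}+(1-\lambda)x^{(2)}_{0:m};\mu_{m:M}\bigr)
&=\psi\bigl(\lambda x^{(1)}_{0:m}+(1-\lambda)x^{(2)}_{0:m},\sigma_m(\lambda x^{(1)}_m+(1-\lambda)x^{(2)}_m,\mu_m)\bigr)\\
&\le \psi\bigl(\lambda x^{(1)}_{0:m}+(1-\lambda)x^{(2)}_{0:m},\lambda\sigma_m(x^{(1)}_m,\mu_m)+(1-\lambda)\sigma_m(x^{(2)}_m,\mu_m)\bigr)\\
&\le \lambda\,\Phi_m(x^{(1)}_{0:m};\mu_{m:M})+(1-\lambda)\,\Phi_m(x^{(2)}_{0:m};\mu_{m:M}),
\end{align*}
the first inequality using $u$-monotonicity of $\psi$ and the second its joint convexity. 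The $p$-polynomial growth of $\Phi_m$ in $x_{0:m}$ then follows by injecting the induction hypothesis into~\eqref{defPhi2} and using the Lipschitz property~\eqref{assumplip} of $b_m,\sigma_m$ in $x$ together with the finiteness of $\mathbb{E}|Z_{m+1}|^p$; the multiplicative constant picks up an additive contribution in $\mathcal{W}_p(\mu_m,\delta_0)$ which is harmless since $\mu_{m:M}$ is frozen.

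For assertion (ii), fix $x_{0:m}$ and assume $\mu_i\preceq_{cv}\nu_i$ for $i=m,\dots,M$. The induction hypothesis (ii) at level $m+1$ gives, pointwise in $y$,
\[
\Phi_{m+1}(x_{0:m},y;\mu_{m+1:M})\le\Phi_{m+1}(x_{0:m},y;\nu_{m+1:M}),
\]
so by taking expectation with $y=b_m(x_m,\mu_m)+\sigma_m(x_m,\mu_m)Z_{m+1}$,
\[
\Phi_m(x_{0:m};\mu_{m:M})\le\mathbb{E}\!\left[\Phi_{m+1}\bigl(x_{0:m},b_m(x_m,\mu_m)+\sigma_m(x_m,\mu_m)Z_{m+1};\nu_{m+1:M}\bigr)\right].
\]
Next, Assumption~\ref{IIprime}-(1) upgrades this to $b_m(x_m,\mu_m)=b_m(x_m,\nu_m)$, while Assumption~\ref{IIprime}-(3) gives $\sigma_m(x_m,\mu_m)\preceq\sigma_m(x_m,\nu_m)$; combined with Lemma~\ref{revisiedjensen}-(iii) applied to the convex function $y\mapsto\Phi_{m+1}(x_{0:m},y;\nu_{m+1:M})$ (convex by induction hypothesis~(i) already established at level $m+1$), the right-hand side is bounded above by $\Phi_m(x_{0:m};\nu_{m:M})$, which closes the induction.

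The main obstacle is the delicate interplay between three ingredients that must be applied in the correct order: the matrix-order monotonicity of Gaussian integration (Lemma~\ref{revisiedjensen}-(iii)), the matrix-convexity of $\sigma_m$, and the joint convexity of the auxiliary function $\psi$. A single misapplication produces an inequality in the wrong direction. A secondary technicality is to propagate the $p$-polynomial growth through the backward recursion with constants that depend on the frozen measures only through their $\mathcal{W}_p$-moments, which will be needed in the subsequent passage to the limit along the Euler scheme.
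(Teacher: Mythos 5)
Your proof is correct and follows essentially the same route as the paper: a backward induction combining the affinity of $b_m$, the matrix-convexity of $\sigma_m$ (Assumption~\ref{IIprime}-(2)), the monotonicity in $\mu$ (Assumption~\ref{IIprime}-(1),(3)), and Lemma~\ref{revisiedjensen}, with the same treatment of the polynomial growth. The introduction of the auxiliary function $\psi$ and the reversed order of the two substitution steps in part $(ii)$ (switching the tail measures $\mu_{m+1:M}\to\nu_{m+1:M}$ before the coefficients at time $m$) are only cosmetic repackagings of the paper's chain of inequalities.
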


\begin{proof} 
\noindent  $(i)$ The function $\Phi_{M}$ is convex in $x_{0:M}$ owing to the hypotheses on $F$. Now assume that $x_{0: m+1}\mapsto\Phi_{m+1}(x_{0:m+1}\,;\mu_{m+1:M})$ is convex. For any $x_{0:m}, y_{0:m}\in (\RD)^{m+1}$ and $\lambda\in[0, 1]$, it follows that 
\begin{align}
&\Phi_{m}\big(\lambda x_{0:m}+(1-\lambda)y_{0:m}\,; \mu_{m:M}\big)\nonumber\\
&\hspace{0.5cm}=\EE \Phi_{m+1}\Big( \lambda x_{0:m}+(1-\lambda)y_{0:m},\; {\color{black}b_{m}\big(\lambda x_{m}+(1-\lambda)y_{m}, \mu_{m}\big)}\nonumber\\
&\hspace{7cm}+\sigma_{m}\big(\lambda x_{m}+(1-\lambda)y_{m}, \mu_{m}\big)Z_{m+1}\,; \mu_{m+1:M}\Big)\nonumber\\
&\hspace{0.5cm}\leq\EE \Phi_{m+1}\Big( \lambda x_{0:m}+(1-\lambda)y_{0:m}, {\color{black}\lambda\, b_{m}\big( x_{m}, \mu_{m}\big)+(1-\lambda)\,b_{m}\big(y_{m}, \mu_{m}\big)}\nonumber\\
&\hspace{3cm}+\big[\lambda \sigma_{m}(x_{m}, \mu_{m})+(1-\lambda)\sigma_{m}(y_{m}, \mu_{m})\big]Z_{m+1}\,;\mu_{m+1:M}\Big) \nonumber\\
&\hspace{1cm}\text{(by Assumption~(\ref{conm1}) and Lemma~\ref{revisiedjensen} since $\Phi_{m+1} (x_{0:m}, \cdot\,; \mu_{m+1:M})$ is convex)}\nonumber\\
&\hspace{0.5cm}\leq \lambda \EE\Big[\Phi_{m+1}\big(x_{0:m}, {\color{black}b_{m}(x_{m}, \mu_{m})}+\sigma_{m}(x_{m}, \mu_{m})Z_{m+1}\,; \mu_{m+1:M}\big)\Big]\nonumber\\
&\hspace{0.5cm}\hspace{0.5cm}+(1-\lambda)\EE\Big[\Phi_{m+1}\big(y_{0:m}, {\color{black}b_{m}\big(y_{m}, \mu_{m}\big)}+\sigma_{m}(y_{m}, \mu_{m})Z_{m+1}\,;  \mu_{m+1:M}\big)\Big]\nonumber\\
&\hspace{1cm}\text{(since $x_{0:m+1}\mapsto\Phi_{m+1} (x_{0:m+1}\,;  \mu_{m+1:M})$ is convex)}\nonumber\\
&\hspace{0.5cm}=\lambda \Phi_{m}(x_{0:m}\,; \mu_{m:M})+(1-\lambda) \Phi_{m}(y_{0:m}\,; \mu_{m:M}).\nonumber
\end{align}
Thus the function $\Phi_{m}(\,\cdot\,;  \mu_{m:M})$ is convex and one concludes by a backward induction.

The function $\Phi_{M}$ has a $p$-polynomial growth by the assumption made on $F$. Now assume that $\Phi_{m+1}$ has a $p$-polynomial growth. As Assumption~\ref{AssumptionI} implies that {\color{black}$b_m$ and $\sigma_{m}$ have} linear growth (see further (\ref{lineargrowth})), it is obvious that $\Phi_{m}$ has  $p$-polynomial growth. 
Thus one concludes by a backward induction.

\noindent$(ii)$ Firstly, it is obvious that for any $\mu_{M}, \nu_{M}\in\mathcal{P}_{1}(\RD)$ such that $\mu_{M}\conright \nu_{M}$, we have
\[\Phi_{M}(x_{0:M}\,;  \mu_{M})=F(x_{0:M})=\Phi_{M}(x_{0:M}\,;  \nu_{M}).\]
Assume that $\Phi_{m+1}(x_{0:m+1}\,;  \cdot\,)$ is non-decreasing with respect to the convex order of $\mu_{m+1:M}$. For any $\mu_{m:M}, \nu_{m:M}\in\big(\mathcal{P}_{1}(\RD)\big)^{M-m+1}$ such that $\mu_{i}\conright\nu_{i}, i=m, \ldots, M,$ we have
\begin{align}
&\Phi_{m}(x_{0:m}\,;  \mu_{m:M})=\EE \Big[ \Phi_{m+1}\big(x_{0:m}, {\color{black}b_m(x_{m}, \mu_{m})}+\sigma_{m}(x_{m}, \mu_{m})Z_{m+1}\,;  \mu_{m+1: M}\big)\Big]\nonumber\\
&\hspace{0.5cm}\leq \EE \Big[ \Phi_{m+1}\big(x_{0:m}, {\color{black}b_m(x_{m}, \nu_{m})}+\sigma_{m}(x_{m}, \nu_{m})Z_{m+1}\,;  \mu_{m+1: M}\big)\Big]\nonumber\\
&\hspace{1cm}\text{(by Assumption~(\ref{conm2}), (\ref{assmbm}) and Lemma~\ref{revisiedjensen} since $\Phi{_{m+1}} (x_{0:m}, \cdot\,;  \mu_{m+1:M})$ is convex)}\nonumber\\
&\hspace{0.5cm}\leq \EE\Big[ \Phi_{m+1}\big(x_{0:m},  {\color{black}b_m(x_{m}, \nu_{m})}+\sigma_{m}(x_{m}, \nu_{m})Z_{m+1}\,;  \nu_{m+1: M}\big)\Big]\nonumber\\
&\hspace{1cm}\hspace{0.2cm}\text{(by the assumption on $\Phi_{m+1}$)}\nonumber\\
&\hspace{0.5cm}=\Phi_{m}(x_{0:m}\,;  \nu_{m:M}).\nonumber
\end{align}
Then one concludes by a backward induction. 
\end{proof}

As $F$ has an  $p$-polynomial growth, then the integrability of $F(\bar{X}_{0}, \ldots, \bar{X}_{M}) \text{ and } F(\bar{Y}_{0}, \ldots, \bar{Y}_{M})$ is guaranteed by Proposition~\ref{cvgeuler} since $X_{0}, Y_{0}\in L^{p}(\mathbb{P})$.
We define for every $m=0, \ldots, M$, 
\[\mathcal{X}_{m}\coloneqq \EE \big[F(\bar{X}_{0}, \ldots, \bar{X}_{M})\;\big|\;\mathcal{F}_{m}\big].\]
Recall the notation $\bar{\mu}_{m}\coloneqq \mathbb{P}_{\bar{X}_{m}}\coloneqq \mathbb{P}\circ \bar{X}_{m}^{-1}, \,m=0, \ldots, M.$

\smallskip
\begin{lem}\label{martingale}
For every $m=0, \ldots, M$, $\Phi_{m}(\bar{X}_{0:m}\,; \bar{\mu}_{m:M})=\mathcal{X}_{m}$.
\end{lem}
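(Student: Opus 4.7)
The plan is to prove the equality by backward induction on $m$, descending from $M$ down to $0$, exploiting the tower property of conditional expectation together with the recursive definition of $\Phi_m$ given in \eqref{defPhi1}--\eqref{defPhi2}.

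The base case $m=M$ is immediate from the definition: $\Phi_M(\bar{X}_{0:M}; \bar{\mu}_M)=F(\bar{X}_0,\dots,\bar{X}_M)=\mathcal{X}_M$, where the second equality is just the fact that $F(\bar X_0,\dots,\bar X_M)$ is $\mathcal{F}_M$-measurable.

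For the inductive step, assume that $\Phi_{m+1}(\bar{X}_{0:m+1}; \bar{\mu}_{m+1:M})=\mathcal{X}_{m+1}$. By the tower property,
\[
\mathcal{X}_m=\mathbb{E}\bigl[\mathcal{X}_{m+1}\,\big|\,\mathcal{F}_m\bigr]=\mathbb{E}\bigl[\Phi_{m+1}(\bar{X}_{0:m+1};\bar{\mu}_{m+1:M})\,\big|\,\mathcal{F}_m\bigr].
\]
Now use the Euler recursion $\bar{X}_{m+1}=b_m(\bar{X}_m,\bar{\mu}_m)+\sigma_m(\bar{X}_m,\bar{\mu}_m)Z_{m+1}$. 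The crucial points are: (i) the tuple $\bar{X}_{0:m}$ is $\mathcal{F}_m$-measurable, (ii) the increment $Z_{m+1}$ is independent of $\mathcal{F}_m$, and (iii) the family of probability distributions $\bar{\mu}_{m:M}$ is \emph{deterministic}, hence can be treated as a fixed parameter. Applying the standard ``freezing'' lemma (replacing $\mathcal{F}_m$-measurable variables by dummy variables and then substituting back) gives
\[
\mathbb{E}\bigl[\Phi_{m+1}\bigl(\bar{X}_{0:m},b_m(\bar{X}_m,\bar{\mu}_m)+\sigma_m(\bar{X}_m,\bar{\mu}_m)Z_{m+1};\bar{\mu}_{m+1:M}\bigr)\,\big|\,\mathcal{F}_m\bigr]=\Psi(\bar{X}_{0:m}),
\]
where
\[
\Psi(x_{0:m}):=\mathbb{E}\bigl[\Phi_{m+1}\bigl(x_{0:m},b_m(x_m,\bar{\mu}_m)+\sigma_m(x_m,\bar{\mu}_m)Z_{m+1};\bar{\mu}_{m+1:M}\bigr)\bigr].
\]
By the very definition \eqref{defPhi2} of $\Phi_m$, the right-hand side is exactly $\Phi_m(\bar{X}_{0:m};\bar{\mu}_{m:M})$, which closes the induction.

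I do not expect a genuine obstacle here. The only points requiring minor care are the integrability needed to apply the freezing lemma (which follows from Lemma~\ref{propphi}$(i)$, giving $p$-polynomial growth of $\Phi_{m+1}$, combined with the $L^p$-moment bound on $\bar{X}_{0:m}$ from Proposition~\ref{cvgeuler}$(a)$) and the fact that the measure-valued argument $\bar{\mu}_{m:M}$ is non-random, so that no additional measurability or conditioning subtlety appears when plugging it into $\Phi_{m+1}$.
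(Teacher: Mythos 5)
Your proof is correct and follows essentially the same route as the paper: backward induction from $m=M$, the tower property, and the evaluation of the conditional expectation via independence of $Z_{m+1}$ from $\mathcal{F}_m$ (the paper phrases this step through the operator $Q_{m+1}$, which is exactly your freezing argument). Your explicit remarks on integrability and on the deterministic nature of $\bar\mu_{m:M}$ are fine additions but do not change the argument.
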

\begin{proof}
It is obvious that $\Phi_{M}(\bar{X}_{0:M}\,; \bar{\mu}_{M})=F(\bar{X}_{0}, \ldots, \bar{X}_{M})=\mathcal{X}_{M}.$
Assume that $\Phi_{m+1}(\bar{X}_{0:m+1}\,; \bar{\mu}_{m+1:M})=\mathcal{X}_{m+1}$. Then 
\begin{align}
\mathcal{X}_{m}&=\EE \big[\mathcal{X}_{m+1}\mid\mathcal{F}_{m}\big]
=\EE \big[\Phi_{m+1}(\bar{X}_{0:m+1}\,;\bar{\mu}_{m+1:M}) \mid \mathcal{F}_{m}\big]\nonumber\\
&=\EE \big[\Phi_{m+1}(\bar{X}_{0:m},  b_{m}(\bar{X}_{m}, \bar{\mu}_{m})+\sigma_{m}(\bar{X}_{m}, \bar{\mu}_{m})Z_{m+1}\,; \bar{\mu}_{m+1:M})\mid \mathcal{F}_{m}\big]\nonumber\\
&=\big(Q_{m+1}\Phi_{m+1}(\bar{X}_{0:m}, \cdot\,; \bar{\mu}_{m+1:M})\big)\big(\bar{X}_{m}, \bar{\mu}_{m}, \sigma_{m}(\bar{X}_{m}, \bar{\mu}_{m})\big)=\Phi_{M}(\bar{X}_{0:m}\,; \bar{\mu}_{m:M}).\nonumber
\end{align}
Then a backward induction completes the proof.
\end{proof}

Similarly, we define $\Psi_{m}:(\RD)^{m+1}\times \big(\mathcal{P}_{1}(\RD)\big)^{M-m+1}\rightarrow \RR,  \,m=0, \ldots, M$ by
\begin{align}
&\Psi_{M}(x_{0:M}\,; \mu_{M})\hspace{0.25cm}\coloneqq F(x_{0:M})\nonumber\\
&\Psi_{m}(x_{0:m}\,; \mu_{m:M})\coloneqq \big(Q_{m+1}\Psi_{m+1}(x_{0:m}, \;\cdot\;\,; \mu_{m+1:M})\big)
\big(x_{m}, \mu_{m}, \theta_{m}(x_{m}, \mu_{m})\big)\nonumber\\
\label{defpsi}
&\hspace{2.7cm}=\EE \Big[\Psi_{m+1}\big(x_{0:m},  b_{m}(x_{m}, \mu_{m})+\theta_{m}(x_{m}, \mu_{m})Z_{m+1}\,; \mu_{m+1:M}\big)\Big]. 
\end{align}
Recall the notation $\bar{\nu}_{m}\coloneqq \mathbb{P}_{\bar{Y}_{m}}$.  By using the same  method of  proof as for Lemma~\ref{martingale}, we get
\[\Psi_{m}(\bar{Y}_{0:m}\,; \bar{\nu}_{m:M})=\EE \big[F(\bar{Y}_{0}, \ldots, \bar{Y}_{M})\mid \mathcal{F}_{m}\big].\] 

\begin{proof}[Proof of Proposition~\ref{funconvexEuler}]

We first prove by a backward induction that for every $m=0, \ldots, M$, $\Phi_{m}\leq\Psi_{m}$. 

It follows from the definition of $\Phi_{M}$ and $\Psi_{M}$ that $\Phi_{M}=\Psi_{M}$. Assume now $\Phi_{m+1}\leq \Psi_{m+1}$. For any $x_{0:m}\in(\RD)^{m+1}$ and $\mu_{m:M}\in\big(\mathcal{P}_{1}(\RD)\big)^{M-m+1}$, we have 
\begin{align}
&\Phi_{m}(x_{0:m}\,;\mu_{m:M})\nonumber\\
&=\EE\big[\Phi_{m+1}\big(x_{0:m}, b_{m}(x_{m}, \mu_{m})+\sigma_{m}(x_{m}, \mu_{m})Z_{m+1}\,; \mu_{m+1:M}\big)\big]\nonumber\\
&\leq\EE\big[\Phi_{m+1}\big(x_{0:m}, b_{m}(x_{m}, \mu_{m})+\theta_{m}(x_{m}, \mu_{m})Z_{m+1}\,; 
\mu_{m+1:M}\big)\big]\nonumber\\
&\quad\text{(by Assumption~(\ref{conm3}) and Lemma~\ref{revisiedjensen}, since Lemma~\ref{propphi} shows that $\Phi_{m+1}$ is  convex in $x_{0: m+1}$) }\nonumber\\
&\leq\EE\big[\Psi_{m+1}\big(x_{0:m}, b_{m}(x_{m}, \mu_{m})+\theta_{m}(x_{m}, \mu_{m})Z_{m+1}\,; \mu_{m+1:M}\big)\big]=\Psi_{m}(x_{0:m}\,; \mu_{m:M}).\nonumber
\end{align}
Thus,  the backward induction is completed and 
\begin{equation}\label{phipsiorder}
\forall\, \, m=0, \ldots, M, \quad \Phi_{m}\leq\Psi_{m}.
\end{equation}
Consequently, 
\begin{align}
\EE\big[ F(\bar{X}_{0}, \ldots, \bar{X}_{M})\big]&=\EE \Phi_{0}(\bar{X}_{0}\,; \bar{\mu}_{0:M})\hspace{0.9cm}\text{(by Lemma~\ref{martingale})}\nonumber\\
&\leq\EE \Phi_{0}(\bar{Y}_{0}\,; \bar{\mu}_{0:M})\hspace{1cm}\text{(by Lemma~\ref{propphi}-$(i)$ since }  \bar{X}_{0}=X_0\conright Y_0=\bar{Y}_{0})\nonumber\\
&\leq \EE \Phi_{0}(\bar{Y}_{0}\,; \bar{\nu}_{0:M})  \hspace{1cm}\,\text{(by Lemma~\ref{propphi}-$(ii)$ and Proposition~\ref{marginalconvex})}\nonumber\\
&\leq \EE \Psi_{0}(\bar{Y}_{0}\,; \bar{\nu}_{0:M}) \hspace{1cm}\,\text{(by~(\ref{phipsiorder}))}\nonumber\\
&=\EE \big[F(\bar{Y}_{0}, \ldots, \bar{Y}_{M})\big].\nonumber \hspace{7cm}\qedhere
\end{align}
\end{proof}


\section{Functional convex order for the McKean-Vlasov process}\label{convprocess}

This section is devoted to prove Theorem~\ref{mainthmconv}-$(a)$.
Recall that $t_{m}^{M}=m\cdot \frac{T}{M}, m=0,  \ldots, M$. We define two interpolators as follows. 

\begin{defn}
$(i)$ For every integer $M\geq1$, we define the piecewise affine interpolator $i_{M}: x_{0:M}\in(\RD)^{M+1}\mapsto i_{M}(x_{0:M})\in\mathcal{C}([0, T], \RD)$ by 
\begin{flalign}
& \forall\, m=0, \ldots, M-1, \;\forall\, t\in[t_{m}^{M}, t_{m+1}^{M}],\hspace{0.3cm}i_{M}(x_{0:M})(t)=\frac{M}{T}\big[(t_{m+1}^{M}-t)x_{m}+(t-t^{M}_{m})x_{m+1}\big].&\nonumber
\end{flalign}
$(ii)$ For every $M\geq 1$, we define the functional interpolator $I_{M}: \mathcal{C}\big([0, T], \RD\big)\rightarrow\mathcal{C}\big([0, T], \RD\big)$ by 
\[\forall\, \, \alpha \in \mathcal{C}([0, T], \RD), \hspace{1cm} I_{M}(\alpha)=i_{M}\big(\alpha(t_{0}^{M}), \ldots, \alpha(t_{M}^{M})\big).\]
\end{defn}

It is obvious that 
\begin{equation}\label{supinterpolator}
\forall\, \, x_{0:M}\in(\RD)^{M+1},\hspace{0.5cm} \vertii{i_{M}(x_{0:M})}_{\sup}\textcolor{black}{=} \max_{0\leq m\leq M}\left|x_{m}\right|
\end{equation}
since the norm $\left|\cdot\right|$ is convex. Consequently, 
\begin{equation}\label{supinterpolator2}
\forall\, \, \alpha\in\mathcal{C}([0, T], \RD), \hspace{0.8cm} \vertii{I_{M}(\alpha)}_{\sup}\leq\vertii{\alpha}_{\sup}.
\end{equation}
Moreover, for any  $\alpha\in\mathcal{C}([0, T], \RD)$, we have
\begin{equation}\label{imconv}
\vertii{I_{M}(\alpha)-\alpha}_{\sup}\leq w(\alpha, \tfrac{T}{M}), 
\end{equation}
where $w$ denotes the uniform continuity modulus of $\alpha$. 
The proof of Theorem~\ref{mainthmconv}-$(a)$ relies on the following lemma. 
\begin{lem}[Lemma 2.2 in~\cite{pages2016convex}]\label{Imlemma}
Let $X^{M}, M\geq1$, be a sequence of continuous processes weakly converging towards $X$ as $M\rightarrow +\infty$ for the $\left\Vert\cdot\right\Vert_{sup}$-norm topology. Then, the sequence of interpolating processes $\widetilde{X}^{M}=I_{M}(X^{M}), M\geq 1$ is weakly converging toward $X$ for the $\left\Vert\cdot\right\Vert_{\sup}$-norm topology.
\end{lem}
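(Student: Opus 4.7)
The plan is to reduce to almost sure convergence via Skorokhod's representation theorem, and then exploit the quantitative control~\eqref{imconv} together with the uniform continuity of the limit process. Since $\mathcal{C}([0,T],\RD)$ equipped with $\|\cdot\|_{\sup}$ is a Polish space, Skorokhod's theorem produces a probability space carrying copies $(\tilde X^{M})_{M\ge 1}$ and $\tilde X$ of $X^{M}$ and $X$ such that $\|\tilde X^{M}-\tilde X\|_{\sup}\to 0$ almost surely. Since weak convergence depends only on distributions, it suffices to prove $I_{M}(\tilde X^{M})\to \tilde X$ almost surely for the sup norm, as almost sure convergence on a Polish space implies weak convergence of the laws.

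Next, I would invoke the triangle inequality
\[
\|I_{M}(\tilde X^{M})-\tilde X\|_{\sup}\le \|I_{M}(\tilde X^{M})-\tilde X^{M}\|_{\sup}+\|\tilde X^{M}-\tilde X\|_{\sup},
\]
and bound the first summand using~\eqref{imconv}, namely $\|I_{M}(\tilde X^{M})-\tilde X^{M}\|_{\sup}\le w(\tilde X^{M},T/M)$, where $w(\alpha,\cdot)$ denotes the uniform continuity modulus of $\alpha\!\in\mathcal{C}([0,T],\RD)$. The elementary estimate
\[
|\tilde X^{M}(s)-\tilde X^{M}(t)|\le 2\|\tilde X^{M}-\tilde X\|_{\sup}+|\tilde X(s)-\tilde X(t)|
\]
yields
\[
w(\tilde X^{M},T/M)\le 2\|\tilde X^{M}-\tilde X\|_{\sup}+w(\tilde X,T/M).
\]

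Because $\tilde X$ is a continuous function on the compact interval $[0,T]$, it is uniformly continuous, so $w(\tilde X,T/M)\to 0$ pathwise as $M\to\infty$; combined with $\|\tilde X^{M}-\tilde X\|_{\sup}\to 0$ almost surely, this gives $w(\tilde X^{M},T/M)\to 0$ almost surely. Putting everything together yields $\|I_{M}(\tilde X^{M})-\tilde X\|_{\sup}\to 0$ almost surely, which establishes the announced weak convergence.

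The only mildly delicate point is the use of Skorokhod's representation in a functional setting, but this is routine since $(\mathcal{C}([0,T],\RD),\|\cdot\|_{\sup})$ is a separable complete metric space. The remaining analytic ingredients, the bound~\eqref{imconv} on $\|I_{M}(\alpha)-\alpha\|_{\sup}$ and the equicontinuity transfer from a uniformly convergent sequence to its limit, are purely deterministic.
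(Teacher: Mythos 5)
Your proof is correct. Note that the paper itself does not prove this lemma at all: it is quoted verbatim from Lemma~2.2 of~\cite{pages2016convex}, so there is no in-paper argument to match against. Your Skorokhod-representation route is a legitimate, self-contained alternative: you transfer the problem to almost sure convergence in $(\mathcal{C}([0,T],\RD),\Vert\cdot\Vert_{\sup})$, which is licit because this space is Polish and $I_M$ is ($\Vert\cdot\Vert_{\sup}$-Lipschitz, hence) measurable, so $I_M(\tilde X^M)$ has the law of $I_M(X^M)$; then the deterministic estimate~\eqref{imconv} together with the modulus transfer $w(\tilde X^M,T/M)\le 2\Vert \tilde X^M-\tilde X\Vert_{\sup}+w(\tilde X,T/M)$ and the uniform continuity of the limit path closes the argument. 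The more classical route (the one usually given for this statement) avoids Skorokhod: one shows $\Vert I_M(X^M)-X^M\Vert_{\sup}\le w(X^M,T/M)\to 0$ in probability, using that weak convergence in $\mathcal{C}$ entails tightness and hence, by Arzel\`a--Ascoli, $\lim_{\delta\to 0}\limsup_M \mathbb{P}\big(w(X^M,\delta)\ge \varepsilon\big)=0$, and then concludes by the converging-together (Slutsky-type) lemma. Both arguments hinge on the same estimate~\eqref{imconv}; yours trades the tightness bookkeeping for an appeal to the representation theorem, which is a fair exchange and fully rigorous here.
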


\begin{proof}[Proof of Theorem~\ref{mainthmconv}-$(a)$]
Let $M\in\mathbb{N}^{*}.$ 
Let $(\bar{X}_{t_{m}}^{M})_{m=0, \ldots, M}$ and $(\bar{Y}_{t_{m}}^{M})_{m=0, \ldots, M}$ denote the Euler scheme defined in~(\ref{defeulerx}) and~(\ref{defeulery}). Let $\bar{X}^{M}\coloneqq(\bar{X}^{M}_{t})_{t\in[0, T]}$, $\bar{Y}^{M}\coloneqq(\bar{Y}^{M}_{t})_{t\in[0, T]}$  denote the continuous Euler scheme of $(X_{t})_{t\in[0, T]}$, $(Y_{t})_{t\in[0, T]}$ defined by (\ref{defeulercontinuousx}) and (\ref{defeulercontinuousy}). 
By Proposition~\ref{cvgeuler}, there exists a constant $\tilde{C}$ such that 
\begin{align}\label{supxy}
&\vertii{\sup_{t\in[0, T]}\left|\bar{X}_{t}^{M}\right|}_{p}\vee\vertii{\sup_{t\in[0, T]}\left|X_{t}\right|}_{p}\leq \tilde{C}(1+\vertii{X_{0}}_{p})<+\infty,\nonumber\\
&\vertii{\sup_{t\in[0, T]}\left|\bar{Y}_{t}^{M}\right|}_{p}\vee\vertii{\sup_{t\in[0, T]}\left|Y_{t}\right|}_{p}\leq \tilde{C}(1+\vertii{Y_{0}}_{p})<+\infty
\end{align}
as $1\leq r\leq p$ and $X_{0}, Y_{0}\in L^{p}(\mathbb{P})$. 
Hence, $F(X)$ and $F(Y)$ are in $L^{1}(\mathbb{P})$ since $F$ has a $r$-polynomial growth.

We define a function $F_{M}: (\RD)^{M+1}\rightarrow \RR$ by 
\begin{equation}
x_{0:M}\in(\RD)^{M+1}\mapsto F_{M}(x_{0:M})\coloneqq F\big(i_{M}(x_{0:M})\big).
\end{equation}
The function $F_{M}$ is obviously convex since $i_{M}$ is a linear application. Moreover, $F_{M}$ has also an $r$-polynomial growth (on $\RR^{M+1}$) by~(\ref{supinterpolator}).

Furthermore, we have $I_{M}(\bar{X}^{M})=i_{M}\big((\bar{X}_{t_{0}}^{M}, \ldots, \bar{X}_{t_{M}}^{M})\big)$ by the definition of the continuous Euler scheme and the interpolators $i_{M}$ and $I_{M}$, so that 
\[F_{M}(\bar{X}_{t_{0}}^{M}, \ldots, \bar{X}_{t_{M}}^{M})=F\Big(i_{M}\big((\bar{X}_{t_{0}}^{M}, \ldots, \bar{X}_{t_{M}}^{M})\big)\Big)=F\big(I_{M}(\bar{X}^{M})\big).\]
It follows from Proposition~\ref{funconvexEuler} that 
\begin{align}\label{im}
\EE &F\big(I_{M}(\bar{X}^{M})\big)=\EE F\big(i_{M}(\bar{X}_{0}^{M}, \ldots, \bar{X}_{M}^{M})\big)=\EE F_{M}\big(\bar{X}_{0}^{M}, \ldots, \bar{X}_{M}^{M}\big)\nonumber\\
&\hspace{1cm}\leq \EE F_{M}\big(\bar{Y}_{0}^{M}, \ldots, \bar{Y}_{M}^{M}\big)=\EE F\big(i_{M}(\bar{Y}_{0}^{M}, \ldots, \bar{Y}_{M}^{M})\big)=\EE F\big(I_{M}(\bar{Y}^{M})\big).
\end{align}

The function $F$ is $\vertii{\cdot}_{\sup}$-continuous since it is convex with $\vertii{\cdot}_{\sup}$-polynomial growth (see Lemma 2.1.1 in~\cite{MR2179578}). 
Moreover the process $\bar{X}^{M}$ weakly converges for the $\sup$-norm topology to $X$ as $M\rightarrow+\infty$ as a consequence of Proposition~\ref{cvgeuler}.  Then $I_{M}(\bar{X}^{M})$ weakly converges  for the $\sup$-norm topology   to $X$ owing to Lemma~\ref{Imlemma}.  This proves that  $F\big(I_{M}(\bar{X}^{M})\big)$ weakly converges toward $F(X)$ and, similarly, that  $F\big(I_{M}(\bar{Y}^{M})\big)$ weakly converges toward $F(Y)$. Moreover, as $F$ has a $p$-polynomial growth, we have 
\[
F\big(I_M(\bar{X}^{M})\big)\leq C\big( 1 + \vertii{I_M(\bar{X}^{M})}_{\sup}^{p}\big)\leq C\big( 1 + \vertii{\bar{X}^{M}}_{\sup}^{p}\big)
\]
where the last inequality follows from \eqref{supinterpolator2}. It follows from Proposition \ref{cvgeuler} that 
\[\EE \vertii{\bar{X}^{M}}_{\sup}^{p}\rightarrow \EE \vertii{X}_{\sup}^{p}\quad \text{as}\: M\rightarrow +\infty. \]
Then one derives that $\EE\, F\big(I_{M}(\bar{X}^{M})\big)\to \EE\, F(X)$ as $M\to +\infty$. 
The same reasoning shows that $\EE\, F\big(I_{M}(\bar{Y}^{M})\big)\to \EE\, F(Y)$.
%
Finally, one derives   by letting $M\to +\infty$ in inequality~(\ref{im}) that 
\[\EE F(X)\leq \EE F(Y). \hfill\qedhere\]


\end{proof}

\begin{rem}
The functional convex order result, in a general setting, can be used to establish a robust option price bound (see e.g.~\cite{alfonsi2018sampling}). However, in the McKean-Vlasov setting, the functional convex order result Theorem~\ref{mainthmconv} is established by using the \textit{theoretical} Euler scheme \eqref{defeulerx} and \eqref{defeulery} which can not be directly simulated so that there are still some work to do to produce simulable approximations which are consistent for the convex order. 
One simulable approximation of the McKean-Vlasov equation is the particle method (see e.g. \cite{bossy1997stochastic}, \cite{MR1910635} and \cite[Section 7.1]{liu:tel-02396797} among many other references), which, in the context of this paper, can be written as follows:  for  $n=1, \ldots, N $, 
\begin{equation}\label{computable}
\begin{cases}
\bar{X}^{n, N}_{t_{m+1}}\!=\bar{X}^{n, N}_{t_{m}}\!+h\cdot b(t_{m}, \bar{X}^{n, N}_{t_{m}}\!, \frac{1}{N}\sum_{n=1}^{N}\delta_{\bar{X}^{n, N}_{t_{m}}}) +\sqrt{ h}\cdot \sigma(t_{m}, \bar{X}^{n, N}_{t_{m}}\!, \frac{1}{N}\sum_{n=1}^{N}\delta_{\bar{X}^{n, N}_{t_{m}}}) Z_{m+1}^{n}, \\
\bar{Y}^{n, N}_{t_{m+1}}=\bar{Y}^{n, N}_{t_{m}}+h\cdot b(t_{m}, \bar{Y}^{n, N}_{t_{m}}, \frac{1}{N}\sum_{n=1}^{N}\delta_{\bar{Y}^{n, N}_{t_{m}}})+\sqrt{ h}\cdot \theta(t_{m}, \bar{Y}^{n, N}_{t_{m}}, \frac{1}{N}\sum_{n=1}^{N}\delta_{\bar{Y}^{n, N}_{t_{m}}}) Z_{m+1}^{n},
\end{cases}
\end{equation}
where $\bar{X}_{0}^{n, N}, 1\leq n\leq N,\widesim{\,i.i.d\,}X_{0}$, $\bar{Y}_{0}^{n, N}, 1\leq n\leq N\widesim{\,i.i.d\,}Y_{0}$, $t_{m}=t_{m}^{M}\coloneqq m\cdot\frac{T}{M},\, M\in\mathbb{N}^{*}$ and $Z_{m}^{n}, 0\leq n\leq N, 0\leq m\leq M, \widesim{\,i.i.d\,}\mathcal{N}(0, \mathbf{I}_{q})$. 

Unfortunately, this scheme (\ref{computable}) 
based on particles does not propagate nor preserve the convex order as in Proposition~\ref{marginalconvex} since we cannot obtain for a convex function $\varphi$ that,
\[\frac{1}{N}\sum_{n=1}^{N}\varphi\big(X_{t_{m}}^{n, N}(\omega)\big)\leq\frac{1}{N}\sum_{n=1}^{N}\varphi\big(Y_{t_{m}}^{n, N}(\omega)\big), \quad a.s.\]
under the condition that  $X_{t_{m}}^{n, N}\conright Y_{t_{m}}^{n, N}$, $n=1, \ldots, N$,
even if the random variables $X_{t_{m}}^{n, N}$, $n=1, \ldots, N$ and  $Y_{t_{m}}^{n, N}$, $n=1, \ldots, N$ were both i.i.d. (see again~\cite{alfonsi2018sampling}).
\end{rem}

 
\subsection{Extension of the functional convex order result}

This section is devoted to the proof of  Theorem~\ref{mainthmconv}-$(b)$. 
We first discuss the marginal distribution space for the strong solutions $X=(X_{t})_{t\in [0, T]}$ and $Y=(Y_{t})_{t\in [0, T]}$ of  equations~(\ref{defconvx}) and~(\ref{defconvy}). 
By Proposition~\ref{cvgeuler}, $X, Y\in L_{\CRD}^{p}(\Omega, \mathcal{F}, \mathbb{P})$ then their probability distributions $\mu, \nu$  naturally lie in 
\begin{align}
&\mathcal{P}_{p}\big(\mathcal{C}([0, T], \mathbb{R}^{d})\big) \nonumber\\
&\quad\coloneqq\left\{\mu \text{ probability distribution on }\mathcal{C}([0, T], \mathbb{R}^{d}) \text{ s.t. } \int_{\mathcal{C}([0, T], \mathbb{R}^{d})}\left\Vert \alpha\right\Vert_{\sup}^{p}\mu(d\alpha)<+\infty\right\}.\nonumber
\end{align}
We define an $L^{p}$-\textit{Wasserstein distance} $\mathbb{W}_{p}$ on $\mathcal{P}_{p}\big(\mathcal{C}([0, T], \mathbb{R}^{d})\big)$ by
\begin{align}\label{defwasC}
&\forall\, \, \mu, \nu\in\mathcal{P}_{p}\big( \mathcal{C}([0, T], \mathbb{R}^{d})\big),\nonumber\\
&\hspace{2cm}\mathbb{W}_{p}(\mu, \nu)\coloneqq\Big{[}\inf_{\pi\in\Pi(\mu,\nu)}\int_{\mathcal{C}([0, T], \mathbb{R}^{d})\times \mathcal{C}([0, T], \mathbb{R}^{d})}\left\Vert x-y\right\Vert_{\sup}^{p}\pi(dx,dy)\Big{]}^{\frac{1}{p}},
\end{align}
where $\Pi(\mu,\nu)$ denotes the set of probability measures on $\mathcal{C}([0, T], \mathbb{R}^{d})\times \mathcal{C}([0, T], \mathbb{R}^{d})$ 
with respective marginals  $\mu$ and $\nu$. The space $\mathcal{P}_{p}\big(\mathcal{C}([0, T], \mathbb{R}^{d})\big)$ equipped with $\mathbb{W}_{p}$ is complete and separable since $\big(\mathcal{C}([0, T], \mathbb{R}^{d}), \left\Vert\cdot\right\Vert_{\sup}\big)$ is a Polish space (see~\cite{bolley2008separability}). 

Now, we prove that for any stochastic process $X=(X_{t})_{t\in[0, T]}\in L_{\CRD}^{p}(\Omega, \mathcal{F}, \mathbb{P})$, its marginal distribution $(\mu_{t})_{t\in[0, T]}$ lies in $\CPP$. 
For any $t\in[0, T]$, we define $\pi_{t}: \CRD\rightarrow\mathbb{R}^{d}$ by $\alpha\mapsto\pi_{t}(\alpha)=\alpha_{t}$ and we define 
\[\iota: \mu\in\PPC \,\longmapsto \,\iota(\mu)\coloneqq(\mu\circ\pi_{t}^{-1})_{t\in[0, T]}=(\mu_{t})_{t\in[0, T]}\in \CPP.\]

\begin{lem}\label{injectionmeasure}
The application $\iota$ 
is well-defined. 
\end{lem}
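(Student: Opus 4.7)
The plan is to verify two things: first, that for every $t\!\in [0,T]$ the pushforward $\mu_t := \mu\circ \pi_t^{-1}$ lies in $\mathcal{P}_p(\RD)$; second, that the map $t\mapsto \mu_t$ is $\mathcal{W}_p$-continuous on $[0,T]$.

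For the first point, I would observe that $|\pi_t(\alpha)|^p = |\alpha_t|^p \leq \|\alpha\|_{\sup}^p$ for every $\alpha\!\in\CRD$, hence
\[
\int_{\RD} |x|^p \,\mu_t(dx) = \int_{\CRD} |\alpha_t|^p \,\mu(d\alpha) \leq \int_{\CRD} \|\alpha\|_{\sup}^p \,\mu(d\alpha) < +\infty
\]
since $\mu\!\in\PPC$. Thus $\mu_t\!\in\mathcal{P}_p(\RD)$.

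For continuity, I would exploit the natural coupling provided by $\mu$ itself. Fix $t\!\in[0,T]$ and $(t_n)_{n\ge 1}$ with $t_n\to t$. The product measure $(\pi_{t_n},\pi_t)_{\#}\mu$ belongs to $\Pi(\mu_{t_n},\mu_t)$, so by the definition~\eqref{defwas2} of $\mathcal{W}_p$,
\[
\mathcal{W}_p^{\,p}(\mu_{t_n},\mu_t) \;\leq\; \int_{\CRD} |\alpha_{t_n} - \alpha_t|^p \,\mu(d\alpha).
\]
For each $\alpha\!\in \CRD$, the continuity of $\alpha$ yields $|\alpha_{t_n}-\alpha_t|^p\to 0$. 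The integrand is dominated by $(2\|\alpha\|_{\sup})^p$, which is $\mu$-integrable since $\mu\!\in\PPC$. Lebesgue's dominated convergence theorem then gives $\mathcal{W}_p^{\,p}(\mu_{t_n},\mu_t)\to 0$, which establishes the continuity of $t\mapsto\mu_t$ into $\big(\mathcal{P}_p(\RD),\mathcal{W}_p\big)$, hence $\iota(\mu)\!\in\CPP$.

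There is no real obstacle here; the only point that requires a brief justification is the choice of coupling, and this is the standard product coupling of the same path sampled at two different times. The argument is entirely self-contained and does not require any further property of $\mu$ beyond its $p$-th moment integrability of $\|\cdot\|_{\sup}$.
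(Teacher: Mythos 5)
Your proof is correct and takes essentially the same route as the paper: the paper realizes $\mu$ as the law of a path-valued random variable $X$, bounds $\mathcal{W}_p(\mu_{t_n},\mu_t)\le \Vert X_{t_n}-X_t\Vert_p$ via the same synchronous coupling (same path sampled at the two times), and concludes by dominated convergence with the dominating function $\big(2\sup_{s\in[0,T]}|X_s|\big)^p$, which is exactly your argument phrased probabilistically rather than through pushforwards on the path space. One terminological quibble: the coupling $(\pi_{t_n},\pi_t)_{\#}\mu$ you use is the image (pushforward) measure of $\mu$ under $\alpha\mapsto(\alpha_{t_n},\alpha_t)$, not a product measure, though your displayed bound makes clear this is what you intend.
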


{\color{black}The proof of Lemma~\ref{injectionmeasure} is postponed in Appendix~\ref{appc}.} For the functional convex order result for the Euler scheme, like in Section~\ref{eulerconv}, we have the following proposition. 
\begin{prop}\label{convschemG}
Let $\bar{X}_{0:M}, \bar{Y}_{0:M}, \bar{\mu}_{0:M}, \bar{\nu}_{0:M}$ be respectively random variables and probability distributions defined by (\ref{defeulerx}) and (\ref{defeulery}). Under Assumption~\ref{AssumptionI} and~\ref{AssumptionII}, for any function 
\[\tilde{G}: (x_{0:M},\eta_{0:M})\in(\RD)^{M+1}\times \big(\PPRD\big)^{M+1}\:\longmapsto \:\tilde{G}(x_{0:M},\eta_{0:M})\in \RR\]
satisfying the following conditions (i), (ii) and (iii)
\begin{enumerate}[$(i)$]
\item $\tilde{G}$ is convex in $x_{0:M}$,
\item $\tilde{G}$ is non-decreasing in $\mu_{0:M}$ with respect to the convex order in the sense that 
\begin{flalign}
&\forall\, x_{0:M}\!\in(\RD)^{M+1} \text{ and } \forall\mu_{0:M}, \nu_{0:M}\!\in\big(\mathcal{P}_{p}(\RD)\big)^{M+1}\; \text{ s.t. }\;\mu_{i}\conright\nu_{i},\; 0\leq i \leq M,&\nonumber\\
&\hspace{5cm}\tilde{G}(x_{0:M}, \mu_{0:M})\leq\tilde{G}(x_{0:M}, \nu_{0:M}),&\nonumber
\end{flalign}
\item $\tilde{G}$ has a  $p$-polynomial growth in the sense that 
\begin{flalign}
&\exists\,  C\in \mathbb{R}_{+}\text{ s.t. } \forall\, (x_{0:M}, \mu_{0:M})\in(\RD)^{M+1}\times \big(\mathcal{P}_{p}(\RD)\big)^{M+1},&\nonumber\\
&\hspace{3cm}\tilde{G}(x_{0:M}, \mu_{0:M})\leq C \big[1+\sup_{0\leq m\leq M}\left|x_{m}\right|^{p}+\sup_{0\leq m\leq M}\mathcal{W}_{p}^{p}(\mu_{m}, \delta_{0})\big],&
\end{flalign}
\end{enumerate}
we have 
\begin{equation}\label{convgeuler}
\EE \tilde{G}(\bar{X}_{0}, \ldots, \bar{X}_{m}, \bar{\mu}_{0}, \ldots, \bar{\mu}_{M})\leq \EE \tilde{G}(\bar{Y}_{0}, \ldots, \bar{Y}_{m}, \bar{\nu}_{0}, \ldots, \bar{\nu}_{M}).
\end{equation}
\end{prop}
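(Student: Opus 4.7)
The plan is to mimic the backward dynamic programming argument used to prove Proposition~\ref{funconvexEuler}, carrying the full distribution vector $\mu_{0:M}$ as an external parameter throughout the recursion. Concretely, I would define two families of functions $\widetilde{\Phi}_m, \widetilde{\Psi}_m : (\RD)^{m+1}\times \bigl(\PPRD\bigr)^{M+1}\to \RR$ by backward induction, starting with
\[
\widetilde{\Phi}_M(x_{0:M};\mu_{0:M}) = \widetilde{\Psi}_M(x_{0:M};\mu_{0:M}) = \tilde{G}(x_{0:M},\mu_{0:M}),
\]
and, for $m=0,\ldots, M-1$,
\[
\widetilde{\Phi}_m(x_{0:m};\mu_{0:M}) = \EE\bigl[\widetilde{\Phi}_{m+1}\bigl(x_{0:m},\,b_m(x_m,\mu_m)+\sigma_m(x_m,\mu_m)Z_{m+1};\,\mu_{0:M}\bigr)\bigr],
\]
and analogously for $\widetilde{\Psi}_m$ with $\theta_m$ in place of $\sigma_m$. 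The indices $0,\ldots,m-1$ of $\mu_{0:M}$ are just carried along as parameters and only $\mu_m$ is actively used at the $m$-th backward step; the footnote following~\eqref{defPhi1} precisely allows this extra dependence.

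The next step is the analogue of Lemma~\ref{propphi}: for every fixed $\mu_{0:M}$, $x_{0:m}\mapsto \widetilde{\Phi}_m(x_{0:m};\mu_{0:M})$ is convex with $p$-polynomial growth, and for every fixed $x_{0:m}$, $\mu_{0:M}\mapsto \widetilde{\Phi}_m(x_{0:m};\mu_{0:M})$ is non-decreasing for the componentwise convex order. Both properties are established by backward induction using exactly the same arguments as in Section~\ref{eulerconv}: convexity propagates through Assumption~\ref{IIprime}-(2) and Lemma~\ref{revisiedjensen}~$(i)$; monotonicity in $\mu_m$ propagates through Assumption~\ref{IIprime}-(1)-(3) together with Lemma~\ref{revisiedjensen}~$(iii)$, and monotonicity in the remaining indices $\mu_i$ with $i\neq m$ is transparent because these indices only appear via the inductive hypothesis on $\widetilde{\Phi}_{m+1}$. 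Exactly the same inductive argument as in Lemma~\ref{martingale} then yields the martingale-type identity
\[
\widetilde{\Phi}_m(\bar X_{0:m};\bar\mu_{0:M}) = \EE\bigl[\tilde G(\bar X_{0:M},\bar\mu_{0:M})\,\big|\,\mathcal{F}_m\bigr],\qquad \widetilde{\Psi}_m(\bar Y_{0:m};\bar\nu_{0:M}) = \EE\bigl[\tilde G(\bar Y_{0:M},\bar\nu_{0:M})\,\big|\,\mathcal{F}_m\bigr],
\]
the integrability being ensured by condition $(iii)$ together with Proposition~\ref{cvgeuler}~$(a)$.

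A further backward induction, combined with Assumption~\ref{IIprime}-(4) (i.e.\ $\sigma_m\preceq\theta_m$) and the convexity of $\widetilde{\Phi}_{m+1}$ in its $(m+1)$-st argument, yields $\widetilde{\Phi}_m\le \widetilde{\Psi}_m$ pointwise, exactly as in the derivation of~\eqref{phipsiorder}. Chaining everything together and exploiting Assumption~\ref{IIprime}-(5) together with the marginal convex order $\bar\mu_m\conright\bar\nu_m$ supplied by Proposition~\ref{marginalconvex}, we obtain the desired chain
\begin{align*}
\EE\,\tilde G(\bar X_{0:M},\bar\mu_{0:M}) &= \EE\,\widetilde{\Phi}_0(\bar X_0;\bar\mu_{0:M}) \leq \EE\,\widetilde{\Phi}_0(\bar Y_0;\bar\mu_{0:M}) \\
&\leq \EE\,\widetilde{\Phi}_0(\bar Y_0;\bar\nu_{0:M}) \leq \EE\,\widetilde{\Psi}_0(\bar Y_0;\bar\nu_{0:M}) = \EE\,\tilde G(\bar Y_{0:M},\bar\nu_{0:M}).
\end{align*}

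The main obstacle, and the only place where the argument differs conceptually from the proof of Proposition~\ref{funconvexEuler}, is in the second inequality above: we need to apply monotonicity of $\widetilde{\Phi}_0$ in the \emph{entire} vector $\mu_{0:M}$ (not just in one terminal argument), and therefore the induction establishing monotonicity of $\widetilde{\Phi}_m$ must be carried out componentwise, taking care that when we switch from $\bar\mu_{0:M}$ to $\bar\nu_{0:M}$ the coefficients $b_m(\,\cdot\,,\mu_m)$ and $\sigma_m(\,\cdot\,,\mu_m)$ used inside the conditional expectations also change consistently. This is exactly where Assumption~\ref{IIprime}-(1) (the drift $b_m$ does not depend on $\mu$ modulo the convex order, so it is insensitive to the substitution) and Assumption~\ref{IIprime}-(3) (monotonicity of $\sigma_m$ in $\mu$) jointly play the crucial role.
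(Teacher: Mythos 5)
Your proposal is correct and follows essentially the same route as the paper: the paper's proof also replaces $\Phi_m,\Psi_m$ by functions $\Phi'_m,\Psi'_m$ on $(\RD)^{m+1}\times\big(\PPRD\big)^{M+1}$ carrying the whole vector $\mu_{0:M}$ as a parameter, defined through the same operator $Q_{m+1}$ with $\sigma_m$ (resp.\ $\theta_m$), and then reruns the backward induction of Proposition~\ref{funconvexEuler} (convexity, componentwise monotonicity, martingale identity, $\Phi'_m\le\Psi'_m$) before chaining with Proposition~\ref{marginalconvex}. Your closing remark on the componentwise monotonicity and the role of Assumption~\ref{IIprime}-(1) and (3) is exactly the point handled by the analogue of Lemma~\ref{propphi}-$(ii)$ in the paper.
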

The proof of Proposition~\ref{convschemG} is quite similar to that of Proposition~\ref{funconvexEuler}. 
We just need to replace the definition of $\Phi_{m}$ and $\Psi_{m}$ in~(\ref{defPhi1}), (\ref{defPhi2}) and (\ref{defpsi}) by the following $\Phi_{m}', \Psi_{m}': (\RD)^{m+1}\times\big(\PPRD\big)^{M+1}\rightarrow\RR$,  $m=0, \ldots, M,$ defined by 
\begin{align}
&\forall\, \, (x_{0:m},\mu_{0:M})\in (\RD)^{m+1}\times \big(\PPRD\big)^{M+1},&\nonumber\\
&\quad\quad\Phi'_{M}(x_{0:M}\,; \mu_{0:M})=\tilde{G}(x_{0:M}, \mu_{0:M}),&\nonumber\\
&\quad\quad \Phi'_{m}(x_{0:m}\,; \mu_{0:M})=\big(Q_{m+1}\Phi'_{m+1}(x_{0:m}, \;\cdot\;\,; \mu_{0:M})\big)\big(x_{m}, \mu_{m}, \sigma_{m}(x_{m}, \mu_{m})\big),&\nonumber \\
&\quad\quad\Psi'_{M}(x_{0:M}\,; \mu_{0:M})=\tilde{G}(x_{0:M}, \mu_{0:M}),&\nonumber\\
&\quad\quad \Psi'_{m}(x_{0:m}\,; \mu_{0:M})=\big(Q_{m+1}\Psi'_{m+1}(x_{0:m}, \;\cdot\;\,; \mu_{0:M})\big)\big(x_{m}, \mu_{m},\theta_{m}(x_{m}, \mu_{m})\big).\nonumber 
\end{align}

\smallskip The key step to prove Theorem~\ref{mainthmconv}$(b)$ starting from (\ref{convgeuler}) is how to define the ``interpolator'' of the marginal distributions $(\bar{\mu}_{t})_{t\in [0, T]}$ and $(\bar{\nu}_{t})_{t\in [0, T]}$.
Let $\lambda\in[0, 1]$. For any two random variables $X_{1}, X_{2}$ with respective probability distributions $\mu_{1}, \mu_{2}\in\PPRD$, we define the linear combination of $\mu_{1}, \mu_{2}$, denoted by $\lambda \mu_{1}+(1-\lambda)\mu_{2}$, by
\begin{equation}\label{defcombmesure}
\forall\, \, A\in\mathcal{B}(\RD), \hspace{0.5cm} \big(\lambda \mu_{1}+(1-\lambda)\mu_{2}\big)(A)\coloneqq \lambda\mu_{1}(A)+(1-\lambda)\mu_{2}(A). 
\end{equation}
In fact, $\lambda \mu_{1}+(1-\lambda)\mu_{2}$ is  the distribution of the random variable
\begin{equation}\label{aid}
\mathbbm{1}_{\{U\leq \lambda\}}X_{1}+\mathbbm{1}_{\{U>\lambda\}}X_{2},
\end{equation}
where $U$ is a random variable with probability distribution $\mathcal{U}([0,1])$, independent of $(X_{1}, X_{2})$. Then it is obvious that $\lambda \mu_{1}+(1-\lambda)\mu_{2}\in \PPRD$. 
Moreover, with the help of the random variable (\ref{aid}), one proves that 
the application $\lambda\in[0, 1]\mapsto \lambda \mu_{1}+(1-\lambda)\mu_{2}\in\PPRD$ is continuous with respect to $\mathcal{W}_{p}$ for a fixed $(\mu_{1}, \mu_{2})\in\big(\PPRD\big)^{2}$.%

From (\ref{defcombmesure}), we can extend the definition of the interpolator $i_{M}$ (\textit{respectively} $I_{M}$) on the probability distribution space $\big(\PPRD\big)^{M+1}$ (\textit{resp.} $\mathcal{C}\big([0, T], \PPRD\big)$ ) as follows 
\begin{align}
 &\forall\, m=0, \ldots, M-1, \;\forall\, t\in[t_{m}^{M}, t_{m+1}^{M}], \nonumber\\
&\quad\forall\, \, \mu_{0:M}\in\big(\PPRD\big)^{M+1},\hspace{1.25cm}i_{M}(\mu_{0:M})(t)=\frac{M}{T}\big[(t_{m+1}^{M}-t)\mu_{m}+(t-t^{M}_{m})\mu_{m+1}\big],&\nonumber\\
&\quad\forall\, \, (\mu_{t})_{t\in[0, T]}\in\mathcal{C}\big([0, T], \PPRD\big),\hspace{1cm}I_{M}\big((\mu_{t})_{t\in[0, T]}\big)=i_{M}\big(\mu_{t_{0}^{M}}, \ldots, \mu_{t_{M}^{M}}\big).\nonumber
\end{align}

We consider now $\bar{X}^{M}=(\bar{X}_{t}^{M})_{t\in[0, T]}$, $\bar{Y}^{M}=(\bar{Y}_{t}^{M})_{t\in[0, T]}$ defined  by~(\ref{defeulercontinuousx}) and~(\ref{defeulercontinuousy}) with respective probability distributions $\bar{\mu}^{M}, \bar{\nu}^{M}\in\mathcal{P}_{p}\big(\mathcal{C}([0, T], \RD)\big)$ (see (\ref{boundedx})). Let $(\bar{\mu}^{M}_{t})_{t\in[0, T]}=\iota(\bar{\mu}^{M})$ and $(\bar{\nu}^{M}_{t})_{t\in[0, T]}=\iota(\bar{\nu}^{M})$. We define now for every $t\in[0, T]$, $\widetilde{\mu}^{M}_{t}\coloneqq I_{M}\big((\bar{\mu}^{M}_{t})_{t\in[0, T]}\big)_{t}$. By the same idea as (\ref{aid}),  {\color{black}for every $t\in[t_{m}^{M}, t_{m+1}^{M}]$},\, $\widetilde{\mu}^{M}_{t}$ is the probability distribution of the random variable
\[\widetilde{X}_{t}^{M}\coloneqq \mathbbm{1}_{\left\{U_{m}\leq \frac{M\left(t_{m+1}^{M}-t\right)}{T}\right\}}\bar{X}^{M}_{t_{m}}+\mathbbm{1}_{\left\{U_{m}> \frac{M\left(t_{m+1}^{M}-t\right)}{T}\right\}}\bar{X}^{M}_{t_{m+1}},\]
where $(U_{0}, \ldots, U_{M})$ is independent to the Brownian motion $(B_{t})_{t\in[0, T]}$ in~(\ref{defconvx}),~(\ref{defconvy}) and then to $(Z_{0}, \ldots, Z_{M})$ in~(\ref{defeulerx}),~(\ref{defeulery}).

Now we prove that $(\widetilde{\mu}^{M}_{t})_{t\in[0, T]}$ converges to the weak solution $(\mu_{t})_{t\in[0, T]}$ of (\ref{defconvx}) with respect to the distance $d_{\mathcal{C}}$ defined in (\ref{distancedc}). 
We know from Proposition~\ref{cvgeuler}-(c) 
that for any $p\geq2$
\begin{equation}\label{conG1}
\sup_{t\in[0, T]}\mathcal{W}_{p}(\mu_{t}, \bar{\mu}^{M}_{t})\rightarrow 0 \,\,\text{ as }\, M\rightarrow+\infty.
\end{equation}
It follows that 
\begin{flalign}
&\forall\, \, m\in\{0, \ldots, M\}, \forall\, \, t\in[t_{m}^{M}, t_{m+1}^{M}], &\nonumber\\
&\hspace{1cm}\mathcal{W}_{p}^{p}(\bar{\mu}^{M}_{t}, \widetilde{\mu}^{M}_{t})\leq \EE \left|\bar{X}^{M}_{t}-\widetilde{X}^{M}_{t}\right|^{p}\nonumber\\
&\hspace{3cm}=\EE \left|\bar{X}^{M}_{t}-\mathbbm{1}_{\left\{U_{m}\leq \frac{M\left(t_{m+1}^{M}-t\right)}{T}\right\}}\bar{X}^{M}_{t_{m}}-\mathbbm{1}_{\left\{U_{m}> \frac{M\left(t_{m+1}^{M}-t\right)}{T}\right\}}\bar{X}^{M}_{t_{m+1}}\right|^{p}&\nonumber\\
&\hspace{3cm}\leq \EE \left|\bar{X}^{M}_{t}-\bar{X}^{M}_{t_{m}}\right|^{p}+\EE \left|\bar{X}^{M}_{t}-\bar{X}^{M}_{t_{m+1}}\right|^{p}.&\nonumber
\end{flalign}
We derive from Proposition~\ref{cvgeuler}-$(b)$ that 
\begin{align}
\forall\, \, s, t\in[t_{m}^{M}, t_{m+1}^{M}], \;s<t, \hspace{1cm}\EE \left|\bar{X}^{M}_{t}-\bar{X}^{M}_{s}\right|^{p}\leq(\kappa\sqrt{\;t-s\;})^{p}\leq \kappa^{p}(\tfrac{T}{M})^{\frac{p}{2}}\rightarrow 0, \text{ as } M\rightarrow+\infty.\nonumber
\end{align}
Thus, we have $\sup_{t\in[0, T]}\mathcal{W}_{p}^{p}(\bar{\mu}^{M}_{t}, \widetilde{\mu}^{M}_{t})\rightarrow 0$ as $M\rightarrow +\infty$. Hence,  
\begin{align}\label{cvgdc}
d_{\mathcal{C}}\big((\widetilde{\mu}^{M}_{t})_{t\in[0, T]}, (\mu_{t})_{t\in[0, T]}\big)=&\sup_{t\in[0, T]}\mathcal{W}_{p}^{p}(\mu_{t}, \widetilde{\mu}^{M}_{t})\leq \sup_{t\in[0, T]}\mathcal{W}_{p}^{p}(\bar{\mu}^{M}_{t}, \mu_{t})+\sup_{t\in[0, T]}\mathcal{W}_{p}^{p}(\bar{\mu}^{M}_{t}, \widetilde{\mu}^{M}_{t})\nonumber\\
&\,\longrightarrow 0 \,\text{ as }\, M\rightarrow+\infty. 
\end{align}

\begin{proof}[Proof of Theorem~\ref{mainthmconv}-$(b)$]

We define for every $(x_{0:M}, \eta_{0:M})\in(\RD)^{M+1}\times\big(\PPRD\big)^{M+1}$, $G_{M}(x_{0:M}, \eta_{0:M})\coloneqq G\big(i_{M}(x_{0:M}), i_{M}(\eta_{0:M})\big)$.  For every fixed $M\in\mathbb{N}^{*}$, we have
\begin{align}
\EE G&\big(I_{M}(\bar{X}^{M}), (\widetilde{\mu}^{M}_{t})_{t\in[0, T]}\big)=\EE G\Big(I_{M}(\bar{X}^{M}),I_{M}\big((\bar{\mu}_{t}^{M})_{t\in[0, T]}\big) \Big)\nonumber\\
&=\EE G\big(i_{M}(\bar{X}_{t_{0}}^{M}, \ldots, \bar{X}_{t_{M}}^{M}), i_{M}(\bar{\mu}_{t_{0}}^{M}, \ldots, \bar{\mu}_{t_{M}}^{M})\big)=\EE G_{M}\big(\bar{X}_{t_{0}}^{M}, \ldots, \bar{X}_{t_{M}}^{M}, \bar{\mu}_{0}^{M}, \ldots, \bar{\mu}_{t_{M}}^{M}\big)\nonumber\\
&\leq \EE G_{M}\big(\bar{Y}_{t_{0}}^{M}, \ldots, \bar{Y}_{t_{M}}^{M}, \bar{\nu}_{t_{0}}^{M}, \ldots, \bar{\nu}_{t_{M}}^{M}\big)\quad\text{(by Proposition~\ref{convschemG})}\nonumber\\
&=\EE G\big(i_{M}(\bar{Y}_{t_{0}}^{M}, \ldots, \bar{Y}_{t_{M}}^{M}), i_{M}(\bar{\nu}_{t_{0}}^{M}, \ldots, \bar{\nu}_{t_{M}}^{M})\big)=\EE G\big(I_{M}\big(\bar{Y}^{M}), (\widetilde{\nu}_{t}^{M})_{t\in[0, T]}\big).\nonumber
\end{align}
\textcolor{black}{Using the continuity assumption on $G$ (see Theorem~\ref{mainthmconv}-$(b)$-$(iii)$) and the convergence in (\ref{cvgdc}) imply  weak convergence of both sequences of random variables. Then using that $G$ has at most $p$-polynomial growth  in both space and measure arguments, on concludes like for claim $(a)$ that $\EE\,  G\big(I_{M}(\bar{X}^{M}), (\widetilde{\mu}^{M}_{t})_{t\in[0, T]}\big) \to \EE\, G\big(X, (\mu_t)_{t\in [0,T]}\big)$ (idem for $Y$)}.  Combining these two properties,  we finally obtain~\eqref{convgpro} by letting $M\rightarrow +\infty$.
\end{proof}



\noindent\textbf{Acknowledgement.} The authors thank both the anonymous reviewer and the associate editor for their careful reading of the paper. They are grateful for their constructive and insightful comments and suggestions. The first author would like to thank Dr. Julien Claisse for his helpful advice. 

\bibliographystyle{alpha}
\bibliography{convex_order}

\appendixtitleon
\appendixtitletocon

\begin{appendices}

\section{Convergence rate of the Euler scheme for the McKean-Vlasov equation}\label{appn} 

We prove Proposition~\ref{cvgeuler} in this section. 
Under Assumption~\ref{AssumptionI}, the functions $b$ and $\sigma$ have a linear growth in $x$ and in $\mu$ in the sense that there exists a constant $C_{b,\sigma, L, T}$ depending on $b, \sigma$, $L$ and $T$ such that for any $(t, x, \mu)\in[0, T]\times \mathbb{R}^{d}\times \mathcal{P}_{p}(\mathbb{R}^{d})$,
\begin{equation}\label{lineargrowth}
\left|b(t, x,\mu)\right|\vee\vertiii{\sigma(t, x,\mu)}\leq C_{b,\sigma, L, T}(1+\left|x\right|+\mathcal{W}_{p}(\mu,\delta_{0})),
\end{equation}
since for any $x\in\mathbb{R}^{d}$ and for any $\mu\in\mathcal{P}_{p}(\mathbb{R}^{d})$, we have for every $t\in[0, T]$,
\[\left|b(t, x, \mu)\right|\leq\left|b(t, 0, \delta_{0})\right| + L\big(\left|x\right|+\mathcal{W}_{p}(\mu, \delta_{0})\big)\leq (\left|b(t, 0, \delta_{0})\right| \vee L)(1+\left|x\right|+\mathcal{W}_{p}(\mu, \delta_{0}))\]
and $\vertiii{\sigma(t, x, \mu)}\leq (\vertiii{\sigma(t, 0,  \delta_{0})} \vee L)(1+\left|x\right|+\mathcal{W}_{p}(\mu,  \delta_{0}))$ by applying (\ref{assumplip}) so that one can take \textit{e.g.} $C_{b,\sigma, L, T}\coloneqq\sup_{t\in[0, T]}\left|b(t, 0, \delta_{0})\right|\vee\sup_{t\in[0, T]}\vertiii{\sigma(t, 0, \delta_{0})}\vee L$.

Moreover, the definition of continuous time Euler scheme (\ref{defeulercontinuousx}) implies that 
 $\bar{X}\coloneqq(\bar{X}_{t})_{t\in[0, T]}$ is a $\CRD$-valued stochastic process. 
Let $\bar{\mu}$ denote the probability distribution of $\bar{X}$ and for every $t\in[0, T]$, let $\bar{\mu}_{t}$ denote the marginal distribution of $\bar{X}_{t}$. 
Then $(\bar{X}_{t})_{t\in[0, T]}$ is the solution of 
\begin{equation}\label{eulerconti2}
\begin{cases}
d\bar{X}_{t}=b(\underline{t}, \bar{X}_{\underline{t}}, \bar{\mu}_{\underline{t}})dt+\sigma(\underline{t}, \bar{X}_{\underline{t}}, \bar{\mu}_{\underline{t}})dB_{t},\\
\bar{X}_{0}=X_{0},
\end{cases}
\end{equation}
where for every $t\in[t_{m}, t_{m+1})$, $\underline{t}\coloneqq t_{m}$. 

Now we recall a variant of Gronwall's Lemma (see Lemma~7.3 in~\cite{pages2018numerical} for a proof) and two important technical tools used throughout the proof: the generalized Minkowski Inequality and the Burk\"older-Davis-Gundy Inequality.  We refer to~\cite[Section 7.8]{pages2018numerical} and \cite[Chapter IV - Section 4]{MR1725357} for proofs  (among many others).
\begin{lem}[``\`A la Gronwall'' Lemma]\label{Gronwall}
Let $f : [0, T]\rightarrow\mathbb{R}_{+}$ be a Borel, locally bounded, non-negative and non-decreasing function and let $\psi: [0, T]\rightarrow\mathbb{R}_{+}$ be a non-negative non-decreasing function satisfying 
\[\forall\, \, t\in[0, T],\,\, f(t)\leq A\int_{0}^{t}f(s)ds+B\left(\int_{0}^{t}f^{2}(s)ds\right)^{\frac{1}{2}}+\psi(t),\]
where $A, B$ are two positive real constants. Then, for any $t\in[0, T],$ \[ f(t)\leq 2e^{(2A+B^{2})t}\psi(t).\]
\end{lem}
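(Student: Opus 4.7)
The plan is to reduce the mixed linear/$L^2$ integral inequality to a standard Gronwall inequality by absorbing the square-root term into a purely linear one, then apply the classical Gronwall lemma in its non-decreasing-$\psi$ form. The key structural hypothesis we will exploit is that $f$ is non-decreasing and non-negative, which allows us to compare the $L^2$ integral against the $L^1$ integral.

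More concretely, the first step is to observe that for $s\leq t$ one has $f^{2}(s)\leq f(s)f(t)$ since $0\leq f(s)\leq f(t)$, hence
\[
\int_{0}^{t}f^{2}(s)\,ds\;\leq\; f(t)\int_{0}^{t}f(s)\,ds.
\]
Substituting into the hypothesis yields
\[
f(t)\;\leq\; A\int_{0}^{t}f(s)\,ds+B\sqrt{f(t)}\,\sqrt{\textstyle\int_{0}^{t}f(s)\,ds}+\psi(t).
\]
The second step is to apply the elementary inequality $B\sqrt{uv}\leq \tfrac{1}{2}u+\tfrac{B^{2}}{2}v$ (Young's inequality with weights $1$ and $B^{2}$) with $u=f(t)$ and $v=\int_{0}^{t}f(s)\,ds$, which gives
\[
f(t)\;\leq\; \Big(A+\tfrac{B^{2}}{2}\Big)\int_{0}^{t}f(s)\,ds+\tfrac{1}{2}f(t)+\psi(t).
\]
Since $f$ is locally bounded, $f(t)<+\infty$, so we may subtract $\tfrac{1}{2}f(t)$ from both sides and multiply by $2$:
\[
f(t)\;\leq\; (2A+B^{2})\int_{0}^{t}f(s)\,ds+2\psi(t).
\]

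The third step is a direct application of the classical Gronwall lemma: since $2\psi$ is non-negative and non-decreasing, the above integral inequality yields
\[
f(t)\;\leq\; 2\psi(t)\,e^{(2A+B^{2})t},\qquad t\in[0,T],
\]
which is the claimed bound.

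The only potential obstacle is the legitimacy of the rearrangement $\tfrac{1}{2}f(t)\leq \cdots$, which requires $f(t)<+\infty$; this is granted by the local boundedness assumption. Apart from that, the argument is entirely elementary and the only non-obvious idea is the monotonicity trick $f^{2}(s)\leq f(s)f(t)$ for $s\leq t$, which converts the $L^{2}$ datum into an $L^{1}$ datum at the price of a factor $f(t)$ that is then reabsorbed on the left-hand side via Young's inequality.
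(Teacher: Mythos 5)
Your proof is correct. The paper does not prove this lemma itself --- it cites Lemma~7.3 of \cite{pages2018numerical} --- and the argument there is essentially the one you give: bound $\int_0^t f^2(s)\,ds$ by ($f(t)$ or a running supremum of $f$) times $\int_0^t f(s)\,ds$, absorb the resulting square-root term into the left-hand side via Young's inequality, and conclude with the classical Gronwall lemma for a non-negative non-decreasing inhomogeneity, which yields exactly the constant $2e^{(2A+B^{2})t}$. Your use of the monotonicity of $f$ (which is part of the stated hypotheses) in place of a running supremum is a harmless simplification, and the subtraction of $\tfrac12 f(t)$ is legitimate since $f$ is (locally) bounded on the compact interval $[0,T]$.
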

\begin{prop}[The Generalized Minkowski Inequality]\label{gemin}
For any (bi-measurable) process $X=(X_{t})_{t\geq0}$, for every $p\in[1,\infty)$ and for every $ T\in[0, +\infty],$
\begin{equation}\label{eq:GMinl}\left\Vert \int_{0}^{T}X_{t}dt\right\Vert_{p}\leq\int_{0}^{T}\left\Vert X_{t}\right\Vert_{p}dt.
\end{equation}
\end{prop}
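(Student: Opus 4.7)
The plan is to prove the Generalized Minkowski Inequality by duality, reducing it to the usual H\"older inequality applied pointwise in $t$, together with Fubini's theorem. First I would dispose of the trivial case where the right-hand side $\int_0^T \|X_t\|_p\,dt$ equals $+\infty$, so that we may assume without loss of generality that $t\mapsto \|X_t\|_p$ is integrable on $[0,T]$; this in particular ensures, by a pre-application of Fubini-Tonelli, that $\int_0^T X_t\,dt$ is almost surely well-defined as a Bochner/Lebesgue integral and lies in $L^p(\mathbb{P})$.

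The core step uses the dual characterization of the $L^p$-norm: for $p\in[1,+\infty)$ and its conjugate exponent $q\in(1,+\infty]$ with $\tfrac{1}{p}+\tfrac{1}{q}=1$,
\[
\Big\|\int_0^T X_t\,dt\Big\|_p = \sup\Big\{\mathbb{E}\Big[Z\int_0^T X_t\,dt\Big]\;:\; Z\in L^q(\mathbb{P}),\;\|Z\|_q\le 1\Big\}.
\]
For each such test variable $Z$, the joint measurability of $(\omega,t)\mapsto Z(\omega)X_t(\omega)$ together with the integrability control $\int_0^T \mathbb{E}|ZX_t|\,dt\le \int_0^T \|Z\|_q\|X_t\|_p\,dt<+\infty$ provided by H\"older's inequality allows one to apply Fubini's theorem to swap $\mathbb{E}$ and $\int_0^T\cdot\,dt$. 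This yields
\[
\mathbb{E}\Big[Z\int_0^T X_t\,dt\Big] = \int_0^T \mathbb{E}[ZX_t]\,dt \le \int_0^T \|Z\|_q\|X_t\|_p\,dt \le \int_0^T \|X_t\|_p\,dt.
\]
Taking the supremum over $Z$ on the left-hand side gives the announced inequality.

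The main (minor) obstacle is the case $p=1$, for which the duality pairing uses $q=+\infty$: here one simply takes $Z=\mathrm{sign}\big(\int_0^T X_t\,dt\big)$, which is bounded by $1$, and the same Fubini/H\"older argument collapses to $\mathbb{E}|\int_0^T X_t\,dt|\le \int_0^T \mathbb{E}|X_t|\,dt$. The bi-measurability assumption on $X$ is precisely what is needed to legitimize the use of Fubini at every step, so no additional regularity has to be invoked. An alternative route, should one prefer to avoid duality, would be to discretize: approximate $\int_0^T X_t\,dt$ by Riemann sums $\sum_k X_{t_k}(t_{k+1}-t_k)$, apply the ordinary (finite) Minkowski inequality in $L^p(\mathbb{P})$ to obtain $\|\sum_k X_{t_k}(t_{k+1}-t_k)\|_p\le \sum_k \|X_{t_k}\|_p(t_{k+1}-t_k)$, and then pass to the limit using dominated convergence on both sides; this is conceptually simpler but slightly heavier in the verification of the convergence of the Riemann sums, which is why I would favor the duality approach.
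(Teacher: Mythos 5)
The paper does not actually prove Proposition~\ref{gemin}: it is quoted as a classical tool and the proof is delegated to the cited references (\cite[Section 7.8]{pages2018numerical}, \cite[Chapter IV - Section 4]{MR1725357}), so there is no in-paper argument to compare against. Your duality proof is the standard one and is essentially correct: reduce to the case where $\int_0^T\Vert X_t\Vert_p\,dt<+\infty$, use the dual representation of the $L^p$-norm, and swap $\mathbb{E}$ and $\int_0^T\cdot\,dt$ by Fubini, the interchange being licensed by the bi-measurability of $(\omega,t)\mapsto X_t(\omega)$ and the H\"older bound $\int_0^T\mathbb{E}|ZX_t|\,dt\le\Vert Z\Vert_q\int_0^T\Vert X_t\Vert_p\,dt$.

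Two small points deserve tightening. First, your claim that a ``pre-application of Fubini--Tonelli'' already shows $\int_0^T X_t\,dt\in L^p(\mathbb{P})$ is not justified (it only yields integrability, i.e.\ membership in $L^1$, since $\Vert X_t\Vert_1\le\Vert X_t\Vert_p$ on a probability space); asserting $L^p$-membership up front is in fact part of what the inequality proves. This is harmless but should be removed or repaired: either invoke the version of the dual characterization $\Vert f\Vert_p=\sup\{\mathbb{E}[|f|g]:g\ge0,\ \Vert g\Vert_q\le1\}$ valid for every nonnegative measurable $f$ on a $\sigma$-finite space (so no a priori finiteness of the left side is needed), or run your argument with the truncations $|\int_0^T X_t\,dt|\wedge n$ and let $n\to+\infty$. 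Second, the paper applies the proposition to $\mathbb{R}^d$-valued processes, whereas your pairing $\mathbb{E}[Z\int_0^T X_t\,dt]$ with a scalar test variable $Z$ is written for the real-valued case; this is fixed in one line by the pointwise bound $|\int_0^T X_t\,dt|\le\int_0^T|X_t|\,dt$, which reduces the vector case to the scalar nonnegative one. With these adjustments the proof is complete; your alternative Riemann-sum route would also work but requires more care than the duality argument, as you note.
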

\begin{thm}[Burk\"older-Davis-Gundy Inequality (continuous time)]\label{BDGin}
For every $p\in(0, +\infty)$, there exists two real constants $C_{p}^{BDG}>c_{p}^{BDG}>0$ such that, for every continuous local martingale $(X_{t})_{t\in[0, T]}$ null at 0 with sharp bracket process $\big(\langle X \rangle_{t}\big)_{t\in[0, T]}$,
\[c_{p}^{BDG}\left\Vert \sqrt{\langle X \rangle_{T}}\right\Vert_{p}\leq\left\Vert \sup_{t\in[0,T]}\left|X_{t}\right|\right\Vert_{p}\leq C_{p}^{BDG}\left\Vert \sqrt{\langle X\rangle_{T}}\right\Vert_{p}.\]
\end{thm}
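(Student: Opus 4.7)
The strategy is to derive both inequalities uniformly for all $p\!\in(0,+\infty)$ from Burkholder's good-$\lambda$ inequalities, rather than split into the range $p\geq 2$ (where Itô's formula applied to $|x|^p$ plus Doob's maximal inequality would suffice) and the range $p<2$ (where one would typically invoke Lenglart's domination inequality). First I would reduce to bounded martingales with bounded quadratic variation via the localizing stopping times $\tau_n=\inf\{t\geq 0:|X_t|\vee \langle X\rangle_t\geq n\}\wedge T$: once the inequalities are proved for $X^{\tau_n}$ with constants independent of $n$, Fatou's lemma together with monotone convergence on $\langle X\rangle_{T\wedge\tau_n}\uparrow\langle X\rangle_T$ transfer them to $X$ itself.

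For the upper bound, I would establish the good-$\lambda$ inequality
\[
\mathbb{P}\bigl(X_T^*>\beta\lambda,\;\sqrt{\langle X\rangle_T}\leq\delta\lambda\bigr)\leq \frac{\delta^2}{(\beta-1)^2}\,\mathbb{P}(X_T^*>\lambda), \qquad \lambda>0,\;\beta>1,\;\delta>0,
\]
where $X_T^*\coloneqq\sup_{t\leq T}|X_t|$. The proof uses three stopping times $\tau=\inf\{t:|X_t|>\lambda\}$, $\sigma=\inf\{t:|X_t|>\beta\lambda\}$ and $\rho=\inf\{t:\langle X\rangle_t>\delta^2\lambda^2\}$, and the auxiliary continuous martingale $M_t\coloneqq X_{t\wedge\sigma\wedge\rho}-X_{t\wedge\tau}$. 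Since $\langle X\rangle$ is continuous, $\langle M\rangle_T\leq \delta^2\lambda^2$ almost surely, and on the event in question one further has $|M_T|\geq (\beta-1)\lambda$ and $\tau\leq T$. The conditional Chebyshev inequality applied to $M_T$ given $\mathcal{F}_\tau$, combined with $\mathbb{E}[M_T^2\mid\mathcal{F}_\tau]=\mathbb{E}[\langle M\rangle_T\mid\mathcal{F}_\tau]\leq\delta^2\lambda^2$ and the identity $\{\tau\leq T\}=\{X_T^*\geq\lambda\}$, yields the claimed bound.

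I would then extract an $L^p$ inequality by multiplying the good-$\lambda$ estimate by $p\lambda^{p-1}$, integrating over $\lambda>0$ using the layer-cake formula $\mathbb{E}[Z^p]=\int_0^\infty p\lambda^{p-1}\mathbb{P}(Z>\lambda)d\lambda$, and splitting $\{X_T^*>\beta\lambda\}\subset\{X_T^*>\beta\lambda,\sqrt{\langle X\rangle_T}\leq\delta\lambda\}\cup\{\sqrt{\langle X\rangle_T}>\delta\lambda\}$. A change of variable $\mu=\beta\lambda$ on the left-hand side gives
\[
\beta^{-p}\,\mathbb{E}[(X_T^*)^p]\leq \frac{\delta^2}{(\beta-1)^2}\,\mathbb{E}[(X_T^*)^p]+\delta^{-p}\,\mathbb{E}[\langle X\rangle_T^{p/2}].
\]
Choosing first $\beta>1$ and then $\delta>0$ small enough so that $\delta^2/(\beta-1)^2<\beta^{-p}/2$ (with both constants depending only on $p$), the first right-hand term is absorbed into the left, which is a finite quantity thanks to the localization of Step 1; this produces $\bigl\|X_T^*\bigr\|_p\leq C_p^{BDG}\bigl\|\sqrt{\langle X\rangle_T}\bigr\|_p$.

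The reverse inequality is where I expect the main obstacle. The natural companion good-$\lambda$ estimate $\mathbb{P}(\sqrt{\langle X\rangle_T}>\beta\lambda,\,X_T^*\leq\delta\lambda)\leq c'(\beta,\delta)\mathbb{P}(\sqrt{\langle X\rangle_T}>\lambda)$ is subtler because $\langle X\rangle$ is not itself a martingale, so Chebyshev cannot be applied directly. The remedy is the identity
\[
\langle X\rangle_{T\wedge\rho'}-\langle X\rangle_\tau=(X_{T\wedge\rho'}-X_\tau)^2-2\int_\tau^{T\wedge\rho'}(X_s-X_\tau)\,dX_s,
\]
with appropriately redefined stopping times of the levels $\sqrt{\langle X\rangle}=\lambda$, $\sqrt{\langle X\rangle}=\beta\lambda$ and $|X|=\delta\lambda$; taking conditional expectations given $\mathcal{F}_\tau$, using $|X_s-X_\tau|\leq 2\delta\lambda$ on the relevant event and Doob's $L^2$ maximal inequality on the stopped stochastic integral provide the needed second-order control. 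Once this reverse good-$\lambda$ estimate is established, the integration argument of the preceding paragraph applied symmetrically yields $\bigl\|\sqrt{\langle X\rangle_T}\bigr\|_p\leq (c_p^{BDG})^{-1}\bigl\|X_T^*\bigr\|_p$, completing the proof.
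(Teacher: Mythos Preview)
The paper does not give its own proof of this theorem: it is stated as a classical tool and the authors refer to \cite[Section 7.8]{pages2018numerical} and \cite[Chapter IV, Section 4]{MR1725357} for proofs. So there is no ``paper's approach'' to compare against beyond those references.

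Your outline is the standard good-$\lambda$ proof, which is precisely the route taken in Revuz--Yor (the second reference above), so in that sense you are aligned with what the paper invokes. The structure --- localize, prove the two companion good-$\lambda$ inequalities, integrate against $p\lambda^{p-1}\,d\lambda$, absorb --- is correct, and your identification of the reverse inequality as the more delicate half, handled through the It\^o identity for $(X_t-X_\tau)^2$ rather than a direct Chebyshev bound, is exactly right.

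One small technical point worth tightening: in your definition $M_t=X_{t\wedge\sigma\wedge\rho}-X_{t\wedge\tau}$, the bound $\langle M\rangle_T\leq\delta^2\lambda^2$ \emph{almost surely} requires that $\tau\leq\sigma\wedge\rho$ so that $\langle M\rangle$ really is an increment of $\langle X\rangle$ stopped at $\rho$. While $\tau\leq\sigma$ always holds, $\tau\leq\rho$ need not. The clean fix is to replace $X_{t\wedge\tau}$ by $X_{t\wedge\tau\wedge\sigma\wedge\rho}$ (equivalently, work with the martingale $t\mapsto X^{\sigma\wedge\rho}_{t}-X^{\sigma\wedge\rho}_{t\wedge\tau}$), which does not change the value of $M_T$ on the event of interest but makes the quadratic-variation bound hold everywhere. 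With that adjustment the argument goes through as written.
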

In particular, if $(B_{t})$ is an $(\mathcal{F}_{t})$-standard Brownian motion and $(H_{t})_{t\geq0}$ is an $(\mathcal{F}_{t})$-progressively measurable process having values in $\mathbb{M}_{d\times q}(\mathbb{R})$ such that $\int_{0}^{T}\left\Vert H_{t}\right\Vert^{2}dt<+\infty$ $\;\mathbb{P}-a.s.$, then the $d$-dimensional local martingale $\int_{0}^{\cdot}H_{s}dB_{s}$ satisfies
\begin{equation}\label{BDGinequality}
\left\Vert\sup_{t\in[0, T]}\left|\int_{0}^{t}H_{s}dB_{s}\right|\right\Vert_{p}\leq C_{d,p}^{BDG}\left\Vert\sqrt{\int_{0}^{T}\left\Vert H_{t}\right\Vert^{2}dt}\right\Vert_{p}.
\end{equation}
where $C_{d,p}^{BDG}$ only depends on $p,d$. 


\begin{proof}[Proof of Proposition~\ref{cvgeuler}-$(a)$]
(a) If $X$ is the unique strong solution of $~(\ref{defconvx})$, then its probability distribution $\mu$ is the unique weak solution. We define two new coefficient functions depending on $\iota(\mu)=(\mu_{t})_{t\in[0, T]}$ by 
\[\tilde{b}(t, x)\coloneqq b(t, x, \mu_{t}) \quad\text{ and }\quad \tilde{\sigma}(t, x)\coloneqq \sigma(t, x, \mu_{t}).\]
Now we discuss the continuity in $t$ of $\tilde{b}$ and $\tilde{\sigma}$. 
In fact,  
\begin{align}
\left|\tilde{b}(t, x)-\tilde{b}(s, x)\right|&\leq \left|b(t, x, \mu_{t})-b(s, x, \mu_{s})\right|\nonumber\\
&\leq  \left|b(t, x, \mu_{t})-b(s, x, \mu_{t})\right|+ \left|b(s, x, \mu_{t})-b(s, x, \mu_{s})\right|\nonumber\\
&\leq  \left|b(t, x, \mu_{t})-b(s, x, \mu_{t})\right|+\mathcal{W}_{p}(\mu_{t}, \mu_{s}),
\end{align}
and we have a similar inequality for $\tilde{\sigma}$. 
Moreover, we know from Assumption~\ref{AssumptionI} that $b$ and $\sigma$ are continuous in $t$ and from Lemma~\ref{injectionmeasure} that $\iota(\mu)=(\mu_{t})_{t\in[0, T]}\in\mathcal{C}\big([0, T], \PPRD\big)$. Hence, $\tilde{b}$ and $\tilde{\sigma}$ are continuous in $t$. Moreover, it is obvious that $\tilde{b}$ and $\tilde{\sigma}$ are still Lipschitz continuous in $x$. 
Consequently, $X$ is also the unique strong solution of the following stochastic differential equation
\[
dX_{t}=\tilde{b}(t, X_{t})dt + \tilde{\sigma}(t, X_{t})dB_{t}, \quad X_{0} \text{ same as in ~(\ref{defconvx}).}
\]
Hence, the inequality 
\[\left\Vert \sup_{u\in[0, t]}\left| X_{u}\right|\right\Vert_{p}\leq C_{p,d,b,\sigma}e^{C_{p,d,b,\sigma}t}(1+\left\Vert X_{0}\right\Vert_{p})\]
can be obtained by the usual method for the regular stochastic differential equation for which we refer to~\cite[Proposition 7.2 and (7.12)]{pages2018numerical} among many other references. 

\smallskip
Next, we prove the inequality for $\vertii{\sup_{u\in[0,t]}\left|\bar{X}_{u}^{M}\right|}_{p}$. 

We go back to the discrete Euler scheme \[
\bar{X}_{t_{m+1}}^{M}=\bar{X}_{t_{m}}^{M}+h\cdot b(t_{m}, \bar{X}_{t_{m}}^{M},\bar{\mu}_{t_{m}}^{M})+\sqrt{h\,} \sigma(t_{m}, \bar{X}_{t_{m}}^{M}, \bar{\mu}_{t_{m}}^{M})Z_{m+1}.\]
We write $\bar{X}_{t_{m}}$ instead of $\bar{X}_{t_{m}}^{M}$ in the following. 
By Minkowski's inequality, we have 
\[\vertii{\bar{X}_{t_{m+1}}}_{p}=\vertii{\bar{X}_{t_{m}}}_{p}+h\vertii{ b(t_{m}, \bar{X}_{t_{m}},\bar{\mu}_{t_{m}})}_{p}+\sqrt{h\,} \Big\Vert\vertiii{\sigma(t_{m}, \bar{X}_{t_{m}}, \bar{\mu}_{t_{m}})}\left|Z_{m+1}\right|\Big\Vert_{p}.\]
As $Z_{m+1}$ is independent of the $\sigma-$algebra generated by $\bar{X}_{t_{0}}, \ldots, \bar{X}_{t_{m}}$, one can apply the linear growth result in  (\ref{lineargrowth}) and obtain 
\[\vertii{\bar{X}_{t_{m+1}}}_{p}=\vertii{\bar{X}_{t_{m}}}_{p}+C_{b,\sigma, L, T}(h+c_{p} h^{1/2})\big(1+\vertii{\bar{X}_{t_{m}}}_{p}+\mathcal{W}_{p}(\delta_{0}, \bar{\mu}_{t_{m}})\big),\]
where $C_{b,\sigma, L, T}$ and $c_{p}$ are two real constants. 
As $\mathcal{W}_{p}(\delta_{0}, \bar{\mu}_{t_{m}})\leq \vertii{\bar{X}_{t_{m}}}_{p}$, {\color{black}there exists  constants $C_1$ and $C_2$ such that 
\[\vertii{\bar{X}_{t_{m+1}}}_{p}\leq C_1 \vertii{\bar{X}_{t_{m}}}_{p}+C_2,\]}
which in turn implies by induction that $\displaystyle\max_{m=0, \ldots, M}\vertii{\bar{X}_{t_{m}}}_{p}<+\infty$ since \[\vertii{\bar{X}_{0}}_{p}=\vertii{X_{0}}_{p}<+\infty.\] 

For every $t\in[t_{m}, t_{m+1})$, it follows from the definition~(\ref{defeulercontinuousx}) that 
\[
\vertii{\bar{X}^{M}_{t}}_{p}\leq\vertii{\bar{X}_{t_{m}}}_{p}+(t-t_{m})\vertii{b(t_{m}, \bar{X}_{t_{m}}, \bar{\mu}_{t_{m}})}_{p}+\Big\Vert\vertiii{\sigma(t_{m},\bar{X}_{t_{m}}, \bar{\mu}_{t_{m}})}\left|B_{t}-B_{t_{m}}\right|\Big\Vert_{p}.
\]
We write $\bar{X}_{t}$ instead of $\bar{X}^{M}_{t}$ in the following when there is no ambiguity. 

As $B_{t}-B_{t_{m}}$ is independent to $\sigma(\mathcal{F}_{s}, s\leq t_{m})$, it follows that 
\begin{align}
\vertii{\bar{X}_{t}}_{p}&\leq\vertii{\bar{X}_{t_{m}}}_{p}+C_{b,\sigma, L, T}\big(1+\vertii{\bar{X}_{t_{m}}}_{p}+ \mathcal{W}_{p}(\delta_{0}, \bar{\mu}_{t_{m}})\big)\big(h+c_{d, p}(t-t_{m})^{1/2}\big)\nonumber\\
& \leq C_{1} \vertii{\bar{X}_{t_{m}}}_{p}+C_{2}, \nonumber
\end{align}
where $C_{1}$ and $C_{2}$ are two constants. Finally, {\color{black} for a fixed $M\geq1$,}
\begin{equation}\label{whatlost}
\sup_{t\in[0, T]}\vertii{\bar{X}_{t}^{M}}_{p}<+\infty.
\end{equation}

Consequently, 
\begin{align}
&\left\Vert\sup_{u\in[0, t]}\left| \bar{X}_{u}^{M}\right|\right\Vert_{p} \nonumber\\
&\leq\left\Vert X_{0}\right\Vert_{p}+\left\Vert \int_{0}^{t}\left|b(s, \bar{X}_{\underline{s}}, \bar{\mu}_{\underline{s}})\right|ds\right\Vert_{p}+\left\Vert \sup_{u\in[0, t]}\left|\int_{0}^{u}\sigma(s, \bar{X}_{\underline{s}}, \bar{\mu}_{\underline{s}})dB_{s}\right|\right\Vert_{p}\nonumber\\
&\quad\text{(by Minkowski's Inequality)}\nonumber\\
&\leq \left\Vert X_{0}\right\Vert_{p}+\int_{0}^{t}\left\Vert b(s, \bar{X}_{\underline{s}}, \bar{\mu}_{\underline{s}})\right\Vert_{p}ds+C_{d,p}^{BDG}\left\Vert \sqrt{\int_{0}^{t}\vertiii{\sigma(s, \bar{X}_{\underline{s}}, \bar{\mu}_{\underline{s}})}^{2}ds}\right\Vert_{p}\nonumber\\
&\quad\text{(by Lemma~\ref{gemin} and~\eqref{BDGinequality})}\nonumber\\
&\leq  \left\Vert X_{0}\right\Vert_{p} + \int_{0}^{t} C_{b,\sigma, L, T}\left\Vert 1+\left|\bar{X}_{\underline{s}}\right|+\mathcal{W}_{p}(\bar{\mu}_{\underline{s}}, \delta_{0})\right\Vert_{s}ds \nonumber\\
&\hspace{3cm}+ C_{d,p, L}^{BDG}\left\Vert \sqrt{\int_{0}^{t}\big| 1+\left| \bar{X}_{\underline{s}}\right|+\mathcal{W}_{p}(\bar{\mu}_{\underline{s}},\delta_{0})\big|^{2}ds}\right\Vert_{p}\;(\text{by }(\ref{lineargrowth}))\nonumber\\
&\leq \left\Vert X_{0}\right\Vert_{p} +\int_{0}^{t}C_{b,\sigma, L, T}(1+2\left\Vert \bar{X}_{\underline{s}}\right\Vert_{p})ds\nonumber\\
&\hspace{3cm}+C_{d,p, L}^{BDG}\left\Vert \sqrt{\int_{0}^{t}4\big(1+\left| \bar{X}_{\underline{s}}\right|^{2}+\mathcal{W}_{p}^{2}(\bar{\mu}_{\underline{s}},\delta_{0})\big)ds}\right\Vert_{p}\nonumber\\
&\leq \left\Vert X_{0}\right\Vert_{p} +\int_{0}^{t}C_{b,\sigma, L, T}(1+2\left\Vert \bar{X}_{\underline{s}}\right\Vert_{p})ds\nonumber\\
&\hspace{3cm}+C_{d,p, L}^{BDG}\left\Vert \sqrt{4\big[t+\int_{0}^{t}\left|\bar{X}_{\underline{s}}\right|^{2}ds+\int_{0}^{t}\mathcal{W}_{p}^{2}(\bar{\mu}_{\underline{s}}, \delta_{0})ds\big]}\right\Vert_{p}\nonumber\\
&\leq \left\Vert X_{0}\right\Vert_{p} +\int_{0}^{t}C_{b,\sigma, L, T}(1+2\left\Vert \bar{X}_{\underline{s}}\right\Vert_{p})ds\nonumber\\
&\hspace{3cm}+C_{d,p, L}^{BDG'}\left\Vert \sqrt{t}+\sqrt{\int_{0}^{t}\left|\bar{X}_{\underline{s}}\right|^{2}ds}+\sqrt{\int_{0}^{t}\mathcal{W}_{p}^{2}(\bar{\mu}_{\underline{s}}, \delta_{0})ds}\right\Vert_{p}\nonumber\\
&\leq \left\Vert X_{0}\right\Vert_{p} +\int_{0}^{t}C_{b,\sigma, L, T}(1+2\left\Vert \bar{X}_{\underline{s}}\right\Vert_{p})ds\nonumber\\
&\hspace{3cm}+C_{d,p, L}^{BDG'}\left[ \sqrt{t}+\left\Vert\sqrt{\int_{0}^{t}\left|\bar{X}_{\underline{s}}\right|^{2}ds}\right\Vert_{p}+\sqrt{\int_{0}^{t}\mathcal{W}_{p}^{2}(\bar{\mu}_{\underline{s}}, \delta_{0})ds} \;\right]\nonumber\\
&\leq \left\Vert X_{0}\right\Vert_{p} +\int_{0}^{t}C_{b,\sigma, L, T}(1+2\left\Vert \bar{X}_{\underline{s}}\right\Vert_{p})ds\nonumber\\
&\hspace{3cm}+C_{d,p, L}^{BDG'}\left[ \sqrt{t}+\left\Vert\int_{0}^{t}\left|\bar{X}_{\underline{s}}\right|^{2}ds\right\Vert_{\frac{p}{2}}^{\frac{1}{2}}+\left(\int_{0}^{t}\mathcal{W}_{p}^{2}(\bar{\mu}_{\underline{s}}, \delta_{0})ds\right)^{\frac{1}{2}}\;\right]\nonumber\\
&\leq \left\Vert X_{0}\right\Vert_{p} +\int_{0}^{t}C_{b,\sigma, L, T}(1+2\left\Vert \bar{X}_{\underline{s}}\right\Vert_{p})ds\nonumber\\
&\hspace{3cm}+C_{d,p, L}^{BDG'}\left[ \sqrt{t}+\Big[\int_{0}^{t}\left\Vert \left|\bar{X}_{\underline{s}}\right|^{2}\right\Vert_{\frac{p}{2}}ds\Big]^{\frac{1}{2}}+\Big[\int_{0}^{t}\mathcal{W}_{p}^{2}(\bar{\mu}_{\underline{s}}, \delta_{0})ds\Big]^{\frac{1}{2}}\right]\nonumber\\
& \quad \quad(\text{by Lemma~\ref{gemin} since } \frac{p}{2}\geq1).\nonumber
\end{align}
It follows from  $\left\Vert\left|\bar{X}_{\underline{s}}\right|^{2}\right\Vert_{\frac{p}{2}}=\big[\mathbb{E}\left|\bar{X}_{\underline{s}}\right|^{2\cdot\frac{p}{2}}\big]^{\frac{2}{p}}=\left\Vert \bar{X}_{\underline{s}}\right\Vert_{p}^{2}$ and 
\[
\left[\int_{0}^{t}\mathcal{W}_{p}^{2}(\bar{\mu}_{\underline{s}}, \delta_{0})ds\right]^{\frac{1}{2}}\leq  \left[\int_{0}^{t}\left\Vert\mathcal{W}_{p}(\bar{\mu}_{\underline{s}}, \delta_{0})\right\Vert_{p}^{2}ds\right]^{\frac{1}{2}}\leq \left[\int_{0}^{t}\left\Vert \bar{X}_{\underline{s}}\right\Vert_{p}^{2}ds\right]^{\frac{1}{2}}
\]
that
\begin{align}
\left\Vert\sup_{u\in[0, t]}\left| \bar{X}_{u}^{M}\right|\right\Vert_{p}\leq &\left\Vert X_{0}\right\Vert_{p}+\int_{0}^{t}C_{b,\sigma, L, T}\big(1+2\left\Vert \bar{X}_{\underline{s}}\right\Vert_{p}\big)ds\nonumber\\
&\label{whatlost2}\hspace{3cm}+C_{d,p, L}^{BDG'}\left(\sqrt{t}+\left[\int_{0}^{t}\left\Vert \bar{X}_{\underline{s}}\right\Vert_{p}^{2}ds\right]^{\frac{1}{2}}\right).
\end{align}
Hence,~(\ref{whatlost2}) implies that, for every $M\geq1$, one has
$\left\Vert\sup_{u\in[0, T]}\left| \bar{X}_{u}^{M}\right|\right\Vert_{p}<+\infty$ 
by applying (\ref{whatlost}).

In order to establish the uniformity in $M$, we come back to~(\ref{whatlost2}). As $\vertii{\bar{X}_{\underline{s}}}_{p}\leq \vertii{\sup_{u\in[0,s]}\left|\bar{X}_{u}\right|}_{p}$, it follows that
\begin{align}
\left\Vert\sup_{u\in[0, t]}\left| \bar{X}_{u}^{M}\right|\right\Vert_{p}\leq&\left\Vert X_{0}\right\Vert_{p}+C_{b,\sigma, L, T}\big(t+C_{d,p, L}^{BDG'}\sqrt{t}\big).\nonumber\\
&+C_{b,\sigma, L, T}\left\{\int_{0}^{t}\Big\Vert\sup_{u\in[0,s]}\left|\bar{X}_{u}\right|\Big\Vert_{p}ds+C_{d,p, L}^{BDG'}\Big[\int_{0}^{t}\Big\Vert\sup_{u\in[0,s]}\left|\bar{X}_{u}\right|\Big\Vert_{p}^{2}ds\Big]^{\frac{1}{2}}\right\}.\nonumber
\end{align}
Hence, 
\[\left\Vert\sup_{u\in[0, t]}\left| \bar{X}_{u}^{M}\right|\right\Vert_{p}\leq 2e^{(2\,C_{b,\sigma, L, T}+C_{d,p, L}^{BDG'^{2}})t}(\left\Vert X_{0}\right\Vert_{p}+C_{b,\sigma, L, T}\big(t+C_{d,p, L}^{BDG}\sqrt{t})\big),\]
by applying  Lemma~\ref{Gronwall}. Thus one concludes the proof by taking \[C_{p,d,b,\sigma}=\big(2\,C_{b,\sigma, L, T}+C_{d,p, L}^{BDG'^{2}}\big)\vee 2\,C_{b,\sigma, L, T}\big(T+C_{d, p, L}^{BDG}\sqrt{T}\big)\vee 2.\] 

Next, it follows from $\left|X_{t}-X_{s}\right|=\left|\int_{s}^{t}b(u, X_{u}, \mu_{u})du+\int_{s}^{t}\sigma(u, X_{u}, \mu_{u})dB_{u}\right|$ that, 
\begin{align}
&\left\Vert X_{t}-X_{s}\right\Vert_{p}\leq\left\Vert\int_{s}^{t}b(u, X_{u}, \mu_{u})du\right\Vert_{p}+\left\Vert\int_{s}^{t}\sigma(u, X_{u}, \mu_{u})dB_{u}\right\Vert_{p}\nonumber\\
&\quad\leq\int_{s}^{t}\left\Vert b(u, X_{u}, \mu_{u})\right\Vert_{p}du+ C_{d,p}^{BDG}\vertii{\int_{s}^{t}\vertiii{\sigma(u, X_{u}, \mu_{u})}^{2}du}_{\frac{p}{2}}^{\frac{1}{2}}\nonumber\\
&\hspace{1cm}(\text{by Lemma~\ref{gemin} and Lemma~\ref{BDGin}})\nonumber\\
&\quad\leq\int_{s}^{t}C_{b, \sigma, L, T}\Big{[}1+\left\Vert X_{u}\right\Vert_{p}+\vertii{\mathcal{W}_{p}(\mu_{p}, \delta_{0})}_{p}\Big{]}du\nonumber\\
&\quad\hspace{1cm}+ C_{d,p}^{BDG}\vertii{\int_{s}^{t}C_{b, \sigma, L, T}\Big{[}1+\left\Vert X_{u}\right\Vert_{p}+\vertii{\mathcal{W}_{p}(\mu_{p}, \delta_{0})}_{p}\Big{]}^{2}du}_{\frac{p}{2}}^{\frac{1}{2}}\;\;(\text{by (\ref{lineargrowth})})\nonumber\\
&\quad\leq\int_{s}^{t}C_{b, \sigma, L, T}\Big{[}1+2\left\Vert X_{u}\right\Vert_{p}\Big{]}du+4\,C_{d,p}^{BDG}\cdot C_{b, \sigma, L, T}\vertii{\int_{s}^{t}\left[1+\left\Vert X_{u}\right\Vert_{p}^{2}+\mathcal{W}_{p}^{2}(\mu_{p}, \delta_{0})\right]du}_{\frac{p}{2}}^{\frac{1}{2}}\nonumber\\
&\quad\leq\int_{s}^{t}C_{b, \sigma, L, T}\Big{[}1+2\left\Vert X_{u}\right\Vert_{p}\Big{]}du\nonumber\\
&\quad\hspace{1cm}+4\,C_{d,p}^{BDG}\cdot C_{b, \sigma, L, T}\left[(t-s)+ \vertii{\int_{s}^{t}\left| X_{u}\right|^{2}du}_{\frac{p}{2}}+\vertii{\int_{s}^{t}\mathcal{W}_{p}^{2}(\mu_{u}, \delta_{0})du}_{\frac{p}{2}}\right]^{\frac{1}{2}}\nonumber\\
&\quad\leq\int_{s}^{t}C_{b, \sigma, L, T}\Big{[}1+2\left\Vert X_{u}\right\Vert_{p}\Big{]}du\nonumber\\
&\quad\hspace{1cm}+4\,C_{d,p}^{BDG}\cdot C_{b, \sigma, L, T}\left[\sqrt{t-s}+\left[\int_{s}^{t}\left\Vert \left|X_{u}\right|^{2}\right\Vert_{\frac{p}{2}}du\right]^{\frac{1}{2}}+\left[\int_{s}^{t}\left\Vert \mathcal{W}_{p}^{2}(\mu_{u}, \delta_{0})\right\Vert_{\frac{p}{2}}du\right]^{\frac{1}{2}}\right]\nonumber\\
&\quad\leq\int_{s}^{t}C_{b, \sigma, L, T}\left[1+2\,\Big\Vert \sup_{u\in[0,T]}\left|X_{u}\right|\Big\Vert_{p}\right]du\nonumber\\
&\quad\hspace{1cm}+4\,C_{d,p}^{BDG}\cdot C_{b, \sigma, L, T}\left\{ \sqrt{t-s} +\sqrt{\int_{s}^{t}\left\Vert X_{u}\right\Vert_{p}^{2}du\,}+\sqrt{\int_{s}^{t}\vertii{\mathcal{W}_{p}(\mu_{u}, \delta_{0})}_{p}^{2}du}\,\right\}\nonumber\\
&\quad\leq\,C_{b, \sigma, L, T}\left[1+2\,\Big\Vert \sup_{u\in[0,T]}\left|X_{u}\right|\Big\Vert_{p}\right](t-s)\nonumber\\
&\quad\hspace{1cm}+4\,C_{d,p}^{BDG}\cdot C_{b, \sigma, L, T}\left\{ \sqrt{t-s} +2\sqrt{t-s}\,\,{\color{black}\Big\Vert \sup_{u\in[0,T]}\left|X_{u}\right|\Big\Vert_{p}}\,\right\}\nonumber\\
&\quad\leq\Big\{C_{b, \sigma, L, T}\Big[1+2\,\big\Vert \sup_{u\in[0,T]}\left|X_{u}\right|\big\Vert_{p}\Big]\sqrt{T}\nonumber\\
&\quad\hspace{1cm}+4\,C_{d,p}^{BDG}\cdot C_{b, \sigma, L, T}\Big[1+2\,{\color{black}\big\Vert \sup_{u\in[0,T]}\left|X_{u}\right|\big\Vert_{p}}\,\Big]\Big\}\sqrt{t-s}.\nonumber
\end{align}
Owing to the result in $(a)$, $\big\Vert \sup_{u\in[0,T]}\left|X_{u}\right|\big\Vert_{p}\leq C_{p,d,b,\sigma}e^{C_{p,d,b,\sigma}t}\big(1+\left\Vert X_{0}\right\Vert_{p}\big)$, then one can conclude by setting
\begin{align}
\kappa=C_{L, b, \sigma, \left\Vert X_{0}\right\Vert, p, d, T}\coloneqq&\:C_{b, \sigma, L, T}\Big{[}1+2\,C_{p,d,b,\sigma}e^{C_{p,d,b,\sigma}t}(1+\left\Vert X_{0}\right\Vert_{p})\Big{]}\sqrt{T}\nonumber\\
&+ 4\,C_{d,p}^{BDG}\cdot C_{b, \sigma, L, T}\Big[1+{\color{black}2\,C_{p,d,b,\sigma}e^{C_{p,d,b,\sigma}t}\big(1+\left\Vert X_{0}\right\Vert_{p}\big)}\,\Big]\nonumber.\quad\quad\quad\hfill\qedhere
\end{align}
\renewcommand{\qedsymbol}{}
\end{proof}

\begin{proof}[Proof of Proposition~\ref{cvgeuler}-$(b)$]
We write $\bar{X}_{t}$ and $\bar{\mu}_{t}$ instead of $\bar{X}_{t}^{M}$ and $\bar{\mu}_{t}^{M}$ to simplify the notation in this proof. 
For every $s\in[0, T]$, set \[\varepsilon_{s}\coloneqq X_{s}-\bar{X}_{s}=\int_{0}^{s}\big[ b(u, X_{u},\mu_{u})-b(\underline{u}, \bar{X}_{\underline{u}}, \bar{\mu}_{\underline{u}})\big]du+\int_{0}^{s}\big[ \sigma(u, X_{u}, \mu_{u})-\sigma(\underline{u}, \bar{X}_{\underline{u}}, \bar{\mu}_{\underline{u}})\big]dB_{u},\]
and let \[f(t)\coloneqq \left\Vert\sup_{s\in[0,t]}\left|\varepsilon_{s}\right|\right\Vert_{p}=\left\Vert\sup_{s\in[0,t]}\left|X_{s}-\bar{X}_{s}\right|\right\Vert_{p}.\]

It follows from Proposition~\ref{cvgeuler}-$(a)$ that $\bar{X}=(\bar{X}_{t})_{t\in[0, T]}\in L_{\mathcal{C}([0, T], \mathbb{R}^{d})}^{p}(\Omega, \mathcal{F}, \mathbb{P})$. Consequently, $\bar{\mu}\in\PPC$ and $\iota(\mu)=(\mu_{t})_{t\in[0, T]}\in\CPP$ by applying Lemma~\ref{injectionmeasure}.
Hence, 
\begin{align}&f(t)=\left\Vert\sup_{s\in[0,t]}\left|X_{s}-\bar{X}_{s}\right|\right\Vert_{p}\nonumber\\
& \leq \vertii{\int_{0}^{t}\left|b(s, X_{s}, \mu_{s})-b(\underline{s}, \bar{X}_{\underline{s}}, \bar{\mu}_{\underline{s}})\right|ds+\sup_{s\in[0,t]}\left|\int_{0}^{s}\big(\sigma(u, X_{u}, \mu_{u})-\sigma(\underline{u}, \bar{X}_{\underline{u}}, \bar{\mu}_{\underline{u}})\big)dB_{u}\right|}_{p}\nonumber\\
&\leq\int_{0}^{t}\vertii{b(s, X_{s}, \mu_{s})-b(\underline{s},\bar{X}_{\underline{s}}, \bar{\mu}_{\underline{s}})}_{p}ds+C_{d,p}^{BDG}\vertii{\sqrt{\int_{0}^{t}\vertiii{\sigma(s, X_{s}, \mu_{s})-\sigma(\underline{s}, \bar{X}_{\underline{s}}, \bar{\mu}_{\underline{s}})}^{2}ds}}_{p}\nonumber\\
&=\int_{0}^{t}\vertii{b(s, X_{s}, \mu_{s})-b(\underline{s}, \bar{X}_{\underline{s}}, \bar{\mu}_{\underline{s}})}_{p}ds+C_{d,p}^{BDG}\vertii{\int_{0}^{t}\vertiii{\sigma(s, X_{s}, \mu_{s})-\sigma(\underline{s}, \bar{X}_{\underline{s}}, \bar{\mu}_{\underline{s}})}^{2}ds}_{\frac{p}{2}}^{\frac{1}{2}}\nonumber\\
&\leq \int_{0}^{t}\vertii{b(s, X_{s}, \mu_{s})-b(\underline{s}, \bar{X}_{\underline{s}}, \bar{\mu}_{\underline{s}})}_{p}ds+C_{d,p}^{BDG}\left[\int_{0}^{t}\vertii{\vertiii{\sigma(s, X_{s}, \mu_{s})-\sigma(\underline{s}, \bar{X}_{\underline{s}}, \bar{\mu}_{\underline{s}})}^{2}}_{\frac{p}{2}}ds\right]^{\frac{1}{2}}\nonumber\\
&= \int_{0}^{t}\vertii{b(s, X_{s}, \mu_{s})-b(\underline{s}, \bar{X}_{\underline{s}}, \bar{\mu}_{\underline{s}})}_{p}ds+C_{d,p}^{BDG}\left[\int_{0}^{t}\big{\Vert}\vertiii{\sigma(s, X_{s}, \mu_{s})-\sigma(\underline{s}, \bar{X}_{\underline{s}}, \bar{\mu}_{\underline{s}})}\big{\Vert}_{p}^{2}ds\right]^{\frac{1}{2}}\nonumber\\
&\leq \int_{0}^{t}\vertii{b(s, X_{s}, \mu_{s})-b(\underline{s}, X_{s}, \mu_{s})}_{p}ds+\int_{0}^{t}\vertii{b(\underline{s}, X_{s}, \mu_{s})-b(\underline{s}, \bar{X}_{\underline{s}}, \bar{\mu}_{\underline{s}})}_{p}ds\nonumber\\
&\label{big1}
+C_{d,p}^{BDG}\left[\int_{0}^{t}\big{\Vert}\vertiii{\sigma(s, X_{s}, \mu_{s})-\sigma(\underline{s}, X_{s}, \mu_{s})}+\vertiii{\sigma(\underline{s}, X_{s}, \mu_{s})-\sigma(\underline{s}, \bar{X}_{\underline{s}}, \bar{\mu}_{\underline{s}})}\big{\Vert}_{p}^{2}ds\right]^{\frac{1}{2}}, 
\end{align}
where the last term of~(\ref{big1}) can be upper-bounded by 
\begin{align}\label{big2}
&C_{d,p}^{BDG}\left[\int_{0}^{t}\big{\Vert}\vertiii{\sigma(s, X_{s}, \mu_{s})-\sigma(\underline{s}, X_{s}, \mu_{s})}+\vertiii{\sigma(\underline{s}, X_{s}, \mu_{s})-\sigma(\underline{s}, \bar{X}_{\underline{s}}, \bar{\mu}_{\underline{s}})}\big{\Vert}_{p}^{2}ds\right]^{\frac{1}{2}}\nonumber\\
&\quad\leq C_{d,p}^{BDG}\left[\int_{0}^{t}\big[\big{\Vert}\vertiii{\sigma(s, X_{s}, \mu_{s})-\sigma(\underline{s}, X_{s}, \mu_{s})}\big{\Vert}_{p}+\big{\Vert}\vertiii{\sigma(\underline{s}, X_{s}, \mu_{s})-\sigma(\underline{s}, \bar{X}_{\underline{s}}, \bar{\mu}_{\underline{s}})}\big{\Vert}_{p}\big]^{2}ds\right]^{\frac{1}{2}}\nonumber\\
&\quad\leq \sqrt{2}C_{d,p}^{BDG}\left[\int_{0}^{t}\big{\Vert}\vertiii{\sigma(s, X_{s}, \mu_{s})-\sigma(\underline{s}, X_{s}, \mu_{s})}\big{\Vert}_{p}^{2}ds\right]^{\frac{1}{2}}\nonumber\\
&\hspace{1cm}+\sqrt{2}C_{d,p}^{BDG}\left[\int_{0}^{t}\big{\Vert}\vertiii{\sigma(\underline{s}, X_{s}, \mu_{s})-\sigma(\underline{s}, \bar{X}_{\underline{s}}, \bar{\mu}_{\underline{s}})}\big{\Vert}_{p}^{2}ds\right]^{\frac{1}{2}}. 
\end{align}
It follows that
\begin{align}\label{bigpart1}
\int_{0}^{t}&\vertii{b(s, X_{s}, \mu_{s})-b(\underline{s}, X_{s}, \mu_{s})}_{p}ds+\sqrt{2}C_{d,p}^{BDG}\left[\int_{0}^{t}\big{\Vert}\vertiii{\sigma(s, X_{s}, \mu_{s})-\sigma(\underline{s}, X_{s}, \mu_{s})}\big{\Vert}_{p}^{2}ds\right]^{\frac{1}{2}}\nonumber\\
&\leq \int_{0}^{t}\vertii{(s-\underline{s})^{\rho}\tilde{L}\big(1+\left|X_{s}\right|+\mathcal{W}_{p}(\mu_{s}, \delta_{0})\big)}_{p}ds\nonumber\\
&\hspace{1cm}+\sqrt{2}C_{d,p}^{BDG}\left[\int_{0}^{t}\big{\Vert}(s-\underline{s})^{\rho}\tilde{L}\big(1+\left|X_{s}\right|+\mathcal{W}_{p}(\mu_{s}, \delta_{0})\big)\big{\Vert}_{p}^{2}ds\right]^{\frac{1}{2}}\;\;\;\text{(by Assumption~\ref{AssumptionI})}\nonumber\\
&\leq  h^{\rho}T\tilde{L}(1+2\,\big{\Vert}\sup_{s\in[0, T]}\left|X_{s}\right|\big{\Vert}_{p})+\sqrt{2} h^{\rho}\tilde{L}C_{d,p}^{BDG}\left[T(2+4\,\big{\Vert}\sup_{s\in[0, T]}\left|X_{s}\right|\big{\Vert}_{p}^{2})\right]^{\frac{1}{2}}\nonumber\\
&\leq  h^{\rho}T\tilde{L}(1+2\,\big{\Vert}\sup_{s\in[0, T]}\left|X_{s}\right|\big{\Vert}_{p})+\sqrt{2} h^{\rho}\tilde{L}C_{d,p}^{BDG}\left[\sqrt{2T}+2\sqrt{T}\big{\Vert}\sup_{s\in[0, T]}\left|X_{s}\right|\big{\Vert}_{p}\right]
\end{align}
and
\begin{align}
\int_{0}^{t}\big\Vert b(\underline{s}&, X_{s}, \mu_{s})-b(\underline{s}, \bar{X}_{\underline{s}}, \bar{\mu}_{\underline{s}})\big\Vert_{p}ds+\sqrt{2}C_{d,p}^{BDG}\left[\int_{0}^{t}\big{\Vert}\vertiii{\sigma(\underline{s}, X_{s}, \mu_{s})-\sigma(\underline{s}, \bar{X}_{\underline{s}}, \bar{\mu}_{\underline{s}})}\big{\Vert}_{p}^{2}ds\right]^{\frac{1}{2}}\nonumber\\
&\leq \int_{0}^{t}\vertii{ L\big(\left|X_{s}-\bar{X}_{\underline{s}}\,\right|+\mathcal{W}_{p}(\mu_{s},\bar{\mu}_{\underline{s}})\big)}_{p}ds\nonumber\\
&\hspace{1.5cm}+\sqrt{2}C_{d,p}^{BDG}\left[\int_{0}^{t}\vertii{L\big(\left|X_{s}-\bar{X}_{\underline{s}}\,\right|+\mathcal{W}_{p}(\mu_{s},\bar{\mu}_{\underline{s}})\big)}_{p}^{2}ds\right]^{\frac{1}{2}}\nonumber\\
&\leq\int_{0}^{t}2L\vertii{X_{s}-\bar{X}_{\underline{s}}}_{p}ds+\sqrt{2}C_{d,p}^{BDG}\left[\int_{0}^{t}4L^{2}\left\Vert X_{s}-\bar{X}_{\underline{s}}\,\right\Vert_{p}^{2} ds\right]^{\frac{1}{2}}\nonumber\\
&\leq\int_{0}^{t}2L\left[
\vertii{X_{s}-X_{\underline{s}}}_{p}
+\vertii{X_{\underline{s}}-\bar{X}_{\underline{s}}}_{p}\right]ds\nonumber\\
&\hspace{1.5cm}+\sqrt{2}C_{d,p}^{BDG}\left[\int_{0}^{t}4L^{2}\left[
\vertii{X_{s}-X_{\underline{s}}}_{p}+\vertii{X_{\underline{s}}-\bar{X}_{\underline{s}}}_{p}\right]^{2} ds\right]^{\frac{1}{2}}\nonumber\\
&\leq\int_{0}^{t}2L\left[\kappa\sqrt{ h}+\vertii{X_{\underline{s}}-\bar{X}_{\underline{s}}}_{p}\right]ds+\sqrt{2}C_{d,p}^{BDG}\left[\int_{0}^{t}4L^{2}\left[\kappa\sqrt{ h}+\vertii{X_{\underline{s}}-\bar{X}_{\underline{s}}}_{p}\right]^{2} ds\right]^{\frac{1}{2}}\nonumber\\
&\quad\text{(by applying Proposition~\ref{cvgeuler}-$(a)$)}\nonumber\\
&\label{bigpart2}~\leq \!2Lt\kappa\sqrt{ h}\!+\!4\,C_{d,p}^{BDG}L\sqrt{t}\kappa\sqrt{ h}\!+\!2L\int_{0}^{t}f(s)ds\!+\!\!\sqrt{2}C_{d,p}^{BDG}4L\!\!\left[\int_{0}^{t}f(s)^{2}ds\right]^{\frac{1}{2}}\!\!.
\end{align}
Let $\tilde{\kappa}(T, \vertii{X_{0}}_{p})=C_{p,d,b,\sigma}e^{C_{p,d,b,\sigma}t}(1+\left\Vert X_{0}\right\Vert_{p})$, which is the right hand side of results in Proposition~\ref{cvgeuler}-$(a)$. A combination of~(\ref{big1}),~(\ref{big2}),~(\ref{bigpart1}) and~(\ref{bigpart2}) leads to 
\begin{align}
f(t)&=\left\Vert\sup_{s\in[0,t]}\left|X_{s}-\bar{X}_{s}\right|\right\Vert_{p}\nonumber\\
&\leq  h^{\rho}T\tilde{L}(1+2\,\big{\Vert}\sup_{s\in[0, T]}\left|X_{s}\right|\big{\Vert}_{p})+\sqrt{2} h^{\rho}\tilde{L}C_{d,p}^{BDG}\left[\sqrt{2T}+2\sqrt{T}\big{\Vert}\sup_{s\in[0, T]}\left|X_{s}\right|\big{\Vert}_{p}\right]\nonumber\\
&\hspace{0.5cm}+2Lt\kappa\sqrt{ h}+\sqrt{2}C_{d,p}^{BDG}2\sqrt{2}L\sqrt{t}\kappa\sqrt{ h}+2L\int_{0}^{t}f(s)ds+\sqrt{2}C_{d,p}^{BDG}4L\left[\int_{0}^{t}f(s)^{2}ds\right]^{\frac{1}{2}}.\nonumber\\
&\leq  h^{\frac{1}{2}\land\rho}\psi(T)+2L\int_{0}^{t}f(s)ds+\sqrt{2}C_{d,p}^{BDG}4L\left[\int_{0}^{t}f(s)^{2}ds\right]^{\frac{1}{2}},\nonumber
\end{align}
where 
\begin{align}
\psi(T)=&\,T^{\rho-\rho\land\frac{1}{2}}\left[T\tilde{L}\big(1+2\tilde{\kappa}(T, \vertii{X_{0}}_{p})\big)+\sqrt{2}\tilde{L}C_{d, p}^{BDG}\big(\sqrt{2T}+2\sqrt{T}\tilde{\kappa}(T, \vertii{X_{0}}_{p})\big)\right]\nonumber\\
&+T^{\frac{1}{2}-\rho\land\frac{1}{2}}\left[2LT\kappa+4C_{d, p}^{BDG}L \sqrt{T}\kappa\right]. \nonumber
\end{align}
Then it follows from lemma~\ref{Gronwall} that $f(t)\leq 2e^{(4L+16C_{d, p}^{BDG^{2}}L^{2})T}\cdot\psi(T) h^{\rho\land\frac{1}{2}}$. Then we can conclude the proof by letting  
$\tilde{C}= 2e^{(4L+16C_{d, p}^{BDG^{2}}L^{2})T}\cdot\psi(T)$. 
\end{proof}

The proof of Proposition~\ref{cvgeuler}-$(b)$ directly derives the following result.
\begin{cor}\label{coleulercontinuous}
Let $\bar{X}\coloneqq(\bar{X}_{t})_{t\in[0, T]}$ denote the process defined by the continuous time Euler scheme~(\ref{defeulercontinuousx}) with step $h=\frac{T}{M}$ and let $X\coloneqq(X_{t})_{t\in[0, T]}$ denote the unique solution of the McKean-Vlasov equation $~(\ref{defconvx})$. Then under Assumption~\ref{AssumptionI}, one has 
\begin{equation}
\mathbb{W}_{p}(\bar{X}, X)\leq \vertii{\sup_{t\in[0, T]}\left|X_{t}-\bar{X}_{t}\right|}_{p}\leq \tilde{C} h^{\frac{1}{2}\land \rho}, 
\end{equation}
where $\tilde{C}$ is the same as in Proposition~\ref{cvgeuler}-$(a)$.
\end{cor}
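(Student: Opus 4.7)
The plan is to observe that Corollary~\ref{coleulercontinuous} is essentially a direct restatement of Proposition~\ref{cvgeuler}-$(b)$ enhanced by a trivial coupling argument for the Wasserstein distance on path space. Nothing new has to be proved about the convergence rate itself; the content is simply the passage from the strong (pathwise $L^p$) error bound to a Wasserstein distance bound between the laws of the two processes in $\mathcal{P}_p\big(\mathcal{C}([0,T],\mathbb{R}^d)\big)$.

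First I would establish the rightmost inequality. Since Assumption~\ref{AssumptionI} is in force and $\bar{X}$ denotes the continuous-time Euler scheme with step $h=T/M$, Proposition~\ref{cvgeuler}-$(b)$ applies verbatim and yields
\[
\Big\Vert \sup_{t\in[0,T]} |X_t - \bar{X}_t|\Big\Vert_{p} \leq \tilde{C}\, h^{\frac{1}{2}\land \rho},
\]
with $\tilde{C}$ the very same constant produced in the proof of Proposition~\ref{cvgeuler}-$(b)$.

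For the leftmost inequality, the key observation is that $X$ and $\bar{X}$ are both defined on the same filtered probability space $(\Omega,\mathcal{F},(\mathcal{F}_t),\mathbb{P})$ and driven by the same Brownian motion $(B_t)$. Consequently the map $\omega \mapsto (\bar{X}(\omega), X(\omega))$ pushes $\mathbb{P}$ forward to a probability measure $\pi$ on $\mathcal{C}([0,T],\mathbb{R}^d)\times \mathcal{C}([0,T],\mathbb{R}^d)$ whose marginals are $\bar{\mu} = \mathbb{P}\circ \bar{X}^{-1}$ and $\mu = \mathbb{P}\circ X^{-1}$. Both marginals belong to $\mathcal{P}_p\big(\mathcal{C}([0,T],\mathbb{R}^d)\big)$ thanks to the moment bound of Proposition~\ref{cvgeuler}-$(a)$, so $\pi \in \Pi(\bar{\mu}, \mu)$ is an admissible coupling. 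Using this specific coupling in the infimum defining $\mathbb{W}_p$ in~\eqref{defwasC} gives
\[
\mathbb{W}_p(\bar{X}, X)^p \;\leq\; \int_{\mathcal{C}\times \mathcal{C}} \Vert\alpha-\beta\Vert_{\sup}^p \, \pi(d\alpha,d\beta) \;=\; \mathbb{E}\Big[\sup_{t\in[0,T]}|\bar{X}_t - X_t|^p\Big],
\]
and taking $p$-th roots produces the desired middle quantity.

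There is no genuine obstacle here: both inequalities are one-line observations once Proposition~\ref{cvgeuler}-$(b)$ is in hand. The only thing worth stating carefully is the admissibility of the natural coupling, which in turn requires the $L^p$-integrability (in $\Vert\cdot\Vert_{\sup}$) of $X$ and $\bar{X}$ granted by Proposition~\ref{cvgeuler}-$(a)$, ensuring that the marginal laws really lie in $\mathcal{P}_p\big(\mathcal{C}([0,T],\mathbb{R}^d)\big)$ so that $\mathbb{W}_p$ is well defined.
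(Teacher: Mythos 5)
Your proposal is correct and matches the paper's (implicit) argument: the paper simply notes that the corollary ``directly derives'' from Proposition~\ref{cvgeuler}-$(b)$, the right inequality being that proposition verbatim and the left one being exactly the coupling bound you spell out, with the synchronous coupling $(\bar X, X)$ admissible thanks to the moment bounds of Proposition~\ref{cvgeuler}-$(a)$. No gap; your write-up just makes explicit what the paper leaves as a one-line remark.
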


\smallskip

\section{Proof of Proposition~\ref{pp}}\label{proofpp}

\begin{lem}\label{lem1}Let $\mu$, $\nu\!  \in {\cal P}_1(\RD)$. We have $\mu \conright \nu$ if and only if the application $t\in[0, 1]\mapsto (1-t)\mu +t \nu$ is non-decreasing w.r.t. the convex order.

\end{lem}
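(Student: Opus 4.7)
The plan is to observe that this lemma reduces to a one-line linearity computation in both directions, with the only subtlety being a reminder that it is enough to test the convex order against convex functions of linear growth (Lemma~\ref{onlylineargrowth}), which guarantees integrability of the test functions against every $\mathcal{P}_1$-probability measure involved.

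For the nontrivial implication, assume $\mu \conright \nu$ and fix $0 \le s \le t \le 1$. First I would note that $(1-u)\mu + u\nu \in \mathcal{P}_1(\RD)$ for every $u\in[0,1]$ by linearity of the first moment, so all integrals below are well defined. Then, for any convex $\varphi:\RD\to\RR$ with linear growth, I would write the elementary identity
\[
\int_{\RD}\varphi\, d\bigl((1-t)\mu+t\nu\bigr)-\int_{\RD}\varphi\, d\bigl((1-s)\mu+s\nu\bigr) = (t-s)\left(\int_{\RD}\varphi\, d\nu-\int_{\RD}\varphi\, d\mu\right)\ge 0,
\]
where the inequality uses $t-s\ge 0$ together with $\mu\conright\nu$. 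By Lemma~\ref{onlylineargrowth} this suffices to conclude $(1-s)\mu+s\nu\conright(1-t)\mu+t\nu$, i.e.\ the map is non-decreasing for convex ordering.

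For the converse implication I would simply specialize the monotonicity of $u\mapsto(1-u)\mu+u\nu$ to the endpoints $u=0$ and $u=1$, which immediately yields $\mu\conright\nu$.

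There is no real obstacle: the whole content is the linearity of integration together with the reduction to linear-growth test functions. I would present the argument as a single short paragraph in each direction, explicitly invoking Lemma~\ref{onlylineargrowth} to keep the integrability issue transparent.
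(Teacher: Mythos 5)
Your proof is correct, and it rests on the same elementary observation as the paper's: linearity of integration against convex test functions of linear growth (with Lemma~\ref{onlylineargrowth} handling integrability), the converse being immediate from the endpoints $t=0,1$. The only difference is in the organization of the direct implication: you compare $(1-s)\mu+s\nu$ and $(1-t)\mu+t\nu$ in one stroke via the identity $\big[(1-t)\mu+t\nu\big]-\big[(1-s)\mu+s\nu\big]=(t-s)(\nu-\mu)$, so that $\int\varphi\,d\big((1-t)\mu+t\nu\big)-\int\varphi\,d\big((1-s)\mu+s\nu\big)=(t-s)\big(\int\varphi\,d\nu-\int\varphi\,d\mu\big)\ge 0$; the paper instead first establishes the sandwich $\mu\conright(1-t)\mu+t\nu\conright\nu$ and then rewrites $(1-s)\mu+s\nu=(1-\tfrac st)\mu+\tfrac st\big((1-t)\mu+t\nu\big)$ to reduce the comparison of two interior points to that sandwich (which implicitly requires $t>0$, a harmless restriction). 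Your direct computation is marginally cleaner and avoids the reparametrization; the paper's version has the minor advantage of displaying explicitly the intermediate fact $\mu\conright(1-\varepsilon)\mu+\varepsilon\nu$, which is the form in which the lemma is invoked in the proof of Proposition~\ref{pp}$(a)$ — but that fact also follows from your statement by taking $s=0$, so nothing is lost.
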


\begin{proof}[Proof of Lemma~\ref{lem1}]
 Let $f:\RD\to \RR$ be convex function with linear growth and let $t\!\in [0,1]$. Then if $\mu \conright \nu$, 
\[
\int_{\RD}f(\xi)\mu(d\xi) \le (1-t)\int_{\RD}f(\xi)\mu(d\xi) +t\int_{\RD}f(\xi)\nu(d\xi)\le \int_{\RD}f(\xi)\nu(d\xi).
\]
Then one can conclude that $\mu \conright (1-t) \mu + t \nu \conright \nu$ for every $t\in[0, 1]$. 
Now let $0\leq s <t$. It follows from what precedes that 
\[(1-s)\mu +s \nu  = (1-\tfrac st)\mu + \tfrac st \big( (1-t) \mu +t \nu\big) \conright (1-t)\mu +t \nu.\]
The proof for the converse direction is trivial.
\end{proof}

\begin{proof}[Proof of Proposition~\ref{pp}]
 $(a)$ The direct sense follows from Lemma~\ref{lem1} since $\mu \le_{cv}(1-\varepsilon)\mu +\varepsilon \nu$ for $\varepsilon \!\in [0,1]$.

\noindent For the converse, we proceed as follows. Let $\mu \le_{cv} \nu$. , note that $\varepsilon \mapsto \Phi\big(\mu+\varepsilon(\nu-\mu)\big)$ is continuous since $\varepsilon \mapsto \mu+\varepsilon(\nu-\mu)$ from  $[0,1]$ to $\big({\cal P}_2(\RD), {\cal W}_2\big)$ is continuous. Consequently $\widetilde \Phi(\varepsilon):= 
\Phi\big(\mu+\varepsilon(\nu-\mu)\big)-\Phi(\mu) -\varepsilon \big(\Phi(\nu)-\Phi(\mu)\big)$ is also continuous and satisfies $\widetilde \Phi(0)= \widetilde \Phi(1)= 0$. Hence,  $\widetilde \Phi$ attains its maximum at some $\varepsilon_0\!\in [0,1)$. This in turn implies that 
\begin{align*}
0\ge  \liminf_{\varepsilon \to \varepsilon_0+}& \frac{\widetilde \Phi(\varepsilon)-\widetilde \Phi(\varepsilon_0)}{\varepsilon-\varepsilon_0}\\
& =  \liminf_{\eta \to 0+}\frac{\Phi(\mu+\varepsilon_0(\nu-\mu)+\eta(\nu-\mu))-\Phi(\mu+\varepsilon_0(\nu-\mu))}{\eta}-  \big(\Phi(\nu)-\Phi(\mu)\big).
\end{align*}
Then set $\widetilde \mu = \mu +\varepsilon_0(\nu-\mu)$ and $\widetilde \nu = \nu$. One has $\widetilde \mu\le_{cv} \widetilde \nu$ owing to Lemma~\ref{lem1} and note that $\widetilde \nu - \widetilde \mu=(1-\varepsilon_0)(\nu-\mu)$. Consequently, $ \mu +\varepsilon_0( \nu- \mu)+\eta (\nu-\mu) =\widetilde{\mu} +\frac{\eta}{1-\varepsilon_0}(\widetilde \nu-\widetilde \mu)$. Applying the assumption made on $\Phi$ to $\widetilde \mu$ and $\widetilde \nu$ implies 
$$
 \liminf_{\eta \to 0+}\frac{\Phi(\mu+\varepsilon_0(\nu-\mu)+\eta(\nu-\mu))-\Phi(\mu+\varepsilon_0(\nu-\mu))}{\eta}\ge 0.
$$ 
Finally, this yields $\Phi(\nu)\ge \Phi(\mu)$.

\noindent \textcolor{black}{$(b)$ }
Let  $\mu$, $\nu\!\in {\cal P}_2(\RD)$ such that $\mu\preceq_{cv} \nu$. Then, for every $\ve\!\in [0,1)$, $\mu \preceq_{cv} (1-\ve)\mu +\ve \nu$ so that, as $\Phi$ is linearly functionally differentiable, 
 \begin{align*}
\frac{\Phi( (1-\ve)\mu +\ve \nu)-\Phi(\mu)}{\ve} & = \int_0^1 \int_{\RD} \frac{\delta \Phi}{\delta m}\big((1-t)\mu+t((1-\ve)\mu +\ve\nu)\big)(x) d[\nu-\mu](x)dt\\
& =  \int_0^1 \int_{\RD} \frac{\delta \Phi}{\delta m}\big((1-\ve t)\mu+\ve t\nu\big)(x)  d[\nu-\mu](x)dt.
\end{align*}
The function  $u\mapsto (1-u)\mu+u\nu$, $u\!\in [0,1]$, being ${\cal W}_2$-continuous, it follows from the definition of the linear functional derivative, that
$$ 
\frac{\delta \Phi}{\delta m}\big((1-u)\mu+u\nu\big)(x)\to  \frac{\delta \Phi}{\delta m}(\mu )(x)
\quad \mbox{as}\quad u\to 0\quad\mbox{ for every }\quad x\!\in \RD.
$$
 Moreover, ${\cal K}= \{(1-u)\mu+u\nu, \, u\!\in [0,1]\}$ being  clearly ${\cal W}_2$-bounded, one has, still by this definition,
\[
\forall\, x\!\in \RD, \quad\sup_{u\in [0,1]} \left |\frac{\delta \Phi}{\delta m}((1-u)\mu+u\nu)(x)  \right|\le C_{{\cal K}}(1+|x|^2)
\]
for some real constant $C_{{\cal K}}$. Consequently, Lebesgue's dominated convergence applied with both $\mu\otimes dt$ and $\nu \otimes dt$ implies
\[
\int_0^1dt  \int_{\RD}  \frac{\delta \Phi}{\delta m}((1-t\ve)\mu+t\ve\nu)(x) d[\nu-\mu](x) \to  \int_{\RD} \frac{\delta \Phi}{\delta m}(\mu)(x) d[\nu-\mu](x) \quad\mbox{as}\quad \ve \to 0.
\]
This shows that  the function $\varepsilon \in[0, 1]\mapsto \Phi \big(\mu +\varepsilon (\nu-\mu)\big)$ is  right differentiable at $\ve =0$ with 
\[
\frac{d}{d\varepsilon}[\Phi \big(\mu +\varepsilon (\nu-\mu)\big)]_{|\ve =0}= \int_{\RD} \frac{\delta \Phi}{\delta m}(\mu)(x) d[\nu-\mu](x).
\]
Then the equivalence between $(i)$ and $(ii)$ is a direct consequence  of the characterization $(a)$.

\smallskip
\noindent $(c)$ This claim is a straightforward consequence of $(b)$.
\end{proof}

\section{Proof of Lemma~\ref{revisiedjensen} and Lemma~\ref{marglemma}}\label{appB}

The proof of Lemma~\ref{revisiedjensen} relies on the following lemma. 

\begin{lem}\label{orderZ} $($see~\cite[
Lemma 3.2]{jourdain2019convex} and~\cite{fadili2019convex} $)$
Let $Z\sim \mathcal{N}(0, \mathbf{I}_{q})$. If $u_{1}, u_{2}\in\mathbb{M}_{d\times q}$ with $u_{1}\preceq u_{2}$, then $u_{1}Z\preceq_{\,cv}u_{2}Z$.
\end{lem}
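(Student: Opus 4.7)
The plan is to realize an explicit martingale coupling between the centered Gaussian distributions $\mathcal{N}(0, u_1 u_1^{\top})$ and $\mathcal{N}(0, u_2 u_2^{\top})$ and then conclude by conditional Jensen's inequality, thereby instantiating the Strassen-type characterization recalled in the introduction. Concretely, I would work directly with the covariance matrices of $u_1 Z$ and $u_2 Z$: since $u_i Z \sim \mathcal{N}(0, u_i u_i^{\top})$, the hypothesis $u_1 \preceq u_2$ says exactly that the covariance difference $u_2 u_2^{\top} - u_1 u_1^{\top}$ is positive semi-definite.

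The first step is therefore to factorize this PSD difference: there exists a matrix $A \!\in \mathbb{M}_{d\times d}$ (for instance, the symmetric square root of $u_2 u_2^{\top} - u_1 u_1^{\top}$) such that $AA^{\top} = u_2 u_2^{\top} - u_1 u_1^{\top}$. On a possibly enlarged probability space, I would then introduce a standard Gaussian vector $W \sim \mathcal{N}(0, \mathbf{I}_{d})$ independent of $Z$, and define
\[
\tilde U := u_1 Z, \qquad \tilde V := u_1 Z + A W.
\]
Since both vectors are centered Gaussians, they are characterized by their covariances: a direct computation gives $\mathrm{Cov}(\tilde U) = u_1 u_1^{\top}$ and $\mathrm{Cov}(\tilde V) = u_1 u_1^{\top} + AA^{\top} = u_2 u_2^{\top}$, so $\tilde U \stackrel{d}{=} u_1 Z$ and $\tilde V \stackrel{d}{=} u_2 Z$.

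The second step is to notice that $(\tilde U, \tilde V)$ is a martingale coupling: because $W$ is centered and independent of $\tilde U$,
\[
\mathbb{E}\bigl[\tilde V \,\big|\, \tilde U\bigr] = \tilde U + A\,\mathbb{E}[W] = \tilde U.
\]
For any convex $\varphi:\mathbb{R}^{d}\to\mathbb{R}$ such that $\mathbb{E}\,\varphi(u_2 Z)$ is well defined in $(-\infty,+\infty]$, conditional Jensen yields
\[
\mathbb{E}\,\varphi(u_1 Z) = \mathbb{E}\,\varphi(\tilde U) = \mathbb{E}\,\varphi\bigl(\mathbb{E}[\tilde V \mid \tilde U]\bigr) \le \mathbb{E}\bigl[\mathbb{E}[\varphi(\tilde V)\mid \tilde U]\bigr] = \mathbb{E}\,\varphi(\tilde V) = \mathbb{E}\,\varphi(u_2 Z),
\]
which is exactly $u_1 Z \preceq_{\,cv} u_2 Z$. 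I do not foresee a genuine obstacle: the argument is a textbook Strassen-style construction, and the only point requiring (mild) care is the existence of the factorization $AA^{\top} = u_2 u_2^{\top} - u_1 u_1^{\top}$, which is guaranteed by the PSD assumption. Note also that the standard symmetrization $A \mapsto A^{\top}$ is harmless since only $AA^{\top}$ enters the computation.
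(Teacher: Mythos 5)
Your proposal is correct and is essentially the paper's own argument: the paper likewise defines $M_{1}=u_{1}Z$ and $M_{2}=M_{1}+\sqrt{u_{2}u_{2}^{*}-u_{1}u_{1}^{*}}\,\widetilde{Z}$ with $\widetilde{Z}\sim\mathcal{N}(0,\mathbf{I}_{d})$ independent of $Z$, identifies the law of $M_{2}$ with that of $u_{2}Z$ via covariances, and concludes by conditional Jensen. The only cosmetic difference is that you condition on $\tilde U=u_{1}Z$ while the paper conditions on $Z$, which is immaterial.
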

\begin{proof}[Proof of Lemma~\ref{orderZ}](\footnote{This proof is reproduced from \cite{jourdain2019convex} for convenience.})
We define $M_{1}\coloneqq u_{1}Z$ and $M_{2}\coloneqq M_{1}+\sqrt{u_{2}u_{2}^{*}-u_{1}u_{1}^{*}}\,\widetilde{Z}$, where $\sqrt{A}$ denotes the square root of a positive semi-definite matrix $A$ and $\widetilde{Z}\sim \mathcal{N}(0, \mathbf{I}_{d})$,  $\widetilde{Z}$ is independent to $Z$.   Hence the probability distribution of $M_{2}$ is $\mathcal{N}(0, u_{2}u_{2}^{*})$, which is the  distribution of $u_{2}Z$. 

For any convex function $\varphi$, we have, owing to the conditional Jensen inequality, \begin{align}
\EE \big[\varphi(M_{2})\big] &=\EE \big[\varphi\big(M_{1}+\sqrt{u_{2}u_{2}^{*}-u_{1}u_{1}^{*}}\cdot\widetilde{Z}\big)\big] \nonumber\\
&=\EE \Big[ \;\EE \big[\varphi\big(M_{1}+\sqrt{u_{2}u_{2}^{*}-u_{1}u_{1}^{*}}\cdot\widetilde{Z}\big)\mid Z\big] \;\Big]\nonumber\\
&\geq \EE\Big[ \varphi\big(\EE \big[M_{1}+\sqrt{u_{2}u_{2}^{*}-u_{1}u_{1}^{*}}\cdot\widetilde{Z}\mid Z\big]\big)\;\Big]\nonumber\\
&=\EE \Big[ \varphi\big(M_{1}+\mathbb{E}\big[\sqrt{u_{2}u_{2}^{*}-u_{1}u_{1}^{*}}\cdot\widetilde{Z}\big]\big)\Big]=\EE \varphi(M_{1}). 
\end{align}
Hence, $u_{1}Z\preceq_{\,cv}u_{2}Z$ owing to the equivalence of convex order of the random variable and its probability distribution.
\end{proof}
\begin{proof}[Proof of Lemma~\ref{revisiedjensen}]
\noindent $(i)$ Let $(Q_{m}^{\,\mu}\varphi)(\cdot, \cdot)\coloneqq(Q_{m}\varphi )(\cdot, \mu, \cdot)$ to simplify the notation. For every $(x_{1}, u_{1}), (x_{2}, u_{2})\!\in \RD\times\MDQ$ and $\lambda\in[0, 1]$,
\begin{align}
(Q_{m}^{\,\mu}\varphi)&\big(\lambda (x_{1}, u_{1})+(1-\lambda) (x_{2}, u_{2})\big)\nonumber\\
&=\EE \Big[\varphi \Big( b_{m-1}\big(\lambda_1 x_1 + (1-\lambda)x_2, \mu\big) +\big(\lambda u_{1}+(1-\lambda) u_{2}\big)Z_{m}\Big)\Big] \nonumber\\
&=\EE \Big[\varphi\Big(\lambda b_{m-1}(x_1, \mu)+(1-\lambda)b_{m-1}(x_2, \mu)+\lambda u_{1}Z_{m}+(1-\lambda)u_{2}Z_{m}\Big)\Big] \nonumber\\
&\quad\;\text{(as $b_m$ is affine in $x$, see Assumption~\ref{IIprime}-(5))}\nonumber\\
&\le\lambda \EE\big[\varphi \big(b_{m-1}(x_1, \mu)+u_{1}Z_{m}\big)\big]+ (1-\lambda)\EE \big[\varphi\big(b_{m-1}(x_1, \mu)+u_{2}Z_{m}\big)\big]\nonumber\\
&\hspace{0.5cm} \text{(by the convexity of $\varphi$ and the linearity of the expectation)}\nonumber\\
&=\lambda (Q_{m}^{\,\mu}\varphi)(x_{1}, u_{1}) + (1-\lambda) (Q_{m}^{\,\mu}\varphi)(x_{2}, u_{2})\nonumber. 
\end{align}
Hence, $(Q_{m}\varphi )(\cdot, \mu, \cdot)$ is convex. 

\noindent$(ii)$ If we fix $(x, \mu)\in\RD\times\mathcal{P}_{1}(\RD)$, then for any $u\in\MDQ$,
\begin{align}
(Q_m\varphi) (x, \mu, u) &= \mathbb{E}\Big[\varphi\big(b_{m-1}(x, \mu)+uZ_{m}\big)\Big]\nonumber\\
&\geq \varphi\Big(\mathbb{E}\big[b_{m-1}(x, \mu)+uZ_{m}\big]\Big)\nonumber\\
&=\varphi\big(b_{m-1}(x, \mu)+ \mathbf{0}_{d\times1}\big)=(Q_m\varphi)(x, \mu, \mathbf{0}_{d\times q}). \nonumber
\end{align}

\noindent$(iii)$ For every fixed $(x, \mu)\in\RD\times\mathcal{P}_{1}(\RD)$, it is obvious that $\varphi\big(b_{m-1}(x, \mu)+\;\cdot\;\big)$ 
is also a convex function. Thus, Lemma~\ref{orderZ} directly implies that if $u_{1}\preceq u_{2}$, then \[\EE\varphi\big(b_{m-1}(x, \mu)+u_{1}Z_m)\leq \EE \varphi\big(b_{m-1}(x, \mu)+u_{2}Z_m),\] which is equivalent to $Q_m\varphi(x, \mu, u_{1})\leq Q_m\varphi(x, \mu, u_{2})$. 
\end{proof}

\begin{proof}[Proof of Lemma~\ref{marglemma}]
Let $x, y \in\mathbb{R}^{d}$ and $\lambda\in[0, 1]$. For every $m=0, \ldots, M-1$, we have
\begin{align}
\EE &\Big[\varphi\Big(b_{m}\big(\lambda x+(1-\lambda)y, \mu\big)+\sigma_{m}\big(\lambda x+(1-\lambda)y, \mu\big) Z_{m+1}\Big)\Big]\nonumber\\
&\leq \EE \Big[\varphi\Big(\lambda\, b_m(x, \mu)+(1-\lambda)b_m(y, \mu)+ \lambda\,\sigma_{m}(x, \mu)Z_{m+1}+(1-\lambda)\,\sigma_{m}(y,\mu)Z_{m+1}\Big)\Big]\nonumber\\
&\hspace{0.5cm}\text{(by Assumption~(\ref{conm1}), (\ref{assmbm}) and Lemma~\ref{revisiedjensen})}  \nonumber\\
&\leq \lambda\,\EE \Big[\varphi\big(b_{m}(x, \mu)+\sigma_{m}(x, \mu)Z_{m+1}\big)\Big]+(1-\lambda)\,\EE \Big[\varphi\big(b_{m}(y, \mu)+\sigma_{m}(y, \mu)Z_{m+1}\big)\Big]\nonumber\\
& \hspace{0.5cm}\text{(by the convexity of $\varphi$ and the linearity of the expectation)}. \nonumber
\end{align}
The function $x\mapsto \mathbb{E}\Big[\varphi\big(b_{m}(x, \mu)+\sigma_{m}(x, \mu)  Z_{m+1}\big)\Big]$ 
obviously has a linear growth since Assumption~\ref{AssumptionI} implies that $b_m$ and $\sigma_{m}$ have a linear growth (see  (\ref{lineargrowth})).
\end{proof}

\section{Proof of Lemma~\ref{injectionmeasure}}\label{appc}
\begin{proof}[Proof of Lemma~\ref{injectionmeasure}]
For any $\mu\in\PPC$, there exists $X: (\Omega, \mathcal{F}, \mathbb{P})\rightarrow\CRD$ such that $\PP_{X}=\mu$ and $\mathbb{E}\left\Vert X\right\Vert_{\sup}^{p}<+\infty$ so that $\sup_{t\in[0, T]}\mathbb{E}\left|X_{t}\right|^{p}<+\infty$. Hence, for any $t\in[0, T]$, we have $\mu_{t}\in\PPRD$. 

For a fixed $t\in[0,T]$, choose $(t_{n})_{n\in\mathbb{N}^{*}}\in[0,T]^{\mathbb{N}^{*}}$ such that $t_{n}\rightarrow t$. Then, for $\PP$-almost any $\omega\in\Omega$, $X_{t_{n}}(\omega)\rightarrow X_{t}(\omega)$ since  $X(\omega)$ has $\PP$-a.s. continuous paths. Moreover, 
\[\sup_{n}\left\Vert X_{t_{n}}\right\Vert_{p}\vee\left\Vert X_{t}\right\Vert_{p}\leq\left\Vert\sup_{0\leq s\leq T}\left| X_{s}\right|\right\Vert_{p}<+\infty,\]
Hence, $\left\Vert X_{t_{n}}-X_{t}\right\Vert_{p}\rightarrow 0$ owing to the dominated convergence theorem, which implies that  $\mathcal{W}_{p}(\mu_{t_{n}}, \mu_{t})\rightarrow0$ as $n\rightarrow+\infty$. Hence, $t\mapsto\mu_{t}$ is a continuous application i.e. $\iota(\mu)=(\mu_{t})_{t\in[0, T]}\in\mathcal{C}\big([0,T],\mathcal{P}_{p}(\mathbb{R}^{d})\big).$
\end{proof}




\end{appendices}

\end{document}